\theoremstyle{definition}
\newtheorem{definition}{Definition}[section]
\newtheorem{theorem}[definition]{Theorem}
\newtheorem*{theorem*}{Conjecture}
\newtheorem{lemma}[definition]{Lemma}
\newtheorem{corollary}[definition]{Corollary}
\theoremstyle{remark}
\newtheorem{remark}[definition]{Remark}
\newtheorem{example}[definition]{Example}
\newcounter{enumctr}
\providecommand{\keywords}[1]{\textbf{\textbf{Key words: }} #1}
\begin{document}
\title {Asymptotic behaviour of solutions to non-commensurate fractional-order planar systems}         % Ten bai
\author{Kai Diethelm\thanks{Faculty of Applied Natural Sciences and Humanities (FANG), 
University of Applied Sciences W\"urzburg-Schweinfurt, Ignaz-Sch\"on-Str.\ 11, 97421 Schweinfurt, Germany, 
\tt kai.diethelm@fhws.de}
\and
Ha Duc Thai\footnote{Institute of Mathematics, Vietnam Academy of Science and Technology, 
18 Hoang Quoc Viet, 10307 Ha Noi, Vietnam, \tt hdthai@math.ac.vn}
\and
Hoang The Tuan\footnote{Institute of Mathematics, Vietnam Academy of Science and Technology, 
18 Hoang Quoc Viet, 10307 Ha Noi, Vietnam, \tt httuan@math.ac.vn}
}
\date{}
\maketitle

\begin{abstract}
This paper is devoted to studying non-commensurate fractional order planar systems. Our contributions are to derive sufficient conditions for the global attractivity of non-trivial solutions to fractional-order inhomogeneous linear planar systems and for the Mittag-Leffler stability of an equilibrium point to fractional order nonlinear planar systems. To achieve these goals, our approach is as follows. Firstly, based on Cauchy's argument principle in complex analysis, we obtain various explicit sufficient conditions for the asymptotic stability of linear systems whose coefficient matrices are constant. Secondly, by using Hankel type contours, we derive some important estimates of special functions arising from a variation of constants formula of solutions to inhomogeneous linear systems. Then, by proposing new weighted norms combined with the Banach fixed point theorem for appropriate Banach spaces, we get the desired conclusions. Finally, numerical examples are provided to illustrate the effect of the main theoretical results.
\end{abstract}

{\it 2020 Mathematics Subject Classification:} {\small 34A08, 35A01, 35B20, 35B40, 60H15, 35R60}

\keywords{non-commensurate fractional order planar systems; asymptotic behavior of solutions; global attractivity; Mittag-Leffler stability}

\section{Introduction}
Fractional calculus and fractional order differential equations are research topics that have generated a great amount of interest in recent years. For details on their various applications in in science and engineering, we refer the interested reader to the collections \cite{Baleanu_1, Baleanu_2, Petras, Tarasov_1, Tarasov_2} and the references therein.

To our knowledge, the first contribution in the qualitative study of the fractional order autonomous linear systems was published by Matignon \cite{Matignon}. In that paper, using Laplace transform and the final value theorem, the author has obtained an algebraic criterion to ensure the attractiveness of solutions. The BIBO (bounded input, bounded output) stability for non-commensurate fractional order systems, i.e.\ for systems whose differential equations are not all of the same order, was investigated by Bonnet and Partington \cite{Bonnet}, and their result shows that the systems are stable if and only if their transfer function has no pole in the closed right hand side of the complex plane. 

Starting from \cite{Bonnet}, a new difficult task appears: finding the conditions to ensure that the poles of the characteristic polynomial of the system lie on the open left side of the complex plane. Trigeassou et al.\ \cite{Trige} have proposed a method based on Nyquist’s theorem. In particular, they have derived Routh-like stability conditions for fractional order systems involving at most two fractional derivations. Unfortunately, for higher numbers of differential operators, this approach seems to be unsuitable by its numerical implementation. After that, Sabatier et al.\ \cite{Sabatier} have introduced another realization of the fractional system. This realization is recursively defined and involves nested closed-loops. Based on this realization, they have obtained a recursive algorithm that involves, at each step, Cauchy’s argument principle on a frequency range and removes the numerical limitation mentioned in \cite{Trige} above.

In addition to the algorithmic approach as in \cite{Sabatier}, a number of analytic approaches have been used to investigate the zeros of characteristic polynomials of systems of fractional order systems. In \cite{Ivanova}, the stability and resonance conditions are established for fractional systems of second order in terms of a pseudo-damping factor and a fractional differentiation order. The method in \cite{Ivanova} has been successfully extended in \cite{Chen} for a wide class of second kind non-commensurate elementary systems. By the substitution method, a variation of constants formula and the properties of the Mittag-Leffler function in the stable domain, in \cite{Tuan2017}, the authors have shown the asymptotic stability for fractional order systems with (block) triangular coefficient matrices. By combining a variation of constants formula, properties of Mittag-Leffler functions, a special weighted norm type and Banach's fixed-point theorem, Tuan and Trinh \cite{TH2020} have proved the global attractivity and asymptotic stability for a class of mixed-order linear fractional systems when the coefficient matrices are strictly diagonally dominant and the elements on the main diagonal of these matrices are negative. Using the positivity of the system and developing a novel comparison principle, Shen and Lam \cite{Shen16} have considered the stability and performance analysis of positive mixed fractional order linear systems with bounded delays. Tuan et al.\ \cite{THL21} have established a necessary and sufficient condition for the asymptotic stability of positive mixed fractional-order linear systems with bounded or unbounded time-varying delays.

Although there have been some articles on mixed fractional order systems as listed above, in our view, the qualitative theory of non-commensurate fractional order systems is still a challenging topic whose development is in its infancy. Even in the simplest case when the coefficient matrix is constant, the current results seem to be far away from a complete characterization of the stability of these systems. In particular, the entire theory for non commensurate systems is far less well developed than the corresponding theory for commensurate systems (i.e.\ systems all of whose associated differential equations are of the same order) that have been extensively discussed, e.g., in the papers mentioned above or in \cite{Cong2016,Cong2020} and the references cited therein.

For these reasons, we study in this paper the fractional-order planar system with Caputo fractional derivatives
\begin{align}\label{intro}
	^CD^{\alpha}_{0^+}x(t)&=Ax(t)+f(t,x(t)), \quad t>0,\\ 
 	x(0)&= x^0 \in \mathbb R^2, 
\end{align}
where $\alpha = (\alpha_1,\alpha_2) \in (0,1]^2$ is a multi-index, $A\in\mathbb{R}^{2\times 2}$ is a square matrix and $f :[0, \infty) \times \mathbb R^2 \rightarrow \mathbb R^2$ is vector valued continuous function. It is worth noting that for the case $f=0$, in \cite{Kaslik2021}, by constructing a smooth parameter curve and using Rouch\'e’s theorem, Brandibur and Kaslik have provided criteria for the asymptotic stability and for the instability of solutions, respectively. However, these conditions are not explicit and are quite difficult to verify. Motivated by \cite{Kaslik2021}, our aim is as follows. First, we want to give sufficient simple and clear conditions that can guarantee the Mittag-Leffler stability of the system \eqref{intro} in the homogeneous case. Then, by establishing a variation of constants formula, estimates for general Mittag-Leffler type functions, and proposing new weighted norms, we show the asymptotic behavior of the system when the vector field $f$ is inhomogeneous or represents small nonlinear noise around its equilibrium point.  

The paper is organized as follows. Section \ref{sec:preliminaries} contains a brief summary of existence and uniqueness results for solutions to multi-order fractional differential systems and a variation of constants formula for solutions to fractional order inhomogeneous linear planar systems. Section \ref{ss} deals with some properties of the characteristic function to a general fractional order homogeneous linear planar system whose coefficient matrix is constant. Section \ref{sec:specfun} is devoted to studying important estimates for special functions arising from the variation of constants formula for the solutions. Our main contributions are presented in Section \ref{sec:mainresults} where we show the asymptotic behaviour of solutions to fractional-order linear planar systems and the Mittag-Leffler stability of an equilibrium point to fractional nonlinear planar systems. Numerical examples are provided in Section \ref{sec:examples} to illustrate the main theoretical results.

To conclude the introduction, we present some notations that will be used throughout the rest of the paper. In $\mathbb R^2$, we define the norm $\|\cdot\|$ by $\|x\| := \max\{|x_1|,|x_2|\}$ for every $x\in\mathbb R^2$.
For any $r>0$, the closed ball of radius $r$ centered at the origin $0$ in $\mathbb{R}^2$ is given by $B(0,r):=\{x\in\mathbb R^2:\|x\|\leq r\}$. The space of all continuous functions $\xi:[0,\infty)\rightarrow \mathbb R^2$ is denoted by $C([0,\infty);\mathbb{R}^2)$. For any $\xi\in C([0,\infty);\mathbb{R}^2)$, let $\|\xi\|_\infty:=\sup_{t\geq 0}\|\xi(t)\|$. Then, we use the notation $ C_\infty([0,\infty);\mathbb{R}^2) := \{\xi\in C([0,\infty);\mathbb{R}^2): \|\xi\|_\infty<\infty\}$ to designate the subspace of $C([0,\infty);\mathbb{R}^2)$ that comprises the bounded continuous functions on $[0, \infty)$.

For $\alpha \in (0,1)$ and $J = [0, T]$ or $J = [0, \infty)$, we define the Riemann-Liouville fractional integral of a function $f :J \rightarrow \mathbb R$ as 
$$ I^\alpha_{0^+}f(t) := \frac{1}{\Gamma(\alpha)}\int_{0}^{t}(t-s)^{\alpha -1}f(s)ds,\;t\in J,$$
and the Caputo fractional derivative of the order $\alpha\in (0,1)$ of a function $f : J \rightarrow \mathbb R$ as 
$$ ^C D^\alpha_{0^+}f(t) := \frac{d}{dt}I^{1-\alpha}_{0^+}(f(t)-f(0)), \;t\in J \setminus \{ 0 \},$$
where $\Gamma(\cdot)$ is the Gamma function and $\frac{d}{dt}$ is the usual derivative.
Letting $\alpha = (\alpha_1,\alpha_2) \in (0,1]\times (0,1]$ be a multi-index and $f = (f_1, f_2)$ with $f_i : J \rightarrow \mathbb R$, $i=1,2$, be a vector valued function, we write
$$ ^C D^\alpha_{0^+} f(t) := \left(^C D^{\alpha_1}_{0^+}f_1(t),^C D^{\alpha_2}_{0^+}f_2(t) \right).$$
See, e.g., \cite[Chapter III]{Kai} and \cite{Vainikko_16} for more details on the Caputo fractional derivative.

\section{Preliminaries}
\label{sec:preliminaries}

\subsection{Existence and uniqueness of global solutions and exponential boundedness of solutions}

Consider the two-component incommensurate fractional-order initial value problem with Caputo fractional derivatives
\begin{subequations}
\label{Bt}
\begin{align}
	 ^C D^{\alpha}_{0^+}x(t) &= f(t,x(t)), \quad t>0,\\ 
	 x(0) &= x^0 \in \mathbb R^2, 
\end{align}
\end{subequations}
where $\alpha = (\alpha_1,\alpha_2) \in (0,1]^2$ is a multi-index and $f :[0, \infty) \times \mathbb R^2 \rightarrow \mathbb R^2$ is 
a continuous function.

\begin{theorem}[Existence and uniqueness of global solutions]\label{DLTTDN}
	Suppose that the function $f:[0, \infty) \times \mathbb R^2 \rightarrow \mathbb R^2$ is continuous and that, 
	for some constant $L > 0$, it satisfies the Lipschitz condition
	$$ \|f(t,x) - f(t,\hat x)\| \leq L||x - \hat x||,\; \forall t \in [0.\infty),\; x ,\hat x \in \mathbb R^2$$
	with respect to its second variable. Then, for any initial value $x^0 \in \mathbb R^2$, 
	the two-component incommensurate fractional-order system \eqref{Bt} has 
	a unique global solution $\varphi(\cdot,x^0)$ on the interval $[0,\infty)$. 
\end{theorem}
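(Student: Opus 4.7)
The plan is to convert \eqref{Bt} into an equivalent Volterra integral equation and apply the Banach fixed point theorem with a Bielecki-type weighted supremum norm on each bounded interval $[0,T]$, then patch the resulting local solutions together by uniqueness.

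By the standard equivalence between Caputo fractional initial value problems and Volterra integral equations (see \cite{Kai}), I would start by observing that $\varphi=(\varphi_1,\varphi_2)\in C([0,T];\mathbb{R}^2)$ solves \eqref{Bt} on $[0,T]$ if and only if
$$\varphi_i(t) = x^0_i + \frac{1}{\Gamma(\alpha_i)}\int_0^t (t-s)^{\alpha_i-1} f_i(s,\varphi(s))\,ds,\qquad i=1,2,\; t\in[0,T].$$
For fixed $T>0$ and a parameter $\omega>0$ to be chosen, I would equip $C([0,T];\mathbb{R}^2)$ with the Bielecki norm $\|\xi\|_{\omega,T} := \sup_{t\in[0,T]} e^{-\omega t}\|\xi(t)\|$, which satisfies $e^{-\omega T}\|\xi\|_{\infty,T} \le \|\xi\|_{\omega,T} \le \|\xi\|_{\infty,T}$ and is therefore equivalent to the usual sup norm, so the space remains complete. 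The operator $\mathcal{T}$ defined by $(\mathcal{T}\xi)_i(t) := x^0_i + \Gamma(\alpha_i)^{-1}\int_0^t(t-s)^{\alpha_i-1}f_i(s,\xi(s))\,ds$ maps $C([0,T];\mathbb{R}^2)$ into itself by continuity of $f$ and the standard regularity of Riemann-Liouville integrals of continuous functions.

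The contraction estimate is the core step. Applying the Lipschitz hypothesis componentwise and the substitution $u=t-s$ gives
$$e^{-\omega t}\bigl\|(\mathcal{T}\xi-\mathcal{T}\eta)_i(t)\bigr\| \le \frac{L}{\Gamma(\alpha_i)}\|\xi-\eta\|_{\omega,T}\int_0^t u^{\alpha_i-1}e^{-\omega u}\,du \le \frac{L}{\omega^{\alpha_i}}\|\xi-\eta\|_{\omega,T},$$
and taking the maximum over the two components yields
$$\|\mathcal{T}\xi-\mathcal{T}\eta\|_{\omega,T} \le L\,\max\bigl(\omega^{-\alpha_1},\omega^{-\alpha_2}\bigr)\,\|\xi-\eta\|_{\omega,T}.$$
Choosing $\omega > \max\bigl(1,L^{1/\alpha_1},L^{1/\alpha_2}\bigr)$ makes this factor strictly less than $1$, so $\mathcal{T}$ is a contraction and admits a unique fixed point $\varphi^T$ solving the integral equation on $[0,T]$. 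To pass to $[0,\infty)$, I would run the argument for a sequence $T_n\to\infty$; uniqueness on each $[0,T_n]$ forces $\varphi^{T_{n+1}}|_{[0,T_n]} = \varphi^{T_n}$, so the pieces glue into a single $\varphi\in C([0,\infty);\mathbb{R}^2)$ that solves \eqref{Bt} globally, and any two such global solutions agree on every $[0,T]$. The argument has no genuine obstacle; the only point requiring care is the multi-index bookkeeping, since the two components contribute distinct exponents $\omega^{-\alpha_1}$ and $\omega^{-\alpha_2}$, and $\omega$ must be chosen large enough to tame both simultaneously.
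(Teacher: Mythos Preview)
Your argument is correct. The Bielecki-norm contraction estimate is carried out cleanly, the choice of $\omega$ handles both exponents at once, and the gluing step via uniqueness on nested intervals is the standard way to pass from $[0,T]$ to $[0,\infty)$.

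The paper itself does not prove this statement at all: it simply cites \cite[Theorem~2.2 and Remark~2.3]{Tuan2020}, a result for the more general setting of time-delay multi-order fractional systems. Your proposal therefore supplies a self-contained proof where the paper outsources one. The weighted-norm fixed-point argument you use is precisely the technique underlying the cited result (specialised here to the delay-free planar case), so while the presentation differs---you give the details, the paper gives a pointer---the mathematical content is essentially the same.
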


\begin{proof} 
	See \cite[Theorem 2.2 and Remark 2.3]{Tuan2020}.
\end{proof}

\begin{theorem}[Exponential boundedness of global solutions]\label{DLTBCM}
	Suppose that the function $f$ satisfies the assumptions of Theorem \ref{DLTTDN}.  
	Moreover, let there exist a constant $\gamma > 0$ such that 
	$$ \sup_{t\geq 0} e^{-\gamma t} \int_0^t(t-s)^{\alpha_i -1}\|f(s,0)\|ds < \infty. $$
	Then, for any initial value $x^0 \in \mathbb R^2,$ the two-component incommensurate fractional-order system
	 \eqref{Bt} has a unique global solution $\varphi(\cdot,x^0) \in C\left ( [0,\infty),\mathbb R^2 \right )$ and 
	$$ \|\varphi(t,x^0)\| \leq Me^{\gamma t},\; \forall t \geq 0, $$ 
	where $M$ is some positive constant which depends on $x^0.$
\end{theorem}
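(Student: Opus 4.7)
Existence and uniqueness of the global solution $\varphi(\cdot, x^0)\in C([0,\infty),\mathbb R^2)$ already come for free from Theorem \ref{DLTTDN}, so the entire task is to establish the exponential bound $\|\varphi(t,x^0)\|\le M e^{\gamma t}$. My plan is to apply the Banach fixed point theorem on a suitable weighted Banach space of exponentially bounded continuous functions, and then identify the resulting fixed point with the global solution produced by Theorem \ref{DLTTDN}.

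First I would convert \eqref{Bt} into the equivalent componentwise Volterra integral system
\begin{equation*}
\varphi_i(t,x^0) = x^0_i + \frac{1}{\Gamma(\alpha_i)}\int_0^t (t-s)^{\alpha_i-1} f_i(s,\varphi(s,x^0))\,ds, \quad i=1,2,
\end{equation*}
and, for $\gamma>0$, introduce the weighted space
\begin{equation*}
C_\gamma([0,\infty);\mathbb R^2) := \Big\{ \xi \in C([0,\infty);\mathbb R^2) : \|\xi\|_\gamma := \sup_{t\ge 0} e^{-\gamma t}\|\xi(t)\| < \infty \Big\},
\end{equation*}
which is a Banach space. On it I define the operator $T$ whose components are the right-hand sides above.

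Next I would check that, for $\gamma$ sufficiently large, $T$ maps $C_\gamma$ into itself and is a contraction. Both facts rest on the elementary estimate
\begin{equation*}
e^{-\gamma t}\int_0^t (t-s)^{\alpha_i-1}\|\xi(s)\|\,ds \;\le\; \|\xi\|_\gamma \int_0^\infty u^{\alpha_i-1} e^{-\gamma u}\,du \;=\; \|\xi\|_\gamma \,\frac{\Gamma(\alpha_i)}{\gamma^{\alpha_i}}.
\end{equation*}
Combining this with the Lipschitz splitting $\|f(s,\xi(s))\|\le \|f(s,0)\|+L\|\xi(s)\|$ and the standing hypothesis $\sup_{t\ge 0} e^{-\gamma t}\int_0^t(t-s)^{\alpha_i-1}\|f(s,0)\|\,ds<\infty$ shows that $T\xi \in C_\gamma$; the same estimate gives the contraction bound $\|T\xi-T\eta\|_\gamma \le L \max_i \gamma^{-\alpha_i}\,\|\xi-\eta\|_\gamma$, which is strictly less than $1$ once $\gamma$ is chosen large enough.

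Finally, Banach's fixed point theorem yields a unique $\varphi^* \in C_\gamma$ with $T\varphi^* = \varphi^*$. Since $\varphi^*$ then solves \eqref{Bt} on $[0,\infty)$ and Theorem \ref{DLTTDN} guarantees that such a solution is unique, we must have $\varphi^*=\varphi(\cdot,x^0)$, and the weighted bound $\|\varphi^*\|_\gamma<\infty$ translates into the desired estimate with $M := \|\varphi^*\|_\gamma$. The only delicate point is the implicit enlargement of $\gamma$: the contraction argument needs $\gamma$ large, whereas the hypothesis only posits the existence of some $\gamma_0>0$. This is painless because if the hypothesis holds for $\gamma_0$ it holds a fortiori for every $\gamma\ge\gamma_0$, so one replaces the given $\gamma$ by $\max(\gamma_0,\gamma^*)$ where $\gamma^*$ is chosen to make $L\max_i\gamma^{-\alpha_i}<1$. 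I anticipate this book-keeping about which $\gamma$ is used where to be the main subtlety; the rest is a routine weighted-norm contraction argument.
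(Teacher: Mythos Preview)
Your argument is correct and self-contained. The paper does not actually prove this theorem; it merely cites \cite[Theorem~2.4]{Tuan2020}. Your weighted-norm (Bielecki-type) contraction argument is the standard way to establish such results and is almost certainly what the cited reference does as well.

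One point deserves a sharper statement. You correctly observe that the contraction requires $L\max_i\gamma^{-\alpha_i}<1$, which may force you to replace the $\gamma$ from the hypothesis by some larger $\gamma'\ge\gamma$. Your proof then yields $\|\varphi(t,x^0)\|\le M e^{\gamma' t}$, not $M e^{\gamma t}$ for the \emph{original} $\gamma$. So strictly speaking you have shown exponential boundedness with a rate that may exceed the one named in the hypothesis. This is entirely adequate for how the theorem is used in the paper (only to justify taking Laplace transforms in Section~\ref{sec:preliminaries}), and it is likely all that the cited theorem asserts anyway; just be explicit that the constant $\gamma$ in the conclusion is the enlarged one, so the reader is not left to guess whether the same symbol refers to the same number.
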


\begin{proof} 
	See \cite[Theorem 2.4]{Tuan2020}. 
\end{proof} 

%\section{Variation of constants formula}
%\noindent In this section we will build a variation of constants formula for solutions to two-component incommensurate fractional-order system \eqref{Bt}. 
% \\

\subsection{The variation of constants formula for the solutions}

Consider the non-homogeneous two-component incommensurate fractional-order linear system
\begin{subequations}
\label{eq:lin-ivp}
\begin{align}\label{Bt1}
 	^CD^{\alpha}_{0^+}x(t)=Ax(t) + f(t) ,t>0 
\end{align}
with initial condition
\begin{align}\label{DKD}
	 x(0) = x^0 \in \mathbb R^2, 
\end{align}
\end{subequations}
where $\alpha = (\alpha_1, \alpha_2) \in (0,1]^2$, 
$A = \left ( a_{ij} \right ) \in \mathbb R^{2 \times 2}$ is a square real matrix and $f=(f_1,f_2)^{\rm T} :[0,\infty) \rightarrow \mathbb R^2$ is a continuous function such that
\begin{equation} 
	\|f(t)\| \leq Me^{\gamma t},\;\forall t \geq 0
\end{equation}
for some $M > 0$ and some $\gamma > 0$. Then, we have
\begin{align*}
	\int_0^t(t-s)^{\alpha_i -1}|f_i(s)|ds  &\leq M \int_0^t(t-s)^{\alpha_i -1}e^{\gamma s}ds\\ 
	 &= -\frac{M e^{\gamma t}}{\gamma^{\alpha_i}}  
	 		\int_0^t\left (\gamma(t-s)  \right )^{\alpha_i -1}e^{-\gamma (t-s)}d(\gamma(t-s))\\ 
	 &= \frac{M e^{\gamma t}}{\gamma^{\alpha_i}}\int_0^{\gamma t} \tau ^{\alpha_i -1}e^{-\tau}d\tau\\ 
	 & \leq \frac{M \Gamma(\alpha_i)}{\gamma ^{\alpha_i}} e^{\gamma t}.
\end{align*} 
Due to Theorems \ref{DLTTDN} and \ref{DLTBCM}, for any initial condition
$x^0 \in \mathbb R^2$, the system \eqref{eq:lin-ivp} has a unique exponentially bounded solution in $C\left ( [0, \infty), \mathbb R^2 \right )$. 
Taking Laplace transform on both sides of the system \eqref{eq:lin-ivp}, we obtain the  algebraic system
\begin{align}
	 \begin{cases}
	(s^{\alpha_1} -a_{11})X_1(s) - a_{12}X_2(s) &= s^{\alpha_1 -1 }x_1^0 +F_1(s)  \\ 
	 -a_{21}X_1(s) +(s^{\alpha_2} -a_{22})X_2(s)& =  s^{\alpha_2 -1 }x_2^0 +F_2(s)
	\end{cases} , 
\end{align}
where $X_i(s)$ and $F_i(s)$, $i=1,2$, are the Laplace transforms of $x_i(t)$ and $f_i(t)$, respectively.
%, and 
%$$ s^{\alpha_i} = \begin{cases}
% 0& \text{ if } s =  0\\ 
% |s|^{\alpha_i}(\cos(\alpha_i \varphi) + i \sin(\alpha_i\varphi))& \text{ if } s = |s|e^{i\varphi} , -\pi <\varphi \leq \pi
%\end{cases}, i = 1, 2 $$
By Cramer's rule, we see that
\begin{align}
	 X_1(s) &= \frac{ x_1^0(s^{\alpha_1 + \alpha_2-1} - a_{22}s^{\alpha_1 -1})+ x^0_2a_{12}s^{\alpha_2-1}
	 + F_1(s)(s^{\alpha_2}-a_{22}) + a_{12}F_2(s) }{Q(s)}\nonumber \\ 
	 &= \frac{s^{\alpha_1 + \alpha_2} - a_{22}s^{\alpha_1}}{sQ(s)}x^0_1 + \frac{s^{\alpha_2}}{sQ(s)}x^0_2 + 
	\frac{s^{\alpha_2}-a_{22}}{Q(s)}F_1(s ) + \frac{a_{12}F_2(s)}{Q(s)},
\end{align}
and
\begin{align}
	 X_2(s) &= \frac{x_2^0(s^{\alpha_1 + \alpha_2-1} - a_{11}s^{\alpha_2 -1})+ x^0_1a_{21}s^{\alpha_1-1} + a_{12}F_1(s) 
	 + F_2(s)(s^{\alpha_1}-a_{11})}{Q(s)}\nonumber \\ 
	 &= \frac{s^{\alpha_1 + \alpha_2} - a_{11}s^{\alpha_2 }}{sQ(s)}x_2^0 + \frac{a_{21}s^{\alpha_1}}{sQ(s)}x_1^0 + 
	\frac{a_{21}F_1(s)}{Q(s)} + \frac{s^{\alpha_1}-a_{11}}{Q(s)}F_2(s),
\end{align}
where $Q(s) := s^{\alpha_1 + \alpha_2} - a_{11}s^{\alpha_2}-a_{22}s^{\alpha_1} +\det A. $
Put 
\begin{subequations}
\label{eq:def-r-s}
\begin{align}
	\mathcal R^\lambda(t) &= \mathcal L^{-1}\left \{\frac{s^{l(\alpha)-\lambda}}{sQ(s)}  \right \}(t) , 
		\quad \lambda \in \left \{ 0,\alpha_1, \alpha_2 \right \}, \\ 
	\mathcal S^\beta(t) &= \mathcal L^{-1}\left \{\frac{s^{l(\alpha)-\beta}}{Q(s)}  \right \}(t) , 
		\quad \beta \in \left \{ \alpha_1, \alpha_2 ,l(\alpha)\right \}
\end{align}
\end{subequations}
with $l(\alpha): = \alpha_1 + \alpha_2.$ Then, with each $i \in \left \{ 1,2 \right \}$, we obtain
\begin{align*}
\mathcal L^{-1}\left \{ \frac{s^{l(\alpha)-\beta}}{Q(s)}F_i(s) \right \}(t) &= \mathcal L^{-1}\left \{ \mathcal L\left \{ \mathcal S^\beta \right \}(s) \mathcal L\left \{ f_i \right \}(s)\right \}(t)\\ 
 &= \mathcal L^{-1} \left\{ \mathcal L\left \{ \mathcal S^\beta \ast f_i \right \}(s)  \right \}(t) \\
&=\mathcal S^\beta \ast f_i(t), \beta \in \left \{ \alpha_1, \alpha_2, l(\alpha) \right\}, 
\end{align*}
where "$\ast$" is the Laplace convolution operator.

From the arguments above, the unique solution to the initial value problem \eqref{eq:lin-ivp} has the following form.

\begin{lemma}\label{BTHS1} 
	On the interval $[0, \infty)$, the non-homogeneous linear two-component incommensurate fractional-order system \eqref{eq:lin-ivp} 
	has the unique solution 
	\[
		\varphi(\cdot,x^0) = \begin{pmatrix}
						\varphi_1(\cdot,x^0)\\
						\varphi_2(\cdot,x^0)
						\end{pmatrix}
	\]
	with
	\begin{align}
		\varphi_1(t,x^0) &=  \left (\mathcal R^0(t)-a_{22}\mathcal R^{\alpha_2}(t)  \right )x_1^0 
						+ a_{12}\mathcal R^{\alpha_1}(t)x^0_2 \nonumber \\
					& \hspace{1cm}+ \left( ( \mathcal S^{\alpha_1} - a_{22}\mathcal S^{l(\alpha)} )*f_1  \right )(t) 
								+ a_{12}\left (\mathcal S^{l(\alpha)}*f_2  \right )(t), \\
		\varphi_2(t,x^0) &=  a_{21}\mathcal R^{\alpha_2}(t)x^0_1 + \left (\mathcal R^0(t)
						- a_{11}\mathcal R^{\alpha_1}(t)  \right )x_2^0 \nonumber \\
					&\hspace{1cm}+a_{21}\left (\mathcal S^{l(\alpha)}*f_1  \right )(t) +
							\left (( \mathcal S^{\alpha_2}- a_{11}\mathcal S^{l(\alpha)})*f_2  \right )(t). 
	\end{align}
\end{lemma}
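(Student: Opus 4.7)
The plan is to confirm the formula by extracting it from the Cramer's rule expressions for $X_1(s)$ and $X_2(s)$ already displayed above, and then inverting the Laplace transform term by term.

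First, uniqueness of a global solution in $C([0,\infty);\mathbb{R}^2)$ is guaranteed by Theorem \ref{DLTTDN}, since the map $(t,x)\mapsto Ax+f(t)$ is globally Lipschitz in $x$ with Lipschitz constant $\|A\|$. The exponential bound on $f$ together with the integral estimate established just before the lemma certifies that the Laplace transforms of both sides of \eqref{Bt1} converge absolutely in some right half-plane $\mathrm{Re}(s)>\gamma_0$, so the algebraic $2\times 2$ system in $X_1(s)$ and $X_2(s)$ and the resulting Cramer's rule formulas are valid there.

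The second step is to rewrite each rational coefficient appearing in the Cramer's rule expressions as an $\mathbb{R}$-linear combination of Laplace transforms of the functions $\mathcal{R}^\lambda$ and $\mathcal{S}^\beta$ defined in \eqref{eq:def-r-s}. For instance, in the expression for $X_1(s)$ one writes
\[
\frac{s^{\alpha_1+\alpha_2}-a_{22}s^{\alpha_1}}{sQ(s)} = \mathcal{L}\{\mathcal{R}^0\}(s)-a_{22}\,\mathcal{L}\{\mathcal{R}^{\alpha_2}\}(s),
\]
while the coefficient of $F_1(s)$ becomes $\mathcal{L}\{\mathcal{S}^{\alpha_1}\}(s)-a_{22}\mathcal{L}\{\mathcal{S}^{l(\alpha)}\}(s)$. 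The remaining coefficients in $X_1(s)$ and the analogous ones in $X_2(s)$ are treated in the same fashion; this matching is just algebra and uses only the definitions \eqref{eq:def-r-s}.

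Finally, linearity of $\mathcal{L}^{-1}$ together with the convolution identity $\mathcal{L}^{-1}\{\mathcal{L}\{\mathcal{S}^\beta\}(s)F_i(s)\}=\mathcal{S}^\beta *f_i$, already verified in the display immediately preceding the lemma, yields the announced formulas for $\varphi_1(t,x^0)$ and $\varphi_2(t,x^0)$. The only technical subtlety I foresee is justifying that the $\mathcal{R}^\lambda$ and $\mathcal{S}^\beta$ really are well-defined locally integrable functions of exponential type, so that the convolutions and inverse transforms make sense and the convolution theorem applies; this point is handled via the Bromwich integral representation with contour in a right half-plane avoiding the (finitely many) zeros of $Q$, and it is in any case developed in detail in the later section on special function estimates.
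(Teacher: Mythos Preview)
Your proposal is correct and follows exactly the paper's own argument: the paper does not give a separate proof environment for this lemma but instead derives the formula in the paragraphs immediately preceding it (Laplace transform, Cramer's rule, the definitions \eqref{eq:def-r-s}, and the convolution identity), concluding with ``From the arguments above\ldots''. Your write-up is a faithful summary of that derivation, and your remark about the well-definedness of $\mathcal R^\lambda$ and $\mathcal S^\beta$ is appropriate and indeed deferred to Section~\ref{sec:specfun} in the paper.
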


\section{Some properties of the characteristic function}\label{ss}

In this paper, we only focus on incommensurate systems, i.e.\ on systems
of the form \eqref{intro} with $\alpha_1 \ne \alpha_2$, because the case $\alpha_1 = \alpha_2$
has already been discussed in detail elsewhere \cite{Matignon}. 
Thus, without loss of generality, we assume $0 <\alpha_1 < \alpha_2\leq 1$. 
Our first auxiliary statement in this context deals with functions of the form
\begin{equation}
	\label{eq:def-Q}
	Q(s) = s^{\alpha_1 + \alpha_2} - as^{\alpha_2}- bs^{\alpha_1} +c;
\end{equation}
the characteristic functions of the problems under consideration will be of precisely this structure.

\begin{lemma}\label{bd3.1}
	Let $0 <\alpha_1 < \alpha_2\leq 1$ and $a,b,c \in \mathbb R$. Then, the following
	statements hold for the function $Q$ defined in \eqref{eq:def-Q}.
	\begin{itemize}
	\item [(i)] If $ c < 0$ then $Q$ has at least one positive real zero. 
	\item [(ii)] If $s \in \mathbb C$ is a zero of $Q$ then its complex conjugate is also a zero of $Q$.
	\item [(iii)] Let $0 < \omega < \pi$. Then, $Q$ has only a finite number of zeros in the set 
		$\mathcal C = \{z \in \mathbb C : | \arg{(z)}| \leq \omega\}$. 
	\item [(iv)] If $c > 0$, then $s = i\omega$ with $\omega > 0$ is a zero of $Q$ if and only if 
		\begin{align}\label{eq} &\begin{cases}
			 & a= \rho_2 \omega^{\alpha_1} -c\rho_1 \omega^{-\alpha_2}, \\ 
			 &   b = c\rho_2 \omega^{-\alpha_1} -\rho_1 \omega^{\alpha_2},
			\end{cases}
		\end{align}
		where 
		\begin{equation}
			\label{eq:def-rho}
			\rho_1 = \frac{\sin\frac{\alpha_1 \pi}{2}}{\sin\frac{(\alpha_2 - \alpha_1)\pi}{2}} , \hspace{2cm}
			\rho_2 = \frac{\sin\frac{\alpha_2 \pi}{2}}{\sin\frac{(\alpha_2 - \alpha_1)\pi}{2}}.
		\end{equation}
	\end{itemize}
\end{lemma}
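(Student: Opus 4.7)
The plan is to treat the four parts separately, with (i)--(iii) essentially direct from elementary analysis/analyticity and (iv) a careful but routine trigonometric computation.

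For part (i), I would simply note that $Q$ restricted to $(0,\infty)$ is continuous, that $Q(0^+)=c<0$ by hypothesis, and that $s^{\alpha_1+\alpha_2}$ is the dominant term as $s\to+\infty$, so $Q(s)\to+\infty$. The intermediate value theorem then produces a positive real zero.

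For part (ii), I would fix the principal branch of the power functions, so that $Q$ is holomorphic on $\mathbb{C}\setminus(-\infty,0]$. On this cut plane, $\overline{s^{\alpha_j}}=\bar s^{\alpha_j}$, and since the coefficients $a,b,c$ are real, $\overline{Q(s)}=Q(\bar s)$. Hence $Q(s)=0$ implies $Q(\bar s)=0$.

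For part (iii), I would exploit the same holomorphicity. Since $0<\omega<\pi$, the sector $\mathcal{C}$ lies inside the cut plane where $Q$ is holomorphic and not identically zero, so its zeros are isolated. To rule out accumulation at infinity, the reverse triangle inequality gives
\begin{equation*}
|Q(s)|\;\geq\;|s|^{\alpha_1+\alpha_2}-|a|\,|s|^{\alpha_2}-|b|\,|s|^{\alpha_1}-|c|,
\end{equation*}
which tends to $+\infty$, so all zeros satisfy $|s|\leq R$ for some $R>0$. The zero set of $Q$ inside $\mathcal{C}$ is therefore a discrete subset of the compact set $\mathcal{C}\cap\overline{B(0,R)}$, hence finite.

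For part (iv), the approach is to substitute $s=i\omega$, use $s^{\alpha_j}=\omega^{\alpha_j}(\cos(\alpha_j\pi/2)+i\sin(\alpha_j\pi/2))$, and separate $Q(i\omega)=0$ into its real and imaginary parts. This yields the linear system
\begin{align*}
\omega^{\alpha_2}\cos\tfrac{\alpha_2\pi}{2}\,a+\omega^{\alpha_1}\cos\tfrac{\alpha_1\pi}{2}\,b &= \omega^{\alpha_1+\alpha_2}\cos\tfrac{(\alpha_1+\alpha_2)\pi}{2}+c,\\
\omega^{\alpha_2}\sin\tfrac{\alpha_2\pi}{2}\,a+\omega^{\alpha_1}\sin\tfrac{\alpha_1\pi}{2}\,b &= \omega^{\alpha_1+\alpha_2}\sin\tfrac{(\alpha_1+\alpha_2)\pi}{2}.
\end{align*}
Its determinant equals $-\omega^{\alpha_1+\alpha_2}\sin\frac{(\alpha_2-\alpha_1)\pi}{2}$, which is nonzero since $0<\alpha_2-\alpha_1<1$, so there is a unique solution $(a,b)$ and both implications collapse into this solvability. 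Applying Cramer's rule and simplifying the numerators by the sine subtraction identities $\sin X\cos Y-\cos X\sin Y=\sin(X-Y)$ with the choices $X=(\alpha_1+\alpha_2)\pi/2$ and $Y=\alpha_j\pi/2$ collapses them to $\sin(\alpha_2\pi/2)$ and $\sin(\alpha_1\pi/2)$, producing exactly the formulas \eqref{eq} with the weights $\rho_1,\rho_2$ defined in \eqref{eq:def-rho}.

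The only real obstacle is the bookkeeping in part (iv): keeping track of signs when inverting the $2\times 2$ system and matching the trigonometric simplifications against the stated formulas; everything else is immediate. The hypothesis $c>0$ is not used to derive the equations themselves but secures a genuine constraint, since otherwise the $a$ and $b$ obtained from purely imaginary $s$ might degenerate.
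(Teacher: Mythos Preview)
Your outline for parts (i), (ii) and (iv) is correct and matches the paper's approach; in fact the paper simply declares (i) and (ii) obvious and for (iv) cites \cite{Kaslik2018} rather than writing out the Cramer computation, so your treatment of (iv) is more complete than the paper's own.

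There is, however, a small but genuine gap in your argument for (iii). You assert that the zero set lies in the ``compact set $\mathcal{C}\cap\overline{B(0,R)}$'' and is discrete there, hence finite. The problem is the origin: $Q$ is holomorphic only on the slit plane $\mathbb{C}\setminus(-\infty,0]$, so the isolated-zeros principle does not by itself prevent zeros from accumulating at $s=0$, which sits on the boundary of your sector (and of the domain of holomorphy). Equivalently, either $\mathcal{C}\cap\overline{B(0,R)}$ is not compact (if $0\notin\mathcal{C}$), or it is compact but $Q$ is not analytic at one of its points (if $0\in\mathcal{C}$); in either reading the conclusion ``discrete in a compact set $\Rightarrow$ finite'' is not yet justified.

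The paper closes exactly this gap by a case distinction. If $c\neq 0$ then $Q(0)=c\neq 0$, and continuity of $Q$ at $0$ gives a punctured ball $\{0<|z|<\varepsilon\}$ free of zeros, so all zeros in $\mathcal{C}$ lie in the genuinely compact annular sector $\{\varepsilon\le|z|\le R,\ |\arg z|\le\omega\}$, where your argument goes through. If $c=0$ one factors $Q(s)=s^{\alpha_1}\bigl(s^{\alpha_2}-a s^{\alpha_2-\alpha_1}-b\bigr)$ and repeats the argument for the second factor, whose value at $0$ is $-b$. Adding one sentence covering this point would make your proof of (iii) complete.
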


\begin{proof} 
	(i) and (ii) are obvious.
	
	(iii) First, we assume that $c\neq 0$. 
	Then $Q(0) \neq 0$. Due to the continuity of $Q$ at $0$, we can find $\varepsilon$ which is small enough such that 
	$Q$ has no zero in $\left\{z\in \mathbb C: |z| < \varepsilon \right\}.$  Moreover,
	because $|Q(s)| \geq |s|^{\alpha_1 + \alpha_2} - |a| \cdot |s|^{\alpha_2}-|b| \cdot|s|^{\alpha_1} -|c|,$ 
	we have that $\lim_{|s| \to \infty} |Q(s)| = \infty$ uniformly for all $\arg s$. This implies that there is a positive real number $R$ 
	such that $Q$ has no zero in the domain $\left\{z\in\mathbb C: |z| > R \right\}.$ Hence, all zeros of $Q$ in 
	$\{z\in\mathbb C: | \arg{(z)}| \leq \omega\}$ (if they exist) belong to the set 
	$\Omega := \left \{z\in\mathbb C: \varepsilon\leq |z| \leq R, |\arg{(z)}|\leq \omega\right \}.$ 
	Notice that $\Omega$ is a compact set and $Q$ is analytic on this domain. If now $Q$ has infinitely
	many zeros in $\Omega$ then, because of the compactness of $\Omega$, the set of zeros has a cluster point.
	This implies, in view of the analyticity of $Q$, that $Q(s) = 0$ for all $s$ which contradicts the definition of $Q$.
	Hence, $Q$ has only a finite number of zeros in $\Omega$. This shows that $Q$ has only a finite number of zeros in the 
	domain $\mathcal{C}$ if $c \ne 0$. \\
	To deal with the case $c=0$, we write 
	\begin{align*}
		Q(s) &= s^{\alpha_1}\left (s^{\alpha_2} - as^{\alpha_2-\alpha_1}-b   \right ) \\ 
		 &=s^{\alpha_1}P(s), 
	\end{align*}
	where $P(s) = s^{\alpha_2} - as^{\alpha_2-\alpha_1}-b.$
	By repeating the above arguments for $P$, the proof is complete.
	
	(iv) See \cite[Proposition 1, Part 3b]{Kaslik2018}. 
\end{proof}

\begin{corollary}\label{hq1} Assume that $a,b,c > 0$ and that one of conditions
	\begin{itemize}
	\item [(i)] $c(\rho^2_2 - \rho_1^2) < ab < c(\rho_2^2 + \rho_1^2)$, 
	\item [(ii)] $ab \leq c\left ( \rho_2 -\rho_1 \right )^2$,
	\end{itemize}
	is satisfied where $\rho_1,\rho_2$ are defined in \eqref{eq:def-rho}.
	Then, the function $Q$ defined in \eqref{eq:def-Q}
	has no purely imaginary zero.
\end{corollary}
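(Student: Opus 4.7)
The plan is to reduce the non-existence of purely imaginary zeros to inconsistency of the algebraic pair appearing in Lemma~\ref{bd3.1}(iv): by that lemma, $Q$ has a zero on the positive imaginary axis iff both equations in \eqref{eq} hold for some $\omega > 0$, and the case $\omega < 0$ reduces to this by the conjugate symmetry in Lemma~\ref{bd3.1}(ii). So the task is to derive a contradiction from the existence of such an $\omega$. First, multiplying the two equations in \eqref{eq} and introducing the auxiliary variable $u := \omega^{\alpha_1+\alpha_2} > 0$, a routine expansion yields
\[
ab \;=\; c(\rho_1^2+\rho_2^2) - \rho_1\rho_2\bigl(u + c^2/u\bigr),
\]
equivalently the quadratic
\[
\rho_1\rho_2\, u^2 + \bigl[ab - c(\rho_1^2+\rho_2^2)\bigr]\, u + c^2\rho_1\rho_2 \;=\; 0, \qquad (\star)
\]
whose discriminant factors as $\Delta = \bigl[ab - c(\rho_2-\rho_1)^2\bigr]\bigl[ab - c(\rho_2+\rho_1)^2\bigr]$. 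Thus any purely imaginary zero forces $(\star)$ to have a positive real root.

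For case (i), I would verify directly from the hypothesis $c(\rho_2^2-\rho_1^2) < ab < c(\rho_2^2+\rho_1^2)$ that the two factors of $\Delta$ have opposite signs. Since $\rho_1 > 0$, one has $c(\rho_2^2-\rho_1^2) = c(\rho_2-\rho_1)(\rho_2+\rho_1) > c(\rho_2-\rho_1)^2$, so the left hypothesis makes the first factor of $\Delta$ strictly positive; symmetrically $(\rho_2+\rho_1)^2 - (\rho_2^2+\rho_1^2) = 2\rho_1\rho_2 > 0$, so the right hypothesis makes the second factor strictly negative. Hence $\Delta < 0$, $(\star)$ admits no real root, and the reduction is contradicted.

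For case (ii), I would invoke the AM--GM inequality $u + c^2/u \geq 2c$, with equality iff $u = c$. Plugging this into the identity for $ab$ above yields that any purely imaginary zero forces $ab \leq c(\rho_2-\rho_1)^2$, with equality iff $u = c$; this extremal case in turn pins the parameters down through \eqref{eq} to the unique pair $(a_0,b_0) = \bigl((\rho_2-\rho_1)c^{\alpha_1/(\alpha_1+\alpha_2)},\,(\rho_2-\rho_1)c^{\alpha_2/(\alpha_1+\alpha_2)}\bigr)$. I would then close the argument by invoking one of the individual equations in \eqref{eq} (not just their product), together with the constraint $u = pq$ where $p = \omega^{\alpha_1}$ and $q = \omega^{\alpha_2}$, to show that the hypothesis places $(a,b)$ off the parametric locus where a zero can occur.

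The main obstacle lies in case (ii), particularly at the threshold $ab = c(\rho_2-\rho_1)^2$, since the product equation $(\star)$ by itself is not enough to exclude a zero: the AM--GM bound is sharp there. The delicate step is to bring in one of the two individual equations of \eqref{eq} rather than their product, and to translate the resulting one-parameter curve $(a(\omega),b(\omega))$ of admissible coefficients into a condition expressible solely via $\rho_1,\rho_2$ and the hypothesis $ab \leq c(\rho_2-\rho_1)^2$.
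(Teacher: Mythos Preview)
Your treatment of case (i) is correct and mirrors the paper's argument: both eliminate $\omega$ to obtain a quadratic whose discriminant factors as $[ab-c(\rho_2-\rho_1)^2]\,[ab-c(\rho_2+\rho_1)^2]$, and both deduce $\Delta<0$ from the hypothesis. The only cosmetic difference is that you multiply the two equations of \eqref{eq} to get a quadratic in $u=\omega^{\alpha_1+\alpha_2}$, whereas the paper substitutes the first equation into the second to get a quadratic in $X=\omega^{\alpha_2}$; the discriminants coincide.

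For case (ii), your approach has a genuine gap that cannot be filled. Your AM--GM step shows only that a purely imaginary zero forces $ab\le c(\rho_2-\rho_1)^2$; this is \emph{compatible} with the hypothesis, not in contradiction with it, as you yourself note. Your plan to ``close the argument'' by returning to an individual equation of \eqref{eq} will not succeed, because case (ii) of the corollary is false as stated. Concretely, take $\alpha_1=\tfrac12$, $\alpha_2=1$, so that $\rho_1=1$, $\rho_2=\sqrt2$; set $c=1$ and $\omega=1$. Then Lemma~\ref{bd3.1}(iv) gives $a=b=\sqrt2-1>0$, and one checks directly that $Q(i)=0$. Here $ab=(\sqrt2-1)^2=c(\rho_2-\rho_1)^2$, so hypothesis (ii) holds with equality, yet $Q$ has the purely imaginary zero $s=i$. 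Perturbing $\omega$ slightly around $1$ produces nearby pairs $(a,b)$ with $a,b>0$ and $ab<c(\rho_2-\rho_1)^2$ strictly, so the strict case fails too.

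For comparison, the paper's own proof of case (ii) argues that the two real roots of its quadratic $a\rho_1 X^2+[ab-c(\rho_2^2-\rho_1^2)]X+bc\rho_1=0$ are negative. But since the leading and constant coefficients $a\rho_1$, $bc\rho_1$ are positive while the linear coefficient $ab-c(\rho_2^2-\rho_1^2)$ is negative under (ii), Vieta's formulas give positive sum and positive product, so both roots are in fact \emph{positive}. The paper's argument therefore breaks at the same point yours does, consistent with the counterexample above.
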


\begin{proof} 
	Consider the system \eqref{eq}. Due to the fact that $\rho_2 \neq 0$, this system is equivalent to 
	\begin{align}\label{eq1} 
		\left \{
		\begin{array}{rl}
			\omega^{\alpha_1} & = \displaystyle \frac{a}{\rho_2} + c\frac{\rho_1}{\rho_2}\omega^{-\alpha_2},
				\smallskip \\
	 		\rho_1 a \omega^{\alpha_2} + \rho_2 b \omega^{\alpha_1} &= c(\rho_2^2 - \rho_1^2).
		\end{array}
		\right.
	\end{align}
	Thus, we obtain
	\begin{equation}\label{add_eq}
		 a\rho_1\omega^{\alpha_2} + bc\rho_1\omega^{-\alpha_2}+ab - c(\rho_2^2-\rho_1^2) = 0. 
	\end{equation}
	Setting $X=\omega^{\alpha_2}$, equation \eqref{add_eq} takes the form
	\begin{align}\label{eeq}
		 a \rho_1 X^2 + \left[ ab - c (\rho_2^2-\rho_1^2) \right] X + b c \rho_1 = 0. 
	\end{align}
	The discriminant of the quadratic equation \eqref{eeq} is 
	\begin{align*}
	 	\Delta 
	 	& = \left ( ab - c(\rho_2^2-\rho_1^2) \right )^2 - 4abc\rho_1^2 \\
	 	& =a^2b^2 + c^2(\rho_2^2-\rho_1^2)^2-2abc(\rho_1^2 + \rho_2^2) \\
		& = \left(ab -c(\rho_1^2 + \rho_2^2)\right)^2 - 4c^2\rho_1^2\rho_2^2 \\
		& =\left ( ab - c(\rho_1+\rho_2)^2 \right )\left ( ab - c(\rho_2-\rho_1)^2 \right ).
	\end{align*}
	\noindent (i) Clearly, if $c(\rho^2_2 - \rho_1^2) < ab < c(\rho_2^2 + \rho_1^2),$ then $\Delta < 0.$ 
	Hence, the quadratic equation \eqref{eeq} has no real roots. This implies that
	the system \eqref{eq} has no root $\omega>0$. This together with Lemma \ref{bd3.1}(ii) and 
	Lemma \ref{bd3.1}(iv) shows that $Q$ has no purely imaginary zero.
	
	\noindent (ii) If $ab \leq c\left ( \rho_2 -\rho_1 \right )^2$, 
	the quadratic equation \eqref{eeq} has two (not necessarily distinct) real roots. Because $ 0 < \rho_1 < \rho_2,$ 
	we have $( \rho_2 -\rho_1  )^2 < \rho_2^2-\rho_1^2.$ This implies that $ab - c(\rho_2^2-\rho_1^2) < 0$. 
	Moreover, $a,b,c, \rho_1 > 0,$  
	thus the two roots of the quadratic equation \eqref{eeq} are negative. 
	Hence, in view of the relation $X = \omega^{\alpha_2}$ with $0 < \alpha_2 \le 1$
	between the solution $X$ of \eqref{eeq} and the solution $\omega$ of \eqref{eq},
	the system \eqref{eq} has no root $\omega>0$. Using Lemma 
	\ref{bd3.1}(ii) and Lemma \ref{bd3.1}(iv), we see that $Q$ has no purely imaginary zero. 
\end{proof}

Recall that if $c \leq 0,$ then $Q$ has at least one non-negative real zero, which precludes any 
kind of stability. 
Thus, in this section, we only consider the case $c >0$. As shown above, because $Q$ 
has only a finite number of zeros in the domain $\mathbb{C}$, 
there exists a constant $R > 0$ which is large enough such that $Q$ has no zero in
$\left \{z\in \mathbb C: |z|\geq R\right \}.$ On the other hand, $Q$ is continuous at $0$ with $Q(0) > 0$,
so we can find a small constant $\varepsilon > 0$ such that $Q(z)\neq 0$ 
in $\left \{z\in\mathbb C: |z| \leq \varepsilon \right \}$. 
We define an oriented contour $\gamma$ formed by four segments:
\begin{align*}
	\gamma_1 &:=\left \{s= i\omega: \varepsilon \leq \omega \leq R  \right \}; \\
	\gamma_2 &:=\left \{ s=R e^{i\varphi}: -\frac{\pi}{2}\leq \varphi  \leq \frac{\pi}{2} \right\}; \\
	\gamma_3 &:=\left \{ s=\varepsilon e^{i\varphi}: -\frac{\pi}{2}\leq \varphi  \leq \frac{\pi}{2} \right\}; \\
	\gamma_4 &:=\left \{s= i\omega: -R \leq \omega \leq -\varepsilon  \right \}.
\end{align*}
Clearly, if $Q$ has no purely imaginary zero, then all zeros in the
closed right hand side of the complex plane $\{s=r(\cos \phi+i\sin \phi)\in \mathbb C: r\geq 0, \phi\in (-\pi,\pi]\}$ 
of $Q$ (if they exist) must lie inside the contour $\gamma$. Based on Cauchy's argument principle in complex analysis, 
we see that $n(Q(C), 0) = Z - P,$ where $n(Q(C), 0)$ is the number of encirclements in the positive direction 
(counter-clockwise) around the origin of the the Nyquist plot $Q(\gamma)$, $Z$ and $P$ are the number of zeros and 
number of poles of $Q$ inside the contour $\gamma$ in the $s$-plane, respectively. Due to the fact that $Q$ is analytic 
inside $\gamma$, we have $P=0$ and thus $n(Q(C), 0) = Z.$ 
This implies that if $Q$ has no purely imaginary zero, then all roots of the equation 
$Q(s) = 0$ lie in the open left-half complex plane if and only if
$n(Q(C), 0) = 0.$ Notice that $Q(0)>0$, $\lim_{|s|\to\infty}|Q(s)|=\infty$, $\Re Q(i\omega) = \Re Q(-i\omega)$ and  $\Im Q(i\omega) = -\Im Q(-i\omega)$. It is easy to see that $n(Q(C), 0) = 0$ if $\Re Q(i\omega) > 0$ for any $\omega>0$ that satisfies $\Im Q(i\omega) = 0$.
Consider $\omega>0$ and put
\begin{align}
	 h_1(\omega) 
	 &:= \Re (Q(i\omega)) 
		 = \omega^{\alpha_1 + \alpha_2}\cos\frac{(\alpha_1 + \alpha_2)\pi}{2} 
	 		- a\omega^{ \alpha_2}\cos\frac{ \alpha_2\pi}{2} 
		 	- b\omega^{\alpha_1}\cos\frac{\alpha_1 \pi}{2}+c,  \\
	h_2(\omega) 
	&:= \Im (Q(i\omega)) 
		= \omega^{\alpha_1 + \alpha_2}\sin\frac{(\alpha_1 + \alpha_2)\pi}{2} 
			- a\omega^{ \alpha_2}\sin\frac{\alpha_2\pi}{2} 
			- b\omega^{\alpha_1} \sin\frac{\alpha_1 \pi}{2}.
\end{align}
If there exists some $\omega > 0$ such that $h_2(\omega) = 0,$ then 
\begin{align*}
	\sin\frac{(\alpha_1 + \alpha_2)\pi}{2}h_1(\omega) 
	&= \sin\frac{(\alpha_1 + \alpha_2)\pi}{2}h_1(\omega) -\cos\frac{(\alpha_1 + \alpha_2)\pi}{2}h_2(\omega) \\
	&= a\omega^{ \alpha_2}\left ( \sin\frac{ \alpha_2\pi}{2}\cos\frac{(\alpha_1 + \alpha_2)\pi}{2}
		- \cos\frac{ \alpha_2\pi}{2}\sin\frac{(\alpha_1 + \alpha_2)\pi}{2} \right ) \\
	& {} \quad + b\omega^{\alpha_1}\left ( \sin\frac{ \alpha_1\pi}{2}\cos\frac{(\alpha_1 + \alpha_2)\pi}{2}
		- \cos\frac{ \alpha_1\pi}{2}\sin\frac{(\alpha_1 + \alpha_2)\pi}{2} \right ) \\ 
	& {} \quad + c\sin\frac{(\alpha_1 + \alpha_2)\pi}{2} \\
	&= c\sin\frac{(\alpha_1 + \alpha_2)\pi}{2}-a\omega^{ \alpha_2}\sin\frac{ \alpha_1\pi}{2} 
		- b\omega^{\alpha_1}\sin\frac{ \alpha_2\pi}{2}.
\end{align*}
Thus, the variable $\omega>0$ satisfies the system
\begin{align}\label{qh1}
	\left\{
	\begin{array}{rl}
		 h_2(\omega) &= 0 \\
		 h_1(\omega) &= c - a\omega^{\alpha_2}q_1 - b\omega^{\alpha_1}q_2
	\end{array}
	\right.
\end{align}
with
\begin{align}
 	q_1 = \frac{\sin\frac{\alpha_1 \pi}{2}}{\sin\frac{(\alpha_2 + \alpha_1)\pi}{2}},\hspace{2cm}  
	q_2 = \frac{\sin\frac{\alpha_2 \pi}{2}}{\sin\frac{(\alpha_2 + \alpha_1)\pi}{2}}.
\end{align}
It is then clear from our assumptions on $\alpha_1$ and $\alpha_2$ that $q_1, q_2 > 0$.
Based on the analysis above, we obtain some sufficient conditions that ensure that the 
function $Q(\cdot)$ has no zero lying in the closed right half of the complex plane. 

\begin{lemma}\label{bd3.2}  
	Assume that $a, b \leq 0,$ and $c > 0$.  
	Then, all zeros of $Q$ are in the open left-half complex plane regardless of $\alpha_1$ and $\alpha_2$.
\end{lemma}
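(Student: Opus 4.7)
The plan is to apply the Nyquist/argument-principle framework assembled in the paragraph just before the lemma. Two things must be established: first, that $Q$ has no zero on the imaginary axis, and second, that the Nyquist plot $Q(\gamma)$ does not encircle the origin, from which Cauchy's argument principle will force $Z=0$ inside $\gamma$, that is, no zero in the open right half plane.

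For the imaginary axis, $Q(0)=c>0$ excludes the origin, and by Lemma~\ref{bd3.1}(ii) it suffices to consider $s=i\omega$ with $\omega>0$. I would examine the imaginary part
\[
h_2(\omega) = \omega^{\alpha_1+\alpha_2}\sin\tfrac{(\alpha_1+\alpha_2)\pi}{2} - a\,\omega^{\alpha_2}\sin\tfrac{\alpha_2\pi}{2} - b\,\omega^{\alpha_1}\sin\tfrac{\alpha_1\pi}{2}
\]
introduced earlier. Under the running assumption $0<\alpha_1<\alpha_2\le 1$ one has $0<\alpha_1+\alpha_2<2$, so each of the three sines above is strictly positive. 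Combined with $a\le 0$ and $b\le 0$, every summand is non-negative and the first is strictly positive for $\omega>0$, hence $h_2(\omega)>0$. Thus $Q(i\omega)\ne 0$ for $\omega>0$, and the conjugate symmetry of Lemma~\ref{bd3.1}(ii) rules out the negative imaginary axis as well.

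The second step is then immediate. Recall the sufficient criterion derived just before the lemma: $n(Q(\gamma),0)=0$ holds as soon as $\Re Q(i\omega)>0$ at every $\omega>0$ for which $\Im Q(i\omega)=0$. Since Step~1 produces no such $\omega$ at all, the hypothesis holds vacuously, so $n(Q(\gamma),0)=0$. Because $Q$ is analytic inside $\gamma$ we also have $P=0$, and the argument principle yields $Z=n(Q(\gamma),0)=0$, so $Q$ has no zero in the open right half plane. Together with Step~1, no zero of $Q$ lies in the closed right half plane, which is the claim.

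I do not foresee a genuine obstacle: once the Nyquist setup is in place, the sign hypotheses on $a$, $b$, $c$ reduce the whole argument to a one-line positivity check on $h_2$. The only point worth flagging is the strict inequality $\alpha_1+\alpha_2<2$, which is precisely what ensures $\sin\frac{(\alpha_1+\alpha_2)\pi}{2}>0$ and hence the strict positivity of the leading term of $h_2$; this in turn relies on the incommensurate assumption $\alpha_1<\alpha_2\le 1$ under which the whole section operates.
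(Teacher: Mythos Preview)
Your argument is correct. The positivity of $h_2(\omega)$ for all $\omega>0$ under $a,b\le 0$ follows exactly as you say (the strict inequality $\alpha_1+\alpha_2<2$ being guaranteed by $\alpha_1<\alpha_2\le 1$), so the vacuous satisfaction of the Nyquist criterion set up immediately before the lemma gives $n(Q(\gamma),0)=0$ and hence no zeros in the closed right half plane.

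The paper itself does not give an argument at all: its proof of this lemma is a bare citation to \cite[Proposition~1(b)]{Kaslik2018}. Your proof is therefore not the same as the paper's, but it is in fact more in the spirit of the paper, since it uses precisely the argument-principle machinery that the paper has just developed and that drives the proofs of the subsequent Lemmas~\ref{bd3.3}--\ref{bd3.6}. What your approach buys is self-containment and uniformity with the rest of Section~\ref{ss}; what the citation buys is brevity and an independent confirmation from the literature.
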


\begin{proof}
	See \cite[Proposition 1(b)]{Kaslik2018}.  
\end{proof}

\begin{lemma}\label{bd3.3} 
	Let $0 < \alpha_1 < \alpha_2 \leq 1$. Assume that $a = 0$, $b > 0$ and 
	\begin{equation}
		\label{eq:bd3.3}
		c > \left ( b q_1 \right )^{\alpha_1/\alpha_2} b q_2. 
	\end{equation}
	Then, all zeros of $Q$ lie in the open left-half of the complex plane.
\end{lemma}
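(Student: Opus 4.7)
The plan is to apply directly the Nyquist-type framework developed immediately before the lemma. With $a=0$ the characteristic function is
\[
Q(s) = s^{\alpha_1+\alpha_2} - b s^{\alpha_1} + c.
\]
Since $b, q_1, q_2 > 0$ and the hypothesis gives $c > (bq_1)^{\alpha_1/\alpha_2} b q_2 > 0$, we are in the regime $c > 0$, so the general reasoning described in the excerpt (based on Cauchy's argument principle applied to the contour $\gamma = \gamma_1 \cup \gamma_2 \cup \gamma_3 \cup \gamma_4$) is applicable. According to that reasoning, to conclude that all zeros of $Q$ lie in the open left-half plane, it suffices to verify that (a) $Q$ has no purely imaginary zero and (b) $h_1(\omega) > 0$ at every positive $\omega$ satisfying $h_2(\omega) = 0$. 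Both (a) and (b) will follow from one short computation.

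First I would solve $h_2(\omega)=0$ for $\omega>0$ with $a=0$. The equation becomes
\[
\omega^{\alpha_1}\Bigl(\omega^{\alpha_2}\sin\tfrac{(\alpha_1+\alpha_2)\pi}{2} - b\sin\tfrac{\alpha_1\pi}{2}\Bigr) = 0,
\]
whose unique positive root is
\[
\omega_0 = (bq_1)^{1/\alpha_2}, \qquad \text{so that} \qquad \omega_0^{\alpha_1} = (bq_1)^{\alpha_1/\alpha_2}.
\]
Next I would evaluate $h_1$ at $\omega_0$ using the reduced form from system \eqref{qh1}, which with $a=0$ reads
\[
h_1(\omega_0) = c - b q_2 \omega_0^{\alpha_1} = c - b q_2 (bq_1)^{\alpha_1/\alpha_2}.
\]
The hypothesis $c > (bq_1)^{\alpha_1/\alpha_2}b q_2$ gives exactly $h_1(\omega_0) > 0$.

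Finally, combining this with the symmetries $h_1(\omega)=h_1(-\omega)$, $h_2(-\omega)=-h_2(\omega)$ noted in the discussion preceding the lemma, one gets that $Q(i\omega)\ne 0$ for every real $\omega\ne 0$ (since either $h_2(\omega)\ne 0$ or $h_2(\omega)=0$ and then $h_1(\omega)>0$), while $Q(0)=c>0$ rules out a zero at the origin. So (a) holds, and the Nyquist count argument from the text yields $n(Q(\gamma),0)=0$, hence $Z=0$, i.e.\ no zero of $Q$ lies in the closed right-half plane.

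I do not expect any real obstacle; the only care needed is to track that the single positive root of $h_2$ is indeed $\omega_0$ (no other branches appear because $a=0$ collapses $h_2$ to a single monomial identity) and that the chain of inequalities $c > 0$, the strict positivity of $h_1(\omega_0)$, and the applicability of the contour argument are all supplied by the standing hypothesis $c > (bq_1)^{\alpha_1/\alpha_2}bq_2$.
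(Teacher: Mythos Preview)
Your proposal is correct and follows essentially the same route as the paper: with $a=0$ you identify the unique positive root $\omega_0=(bq_1)^{1/\alpha_2}$ of $h_2(\omega)=0$, use \eqref{qh1} to compute $h_1(\omega_0)=c-(bq_1)^{\alpha_1/\alpha_2}bq_2>0$, and invoke the argument-principle framework set up before the lemma. The paper's own proof is exactly this computation, stated slightly more tersely.
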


\begin{proof} 
	Because $a = 0$ and $b > 0$, we see that $h_2(\omega_0) = 0$ if and only if 
	$\omega_0 = \left ( bq_1 \right )^{1/\alpha_2}$. From \eqref{qh1}, we have
	$h_1(\omega_0) = c - \left ( bq_1 \right )^{\alpha_1/\alpha_2} b q_2$. 
	By the assumption \eqref{eq:bd3.3}, we obtain $h_1(\omega_0) > 0$. 
	This implies that all zeros of $Q$ lie in the open left-half of complex plane.    
\end{proof}

\begin{lemma}\label{bd3.4} 
	Let $0 < \alpha_1 < \alpha_2 \leq 1.$ Assume that $b = 0$, $a > 0$ and  
	\begin{equation}
		\label{eq:bd3.4}
		c > \left ( a q_2 \right )^{\alpha_2/\alpha_1} a q_1.
	\end{equation}
	Then, all zeros of $Q$ lie in the open left-half complex plane. 
\end{lemma}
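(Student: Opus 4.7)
The strategy is entirely parallel to the proof of Lemma \ref{bd3.3}, with the roles of $a,b$ (and correspondingly $\alpha_1,\alpha_2$) interchanged. The plan is to exploit the system \eqref{qh1}: locate the unique positive $\omega_0$ satisfying $h_2(\omega_0) = 0$, evaluate $h_1(\omega_0)$, and show it is strictly positive under the hypothesis \eqref{eq:bd3.4}. By the discussion immediately preceding Lemma \ref{bd3.2}, together with the observation that $c > 0$ follows from \eqref{eq:bd3.4} (so $Q(0) > 0$), this yields $n(Q(\gamma),0) = 0$ and hence all zeros of $Q$ lie in the open left half plane.

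First I would substitute $b = 0$ into
\[
	h_2(\omega) = \omega^{\alpha_1+\alpha_2}\sin\tfrac{(\alpha_1+\alpha_2)\pi}{2} - a\omega^{\alpha_2}\sin\tfrac{\alpha_2\pi}{2} - b\omega^{\alpha_1}\sin\tfrac{\alpha_1\pi}{2},
\]
which, after factoring out $\omega^{\alpha_2}$, reduces the equation $h_2(\omega) = 0$ to
\[
	\omega^{\alpha_1}\sin\tfrac{(\alpha_1+\alpha_2)\pi}{2} = a \sin\tfrac{\alpha_2\pi}{2}.
\]
Since $0 < \alpha_1 < \alpha_2 \leq 1$ implies $0 < (\alpha_1+\alpha_2)\pi/2 < \pi$, the sine on the left is strictly positive, so this has the unique positive solution
\[
	\omega_0 = (a q_2)^{1/\alpha_1}.
\]

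Next I would plug $\omega_0$ and $b = 0$ into the second equation of \eqref{qh1} to obtain
\[
	h_1(\omega_0) = c - a \omega_0^{\alpha_2} q_1 = c - a (a q_2)^{\alpha_2/\alpha_1} q_1.
\]
The hypothesis \eqref{eq:bd3.4} is exactly that this quantity is strictly positive. The last step is to invoke the criterion established in the paragraph containing \eqref{qh1}: since $h_1(\omega) > 0$ at every positive $\omega$ with $h_2(\omega) = 0$, the Nyquist plot $Q(\gamma)$ does not encircle the origin, so $Q$ has no zeros in the closed right half plane. (One should also briefly remark that Corollary \ref{hq1} or Lemma \ref{bd3.1}(iv) is not directly applicable here because $b = 0$, which is why we must argue through the $h_1,h_2$ decomposition rather than through \eqref{eq}.)

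No step here looks technically difficult; the only thing to be careful about is the sign check $\sin((\alpha_1+\alpha_2)\pi/2) > 0$ and the verification that $\omega_0$ is the \emph{only} positive root of $h_2$, both of which are immediate from the strict ordering $0 < \alpha_1 < \alpha_2 \leq 1$. The proof should therefore be one short paragraph mirroring that of Lemma \ref{bd3.3}.
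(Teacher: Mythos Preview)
Your proposal is correct and follows essentially the same approach as the paper: identify the unique positive root $\omega_0 = (aq_2)^{1/\alpha_1}$ of $h_2$, substitute into \eqref{qh1} to get $h_1(\omega_0) = c - (aq_2)^{\alpha_2/\alpha_1}aq_1$, and conclude via the Nyquist argument. Your added remarks (that $c>0$ follows from \eqref{eq:bd3.4}, and the sign check on $\sin\frac{(\alpha_1+\alpha_2)\pi}{2}$) are correct and simply make explicit what the paper leaves implicit.
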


\begin{proof}
	Since $b = 0$ and $a > 0$, it is easy to show that $h_2(\omega_0) = 0$ if and only if 
	$\omega_0 = \left ( aq_2 \right )^{1/\alpha_1}$. From \eqref{qh1}, it follows that
	$h_1(\omega_0) = c - \left ( aq_2 \right )^{\alpha_2/\alpha_1}aq_1$. 
	By the assumption \eqref{eq:bd3.4},
	we see that $h_1(\omega_0) > 0$
	which implies that all zeros of $Q$ lie in the open left-half of the complex plane.
\end{proof}

\begin{lemma}\label{bd3.5} 
	Let $0 < \alpha_1 < \alpha_2 \leq 1.$ Assume that $a, b, c > 0 $. 
	Then, all zeros of $Q$ are in the open left-half of the complex plane if one of the following conditions holds:
	\begin{itemize}
	\item [(i)]  $  aq_2+bq_1 > 1$ and $aq_2\left( (a+b)q_2  \right )^{\alpha_2/\alpha_1} + b(a+b)q_2^2  \leq c.$
	\item [(ii)] $  aq_2+bq_1 \leq 1$ and $ aq_1 + bq_2 < c.$
	\end{itemize} 
\end{lemma}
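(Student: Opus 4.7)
The plan is to use the reduction established just before the statement of the lemma: to conclude that all zeros of $Q$ lie in the open left half-plane it is enough to verify that for every $\omega>0$ with $h_2(\omega)=0$ one has $h_1(\omega)>0$. Indeed, such a condition automatically excludes purely imaginary zeros of $Q$ (together with $Q(0)=c>0$) and forces $n(Q(\gamma),0)=0$. On the zero set of $h_2$, the second line of the system \eqref{qh1} gives $h_1(\omega) = c - aq_1\omega^{\alpha_2} - bq_2\omega^{\alpha_1}$, so everything comes down to upper bounds on $\omega^{\alpha_1}$ and $\omega^{\alpha_2}$.

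The key identity is obtained by dividing $h_2(\omega)=0$ by $\sin\tfrac{(\alpha_1+\alpha_2)\pi}{2}$:
\begin{equation*}
\omega^{\alpha_1+\alpha_2}=aq_2\omega^{\alpha_2}+bq_1\omega^{\alpha_1}.
\end{equation*}
For $\omega\ge 1$ the estimate $\omega^{\alpha_1}\le\omega^{\alpha_2}$ rearranges this to $\omega^{\alpha_1}\le aq_2+bq_1$; for $\omega<1$ we trivially have $\omega^{\alpha_1}<1$. I expect this step---replacing the exact transcendental equation for $\omega$ by a coarse \emph{inequality} on $\omega^{\alpha_1}$---to be the main technical point, since in contrast to Lemmas \ref{bd3.3} and \ref{bd3.4} the equation $h_2(\omega)=0$ no longer admits an explicit root when both $a$ and $b$ are nonzero.

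In case (ii), where $aq_2+bq_1\le 1$, the two observations above combine to give $\omega^{\alpha_1}\le 1$ and $\omega^{\alpha_2}\le 1$ for every admissible $\omega$; hence
\begin{equation*}
h_1(\omega)\ge c-aq_1-bq_2>0
\end{equation*}
by the hypothesis $aq_1+bq_2<c$.

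In case (i), the hypothesis $aq_2+bq_1>1$ together with the strict inequality $q_1<q_2$ (which follows from $0<\alpha_1<\alpha_2\le 1$ and the monotonicity of $\sin$ on $[0,\pi/2]$) yields $(a+b)q_2>1$. Combining this with the bound from the key identity, one sees that $\omega^{\alpha_1}\le (a+b)q_2$ regardless of whether $\omega\ge 1$ or $\omega<1$, so $\omega^{\alpha_2}=(\omega^{\alpha_1})^{\alpha_2/\alpha_1}\le ((a+b)q_2)^{\alpha_2/\alpha_1}$. Substituting into $h_1(\omega)$ and invoking $c\ge aq_2((a+b)q_2)^{\alpha_2/\alpha_1}+b(a+b)q_2^2$ produces
\begin{equation*}
h_1(\omega)\ge c-aq_1((a+b)q_2)^{\alpha_2/\alpha_1}-bq_2(a+b)q_2 \ge a(q_2-q_1)((a+b)q_2)^{\alpha_2/\alpha_1}>0,
\end{equation*}
the final strict inequality coming from $q_2>q_1$ and $a>0$, which upgrades the non-strict hypothesis to a strict conclusion. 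This completes the plan.
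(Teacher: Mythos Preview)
Your argument is correct. It reaches the same conclusion as the paper but by a shorter and more elementary route. The paper first carries out a full derivative analysis of $h_2$: it computes $h_2'(\omega)=\omega^{\alpha_1-1}g_2(\omega)$, studies the sign of $g_2'$, deduces that $g_2$ has a unique positive root, and from this infers that $h_2$ is first decreasing then increasing on $(0,\infty)$; only with this monotonicity in hand can it conclude that $h_2$ has a \emph{unique} positive root $\omega_3$, and then it locates $\omega_3$ relative to $1$ and $((a+b)q_2)^{1/\alpha_1}$ by evaluating the sign of $h_2$ at those points. Your approach dispenses with all of this: you work directly with the algebraic identity $\omega^{\alpha_1+\alpha_2}=aq_2\omega^{\alpha_2}+bq_1\omega^{\alpha_1}$ to bound $\omega^{\alpha_1}$ (and hence $\omega^{\alpha_2}$) for \emph{every} root of $h_2$, which is precisely what the Nyquist criterion from the preceding discussion requires. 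Uniqueness of the root is never needed, nor is any differentiation. The paper's route has the minor by-product of showing that $h_2$ has exactly one positive root, which your argument does not establish; but since the lemma does not assert this, your version is both sufficient and tidier.
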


\begin{proof} 
	We have 
	\begin{align}\label{g2}
		 h_2'(\omega)  
		 &= (\alpha_1 + \alpha_2)\omega^{\alpha_1 + \alpha_2-1}\sin\frac{(\alpha_1 + \alpha_2)\pi}{2} 
		 	- a\alpha_2\omega^{ \alpha_2-1}\sin\frac{ \alpha_2\pi}{2}
			- \alpha_1\omega^{\alpha_1-1 }\sin\frac{\alpha_1 \pi}{2}  \nonumber\\
		&= \omega^{\alpha_1-1}\left ( (\alpha_1 + \alpha_2)\omega^{ \alpha_2}\sin\frac{(\alpha_1 + \alpha_2)\pi}{2} 
				- a\alpha_2\omega^{ \alpha_2-\alpha_1}\sin\frac{ \alpha_2\pi}{2}
				- b\alpha_1 \sin\frac{\alpha_1 \pi}{2} \right )\nonumber \\
		&= \omega^{\alpha_1-1}g_2(\omega)
	\end{align}
	where 
	\begin{equation}
		\label{eq:def-g2}
		g_2(\omega)
			: =  
			(\alpha_1 + \alpha_2)\omega^{ \alpha_2}\sin\frac{(\alpha_1 + \alpha_2)\pi}{2} 
			- a \alpha_2\omega^{ \alpha_2-\alpha_1}\sin\frac{ \alpha_2\pi}{2}
			- b \alpha_1 \sin\frac{\alpha_1 \pi}{2}.
	\end{equation}
	Notice that
	\begin{align*}
		 g_2'(\omega)
		 &= \alpha_2(\alpha_1 + \alpha_2) \omega^{\alpha_2 - 1}\sin\frac{(\alpha_1 + \alpha_2)\pi}{2}
			- (\alpha_2 - \alpha_1)\alpha_2 a\omega^{\alpha_2 - \alpha_1 -1}\sin\frac{ \alpha_2\pi}{2}. 
	\end{align*}
	It is not difficult to check that $g_2'(\omega) < 0$ in $(0,\omega_1)$ 
	and $g_2'(\omega) > 0$ in $(\omega_1,\infty),$ where
	$\omega_1 = \left ( \frac{\alpha_2-\alpha_1}{\alpha_1 + \alpha_2}aq_2 \right )^{1/\alpha_1}.$ 
	Due to the fact that $g_2(0) = -\alpha_1 b \sin\frac{\alpha_1\pi}{2} < 0,$ and 
	$\lim_{\omega \to +\infty}g_2(\omega) = +\infty,$ the equation $g_2(\omega) = 0$ 
	has a unique root $\omega_2  \in (0, \infty).$ Moreover $g_2(\omega) < 0$ in $(0,\omega_2)$ and 
	$g_2(\omega) > 0$ in $(\omega_2, \infty).$
	Hence, $h_2$ is decreasing in $(0,\omega_2)$
	and increasing in $(\omega_2, \infty).$ On the other hand, $h_2(0) = 0$ and  
	$\lim_{\omega \to +\infty}h_2(\omega) = +\infty$. This shows that the equation $h_2(\omega) = 0$ 
	has a unique root $\omega_3 \in (0,\infty)$ and then $h_2(\omega) < 0$ for all $\omega \in (0,\omega_3)$
	and $h_2(\omega) > 0$ for all $\omega \in (\omega_3,\infty).$ 
	
	\noindent (i) If $  aq_2+bq_1 > 1$, then $h_2(1) < 0.$ This implies that $\omega_3 > 1.$ 
	Moreover, due to $\alpha_1 < \alpha_2 \leq 1$, we have  
	\begin{align}
		 h_2(\omega) 
		 > \omega^{\alpha_1 + \alpha_2}\sin\frac{(\alpha_1 + \alpha_2)\pi}{2}
		 	- (a+b)\omega^{\alpha_2}\sin\frac{\alpha_2\pi}{2}
	\end{align}
	for every $\omega > 1$,
	and thus $h_2\left( \left( (a+b)q_2  \right )^{1/\alpha_1}  \right) > 0.$ 
	This implies that $1 < \omega_3 < \left( (a+b)q_2  \right )^{1/\alpha_1}.$ 
	Hence, if 
	\begin{align*}
		c \geq aq_2\left( (a+b)q_2  \right )^{\alpha_2/\alpha_1} + b(a+b)q_2^2,  
	\end{align*}
	due to $q_2 > q_1 > 0,$ we obtain $c > a\omega_3^{\alpha_2}q_1 + b\omega_3^{\alpha_1}q_2,$ 
	which together with \eqref{qh1} leads to  $h_1(\omega_3) > 0.$ The proof of this part is complete.
	
	\noindent (ii) If $  aq_2+bq_1 \leq 1$, then $h_2(1) \geq 0.$ 
	This implies that $0 < \omega_3 \leq 1.$ Due to $ c > aq_1 + bq_2,$ we have
	$c > a\omega_3^{\alpha_2}q_1 + b\omega_3^{\alpha_1}q_2,$. This together with \eqref{qh1} shows 
	that $h_1(\omega_3) > 0.$ The proof is finished.
\end{proof}

\begin{lemma}\label{bd3.6} 
	Assume that $a < 0$ and $b, c > 0$. 
	Then, all zeros of $Q$ are in the open left-half complex plane if one of the following conditions holds:
	\begin{itemize}
	\item [(i)]  $a q_2+b q_1 > 1$ and $\left ( b q_1 \right )^{\alpha_1/\alpha_2} b q_2 \leq c.$
	\item [(ii)] $a q_2+b q_1 \leq 1$ and $b q_2 \leq c$.
	\end{itemize} 
\end{lemma}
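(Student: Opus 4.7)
The plan is to mimic the proof of Lemma \ref{bd3.5}, reducing the claim via the Nyquist/argument-principle discussion preceding Lemma \ref{bd3.2} to proving that $h_1(\omega_3) > 0$, where $\omega_3 > 0$ is the unique positive root of $h_2$. To locate $\omega_3$ I would first inspect the monotonicity of $h_2$ via formula \eqref{g2}. The crucial new feature of the sign $a < 0$ is that both summands of $g_2'(\omega)$ become positive, so $g_2$ is strictly increasing on $(0,\infty)$; combined with $g_2(0) = -b\alpha_1\sin\frac{\alpha_1\pi}{2} < 0$ and $g_2(\omega)\to +\infty$, this gives a unique $\omega_2 > 0$ with $g_2(\omega_2) = 0$, hence $h_2$ is decreasing on $(0,\omega_2)$ and increasing on $(\omega_2,\infty)$. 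Since $h_2(0)=0$ and $h_2(\omega)\to+\infty$, the equation $h_2(\omega)=0$ has a unique positive root $\omega_3$, and $h_2(\omega)>0$ precisely for $\omega>\omega_3$.

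A short computation gives $h_2(1) = \sin\frac{(\alpha_1+\alpha_2)\pi}{2}(1-aq_2-bq_1)$, which supplies the dichotomy. In case (ii), the hypothesis $aq_2+bq_1 \leq 1$ yields $h_2(1)\geq 0$, hence $\omega_3\leq 1$. From the second equation of \eqref{qh1}, together with $a < 0$, $b > 0$ and $\omega_3^{\alpha_1}\leq 1$, I obtain
\[
h_1(\omega_3) = c - a\omega_3^{\alpha_2}q_1 - b\omega_3^{\alpha_1}q_2 \geq c + (-a)\omega_3^{\alpha_2}q_1 - bq_2 > 0.
\]

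In case (i), the assumption $aq_2+bq_1 > 1$ with $a<0$ forces $bq_1 > 1$, and gives $h_2(1)<0$, hence $\omega_3 > 1$. The key step is the upper bound $\omega_3 < (bq_1)^{1/\alpha_2}$. Since $a<0$, we have $h_2(\omega) > \omega^{\alpha_1+\alpha_2}\sin\frac{(\alpha_1+\alpha_2)\pi}{2} - b\omega^{\alpha_1}\sin\frac{\alpha_1\pi}{2}$ for $\omega>0$, exactly as in the proof of Lemma \ref{bd3.3}; the right-hand side vanishes at $\omega = (bq_1)^{1/\alpha_2}$, so $h_2((bq_1)^{1/\alpha_2})>0$, whence $\omega_3 < (bq_1)^{1/\alpha_2}$. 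Using \eqref{qh1} once more,
\[
h_1(\omega_3) > c - b\omega_3^{\alpha_1}q_2 > c - (bq_1)^{\alpha_1/\alpha_2}bq_2 \geq 0.
\]

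I do not anticipate any real obstacle here; the argument is a direct fusion of the proofs of Lemmas \ref{bd3.3} and \ref{bd3.5}, with the negativity of $a$ playing a double role: it makes $g_2$ strictly monotone (simplifying the analysis of $h_2$) and it furnishes the pointwise comparison with the $a=0$ version of $h_2$ that yields the upper bound on $\omega_3$ needed in case (i).
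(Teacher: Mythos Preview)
Your proposal is correct and follows essentially the same approach as the paper's own proof: both analyse the monotonicity of $h_2$ via $g_2$ (using $a<0$ to get $g_2'>0$ directly), locate the unique positive root of $h_2$ relative to $1$ via the sign of $h_2(1)$, and in case (i) bound that root above by $(bq_1)^{1/\alpha_2}$ through the comparison $h_2(\omega) > \omega^{\alpha_1+\alpha_2}\sin\frac{(\alpha_1+\alpha_2)\pi}{2} - b\omega^{\alpha_1}\sin\frac{\alpha_1\pi}{2}$, then conclude $h_1>0$ at that root from \eqref{qh1}. The only differences are cosmetic (your indices $\omega_2,\omega_3$ versus the paper's $\omega_1,\omega_2$, and your explicit remark that $bq_1>1$ in case (i)).
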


\begin{proof}
	As shown in the proof of Lemma \ref{bd3.5}, 
	we have $h_2'(\omega) = \omega^{\alpha_1-1} g_2(\omega)$
	where $g_2$ is as in \eqref{eq:def-g2}.
	Notice that
	\[
		g_2'(\omega)
		= \alpha_2(\alpha_1 + \alpha_2) \omega^{\alpha_2 - 1}\sin\frac{(\alpha_1 + \alpha_2)\pi}{2}
			- (\alpha_2 - \alpha_1)\alpha_2 a\omega^{\alpha_2 - \alpha_1 -1}\sin\frac{ \alpha_2\pi}{2}
		> 0
	\]
	for $\omega \in (0,\infty)$. Due to the facts that $g_2(0) = -\alpha_1 b \sin\frac{\alpha_1\pi}{2} < 0$ and 
	$\lim_{\omega \to +\infty}g_2(\omega) = +\infty,$ 
	the equation $g_2(\omega) = 0$ has a unique root $\omega_1  \in (0, \infty)$. 
	Moreover $g_2(\omega) < 0$ in $(0,\omega_1)$
	and $g_2(\omega) > 0$ in $(\omega_1, \infty).$
	This shows that $h_2$ is decreasing on $(0,\omega_1)$
	and increasing on $(\omega_1, \infty)$. 
	On the other hand, since $h_2(0) = 0$ and  $\lim_{\omega \to +\infty}h_2(\omega) = +\infty$, 
	the equation $h_2(\omega) = 0$ 
	has a unique root $\omega_2 \in (0,\infty)$ and $h_2(\omega) < 0$ for all $\omega \in (0,\omega_2)$
	and $h_2(\omega) > 0$ for all $\omega \in (\omega_2,\infty).$ 
	 
	\noindent (i) If $aq_2+bq_1 > 1$ then $h_2(1) < 0.$ Thus $\omega_2 > 1$. 
	Moreover, since $a < 0,$  we have  
	\[
		h_2(\omega) 
		> \omega^{\alpha_1 + \alpha_2}\sin\frac{(\alpha_1 + \alpha_2)\pi}{2}
			- b\omega^{\alpha_1}\sin\frac{\alpha_1\pi}{2},
	\]
	and thus $h_2\left( (bq_1)^{1/{\alpha_2}} \right) > 0$. This implies that 
	$1 < \omega_2 < (bq_1)^{1/{\alpha_2}}.$ 
	From that if 
	\begin{align*}
		\left ( bq_1 \right )^{\alpha_1/\alpha_2}bq_2 \leq c,  
	\end{align*}
	we obtain $c > a\omega_2^{\alpha_2}q_1 + b\omega_2^{\alpha_1}q_2,$ 
	which together with \eqref{qh1} leads to  $h_1(\omega_2) > 0.$
	
	\noindent (ii) If $  aq_2+bq_1 \leq 1$, then $h_2(1) \geq 0.$ Thus $0 < \omega_2 \leq 1.$ 
	Due to $ c \geq  bq_2$, we see that
	$c > a \omega_2^{\alpha_2}q_1 + b\omega_2^{\alpha_1}q_2$. The proof is completed.
\end{proof}

\section{Estimates for the functions $\mathcal R^\lambda$ and $\mathcal S^\beta$}
\label{sec:specfun}

This section is devoted to the study of some important estimates of the functions 
$\mathcal R^\lambda$ and $\mathcal S^\beta$ on $(0,\infty)$. We recall their definitions from \eqref{eq:def-r-s}, viz.\ 
\begin{align*}
	\mathcal R^\lambda(t) 
	&= \mathcal L^{-1}\left \{\frac{s^{l(\alpha)-\lambda}}{sQ(s)}  \right \}(t) , 
		\quad \lambda \in \left \{ 0,\alpha_1, \alpha_2 \right \}, \\ 
 	\mathcal S^\beta(t)
 	&= \mathcal L^{-1}\left \{\frac{s^{l(\alpha)-\beta}}{Q(s)}  \right \}(t) , 
 		\quad \beta \in \left \{ \alpha_1, \alpha_2 ,l(\alpha)\right \},  
\end{align*}
where $l(\alpha) = \alpha_1 + \alpha_2$ and $Q(s) = s^{\alpha_1 + \alpha_2} -a_{11}s^{\alpha_2} - a_{22}s^{\alpha_1} + \det A$.  

\begin{lemma}\label{dl4.2}
	Let $\alpha_1, \alpha_2 \in (0, 1]$ and denote $  \nu = \min\{\alpha_1, \alpha_2 \}$. 
	Assume there are no zeros of the characteristic function $Q$ in the closed right-half complex plane. 
	Then, the following estimates hold for 
	$\lambda \in \left \{ 0,\alpha_1, \alpha_2 \right \}$ and $\beta \in \left \{ \alpha_1, \alpha_2 ,l(\alpha)\right \}$:
	\begin{align}
		\label{R1}
		\mathcal R^\lambda(t) &= O(t^{-\nu}) \quad \mbox{ as } t \to \infty , \\
		\label{S1}
		\mathcal S^\beta(t) &= O(t^{-\nu-1})  \quad \mbox{ as } t \to \infty , \\
		\label{S2}
		\mathcal S^\beta(t) &= O(t^{\nu-1})  \quad \mbox{ as } t \to 0 .
	\end{align}
	Moreover, 
	\begin{equation}\label{S3}
		\int_{0}^\infty |\mathcal S^\beta(t)| dt < \infty.
	\end{equation}
\end{lemma}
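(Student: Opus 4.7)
The plan is to evaluate and estimate the inverse Laplace transforms defining $\mathcal R^\lambda$ and $\mathcal S^\beta$ by deforming the Bromwich contour to a Hankel-type contour wrapping around the negative real axis. Since by hypothesis $Q$ has no zero in the closed right half-plane, we have in particular $Q(0)=\det A\neq 0$; in fact $\det A>0$, since otherwise $Q$ would possess a non-negative real zero by the intermediate value theorem. Consequently both integrands $s^{l(\alpha)-\lambda-1}/Q(s)$ and $s^{l(\alpha)-\beta}/Q(s)$ are analytic on a full neighborhood of the closed right half-plane once the branch cut of the fractional powers is placed on $(-\infty,0]$. The Bromwich contour can therefore be deformed into a Hankel contour $\gamma_{\varepsilon,\mu}$ composed of two rays $s=re^{\pm i\mu}$ with $r\geq\varepsilon$ and some $\mu\in(\pi/2,\pi)$, joined by the arc $|s|=\varepsilon$. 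The deformation is justified by the growth bound $|Q(s)|\geq c_0|s|^{l(\alpha)}$ for $|s|$ sufficiently large, which eliminates the contribution of the connecting large arcs.

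The long-time asymptotics $t\to\infty$ are extracted by substituting $s=u/t$ on the rays, which converts each Hankel integral into an integral on a fixed contour with a perturbation small in $1/t$. Expanding
\[
\frac{1}{Q(s)}=\frac{1}{\det A}+\frac{a_{11}s^{\alpha_2}+a_{22}s^{\alpha_1}}{(\det A)^2}+O(s^{2\alpha_1})
\qquad\text{as }s\to 0,
\]
the leading Hankel-contour evaluation of $(\det A)^{-1}\int e^{st}s^{\mu}\,ds$ reproduces the identity $t^{-\mu-1}/(\det A\cdot\Gamma(-\mu))$ whenever $\mu$ is not a non-negative integer, with remainder of higher algebraic order in $1/t$. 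Applied to $\mathcal S^\beta$: the choice $\beta=\alpha_2$ yields decay $t^{-\alpha_1-1}$; the choice $\beta=\alpha_1$ yields the faster decay $t^{-\alpha_2-1}$; and in the borderline case $\beta=l(\alpha)$ the leading constant $1/\det A$ is analytic and contributes nothing algebraic, so the next (non-analytic) term $a_{22}s^{\alpha_1}/(\det A)^2$ furnishes the decay $t^{-\alpha_1-1}$. In every case one obtains $\mathcal S^\beta(t)=O(t^{-\nu-1})$. The same argument applied to $\mathcal R^\lambda$ yields decay one algebraic power slower; the worst case $\lambda=\alpha_2$, where the integrand behaves like $s^{\alpha_1-1}/\det A$ near $0$, gives precisely $O(t^{-\nu})$.

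The short-time behavior of $\mathcal S^\beta$ is governed by the large-$|s|$ behavior of the integrand. Since $Q(s)\sim s^{l(\alpha)}$ as $|s|\to\infty$, we have $s^{l(\alpha)-\beta}/Q(s)\sim s^{-\beta}$, and the Hankel-contour computation gives $\mathcal S^\beta(t)\sim t^{\beta-1}/\Gamma(\beta)$ as $t\to 0^+$. The most singular case is $\beta=\alpha_1=\nu$, yielding the uniform bound $O(t^{\nu-1})$. Integrability of $\mathcal S^\beta$ on $(0,\infty)$ then follows by splitting at $t=1$: on $(0,1]$ the bound $t^{\nu-1}$ integrates because $\nu>0$, and on $[1,\infty)$ the bound $t^{-\nu-1}$ integrates because $\nu+1>1$.

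The main technical obstacle will be making the asymptotic extraction on the Hankel contour fully rigorous, in particular establishing the uniform-in-$t$ remainder estimates after a fixed number of terms in the expansion of $1/Q(s)$ near $s=0$, and correctly handling the borderline case $\beta=l(\alpha)$ where the naive leading contribution is analytic and therefore produces no algebraic decay, forcing the expansion of $1/Q(s)$ to be carried one additional order.
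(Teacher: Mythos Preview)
Your plan is sound and rests on the same skeleton as the paper: deform the Bromwich line to a Hankel-type contour $\gamma(\varepsilon,\mu)$ with opening angle $\mu=\frac{\pi}{2}+\delta\in(\frac{\pi}{2},\pi)$, justified by the existence of a zero-free sector strictly wider than the right half-plane (which the paper secures via the finiteness of zeros in any proper sector, Lemma~\ref{bd3.1}(iii); your phrase ``analytic on a full neighborhood of the closed right half-plane'' needs exactly this input to guarantee a uniform sector rather than a possibly shrinking strip at infinity).

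Where you diverge is in the bookkeeping for the large-$t$ estimates. You propose to substitute $s=u/t$ and expand $1/Q(s)$ in non-integer powers of $s$ near $0$, reading off each contribution via the Hankel identity $\tfrac{1}{2\pi i}\int_\gamma e^{u}u^{-z}\,du=1/\Gamma(z)$. The paper instead shrinks the arc radius to $\varepsilon/t$ (a $t$-dependent contour) and bounds the three pieces directly using only $|Q(s)|\geq\eta>0$ near $0$; for the borderline case $\beta=l(\alpha)$ it performs a single subtraction $\tfrac{1}{Q(s)}=\tfrac{1}{\det A}-\tfrac{Q(s)-\det A}{(\det A)Q(s)}$ rather than a full expansion, and kills the first term via $1/\Gamma(0)=0$. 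This route is shorter precisely because it never needs the uniform-in-$t$ remainder control that you correctly flag as the main obstacle; the cost is that it yields only the $O(\cdot)$ bounds, whereas your method would deliver genuine asymptotic expansions with explicit leading constants. For the short-time estimate the paper again rescales the contour (outward to radius $R/t$) and uses $|Q(s)|\geq\tfrac12|s|^{l(\alpha)}$ directly---effectively your large-$|s|$ heuristic turned into a hard bound. In summary: same contour idea, coarser but cheaper estimates in the paper; your approach would work but is more labor than the stated lemma requires.
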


The proof of the lemma is quite lengthy and technical. Therefore, in order not to distract the reader and to make it easier  
to focus on the main results, we provide the proof in the Appendix at the end of the paper. 

Our first application of Lemma \ref{dl4.2} deals with estimates for the convolution of $\mathcal S^\beta$ and a continuous function. 

\begin{theorem}\label{dl2}
	Let $\alpha_1, \alpha_2 \in (0,1]$ and $\beta \in \left \{ \alpha_1,\alpha_2,l(\alpha) \right \}$.
	For each continuous function $g : [0, \infty) \rightarrow \mathbb R$, we put
	\begin{align}
		 F_g^\beta (t) := \mathcal S^\beta \ast g(t) 
		 	= \int_0^t\mathcal S^\beta(t-s)g(s)ds
	\end{align}
	and
	\begin{align}
		\bar  F_g^\beta (t) := |\mathcal S^\beta| \ast |g| (t) 
		 	= \int_0^t\mathcal |S^\beta(t-s)| \cdot | g(s) |  ds.
	\end{align}
	Assume that all zeros of the characteristic function 
	$Q$ are in the open left-half complex plane. Then, the following statements hold. 
	\begin{itemize}
	\item [(i)] If $g$ is bounded then $\bar F_g^\beta$ and $F_g^\beta$ are also bounded.
	\item [(ii)] If $\lim_{t \to \infty} g(t) = 0$ then $\lim_{t \to \infty} \bar F_g^\beta(t) = \lim_{t \to \infty} F_g^\beta(t) = 0$.
	\item [(iii)] If there exists some $\eta \ge 0$ such that $g(t) = O(t^{-\eta})$ for $t \to \infty$ then
	 	\[
	 		\bar F_g^\beta(t) = O(t^{-\mu}) 
	 		\mbox{ and }
	 		F_g^\beta(t) = O(t^{-\mu}) 
	 		\mbox{ for } t \to \infty
		\]
	 	where $\mu = \min \left\{\alpha_1, \alpha_2, \eta \right\}.$ 
	\end{itemize}
\end{theorem}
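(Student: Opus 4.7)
The plan is to reduce all three statements to bounds on $\bar F_g^\beta$, since $|F_g^\beta(t)| \le \bar F_g^\beta(t)$ pointwise, so any estimate on $\bar F_g^\beta$ transfers automatically to $F_g^\beta$. The main inputs will be the three estimates from Lemma~\ref{dl4.2}: the decay $|\mathcal{S}^\beta(u)| \le C u^{-\nu-1}$ for large $u$ from \eqref{S1}, the growth $|\mathcal{S}^\beta(u)| \le C u^{\nu-1}$ near the origin from \eqref{S2} (which in particular gives local integrability since $\nu > 0$), and the global integrability \eqref{S3}. Part~(i) is then immediate: the change of variables $u=t-s$ yields $\bar F_g^\beta(t) \le \|g\|_\infty \int_0^t |\mathcal{S}^\beta(u)|\,du \le \|g\|_\infty \int_0^\infty |\mathcal{S}^\beta(u)|\,du$, which is finite by \eqref{S3}.

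For parts (ii) and (iii) I will split the convolution at the midpoint, writing $\bar F_g^\beta(t) = I_1(t) + I_2(t)$ with $I_1(t) = \int_0^{t/2}|\mathcal{S}^\beta(t-s)||g(s)|\,ds$ and $I_2(t) = \int_{t/2}^t|\mathcal{S}^\beta(t-s)||g(s)|\,ds$. On $I_1$ we have $t-s \ge t/2$, so after the substitution $u=t-s$ one gets $I_1(t) \le \|g\|_\infty \int_{t/2}^t|\mathcal{S}^\beta(u)|\,du$, which tends to $0$ as $t\to\infty$ because of \eqref{S3}. On $I_2$ we have $s \ge t/2$, so $I_2(t) \le \sup_{s\ge t/2}|g(s)| \cdot \int_0^\infty|\mathcal{S}^\beta(u)|\,du$. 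For part~(ii), the hypothesis $g(t)\to 0$ forces both pieces to vanish as $t\to\infty$, yielding the conclusion. The key idea is that $I_1$ uses the tail decay of $\mathcal{S}^\beta$ while $I_2$ uses the smallness of $g$ on the interval $[t/2, t]$.

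For part~(iii) I refine the same splitting with the explicit quantitative bounds $|g(s)| \le C(1+s)^{-\eta}$ and $|\mathcal{S}^\beta(u)| \le C u^{-\nu-1}$ for large $u$. The estimate on $I_2$ becomes $C(t/2)^{-\eta}\int_0^\infty|\mathcal{S}^\beta(u)|\,du = O(t^{-\eta}) = O(t^{-\mu})$ since $\mu\le\eta$. The estimate on $I_1$ is the main obstacle: using $|\mathcal{S}^\beta(t-s)| \le C(t-s)^{-\nu-1}\le C(t/2)^{-\nu-1}$ on $s\in[0,t/2]$ gives $I_1(t) \le C\, t^{-\nu-1}\int_0^{t/2}(1+s)^{-\eta}\,ds$, and the integral must then be analyzed case-by-case according to whether $\eta > 1$ (bounded integral, giving $I_1(t) = O(t^{-\nu-1}) = O(t^{-\nu})$), $\eta=1$ (logarithmic growth, giving $I_1(t) = O(t^{-\nu-1}\log t) = O(t^{-\nu})$), or $0\le\eta<1$ (growth like $t^{1-\eta}$, giving $I_1(t) = O(t^{-\nu-\eta})$). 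Since $\mu=\min\{\nu,\eta\}$, both $\nu$ and $\nu+\eta$ are at least $\mu$, so in every case $I_1(t) = O(t^{-\mu})$. Combining the bounds on $I_1$ and $I_2$ yields the claim; the main delicate point is precisely this case analysis, since one has to verify that the worst exponent produced by the integral of $(1+s)^{-\eta}$ never beats $\mu$.
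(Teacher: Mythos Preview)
Your argument is correct and somewhat cleaner than the paper's. Both proofs begin by reducing to $\bar F_g^\beta$ via $|F_g^\beta|\le\bar F_g^\beta$, but the decompositions differ. You use a single midpoint split $[0,t/2]\cup[t/2,t]$ throughout and lean heavily on the global integrability \eqref{S3}: on $[t/2,t]$ you pull out $\sup_{s\ge t/2}|g(s)|$ and bound the remaining integral by $\int_0^\infty|\mathcal S^\beta|$, while on $[0,t/2]$ you use either the tail $\int_{t/2}^t|\mathcal S^\beta|\to 0$ (part~(ii)) or the pointwise bound $|\mathcal S^\beta(t-s)|\le C(t/2)^{-\nu-1}$ together with a case analysis on $\eta$ (part~(iii)). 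The paper instead uses finer splits---three pieces at $\hat T$ and $t-1$ for part~(ii), four pieces at $1$, $t/2$, $t-1$ for part~(iii)---and works almost exclusively with the pointwise two-branch bound $|\mathcal S^\beta(u)|\le Cu^{-\nu-1}$ for $u\ge 1$ and $|\mathcal S^\beta(u)|\le Cu^{\nu-1}$ for $u\le 1$, never invoking \eqref{S3} directly. The payoff of the paper's finer partition is that each piece is immediately $O(t^{-\nu})$ or $O(t^{-\eta})$ without your trichotomy on $\eta$; the payoff of your coarser split is fewer integrals to track and a more transparent use of $L^1$-integrability of the kernel. One small point worth making explicit in your write-up: in part~(ii) you use $\|g\|_\infty<\infty$, which follows from continuity plus $g(t)\to 0$ but is not stated as a hypothesis.
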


\begin{proof} 
	We denote $\nu = \min\{\alpha_1, \alpha_2 \}$. Since, by definition, $|F_g^\beta(t)| \le \bar F_g^\beta(t)$,
	the claims for $F_g^\beta$ immediately follow from those for $\bar F_g^\beta$, and therefore it suffices
	to explicitly prove the latter.

	Statement (i) is merely the special case of $\eta = 0$ of part (iii).

	%(i) Assume that $g$ is bounded. Then, there exists a constant $M > 0$ such that $|g(t)| \leq M,\; \forall t \geq 0.$ This implies
	%\begin{align}
	%|F_g^\beta(t)| & \leq \int_0^t |\mathcal S^\beta(s)||g(t-s)|ds \nonumber \\ 
	% & \leq M\int_0^\infty |\mathcal S^\beta(s)|ds \nonumber \\
	%& \leq MC,\;\forall t \geq 0.
	%\end{align}
	
	To prove (ii), we note that $\bar F_g^\beta(t) \ge 0$ by definition. Therefore, it is sufficient to show that
	for every $\varepsilon > 0$ there exists a constant $\widetilde T = \widetilde T(\varepsilon)$ such that
	\begin{equation}
		\label{eq:t42iia}
		\bar F_g^\beta(t) \le \varepsilon \quad \mbox{ for all } t > \widetilde T.
	\end{equation}
	Since this is trivially fulfilled if $g(t) = 0$ for all $t$, we from now on assume that $g(t) \ne 0$ for some $t$, and 
	hence $\| g \|_\infty > 0$. 
	
	Our first observation is then that, from \eqref{S1} and \eqref{S2}, we know that there exists some constant 
	$C > 0$ such that
	\begin{equation}
		\label{eq:t42iib}
		|S^\beta(t)| 
			\le 
			\begin{cases}
				C t^{-\nu-1} & \mbox{ for } t \ge 1, \\
				C t^{\nu-1}  & \mbox{ for } t \le 1.
			\end{cases}
	\end{equation}
	Given an arbitrary $\varepsilon > 0$, due to our assumption on $g$ we may then find some $\hat T > 0$
	such that $|g(t)| < \nu \varepsilon / (3 C)$ for all $t > \hat T$. Using these values $\hat T$ and $C$, we then define
	\[
		\widetilde T = \hat T + \max \left \{ 1, \left( \frac{3 C \| g \|_\infty \hat T}{\varepsilon}\right)^{1/(\nu+1)} \right \}. 
	\]
	For $t > \widetilde T \ge \hat T + 1$, we can then write
	\begin{align}
		\label{31}
		\bar F_g^\beta(t) 
		& = \int_0^{\hat T} |\mathcal S^\beta(t-s)| \cdot |g(s)|ds 
			+ \! \int_{\hat T}^{t-1}  \! |\mathcal S^\beta(t-s)| \cdot |g(s)|ds 
			+ \! \int_{t-1}^{t} \! |\mathcal S^\beta(t-s)| \cdot |g(s)|ds \nonumber \\
		& = F_1(t) + F_2(t) +F_3(t).
	\end{align}
	Our goal now is to show that, under these assumptions, $F_j(t) \le \varepsilon/3$ for $j = 1, 2, 3$, which implies 
	\eqref{eq:t42iia} and thus suffices to prove part (ii) of the Theorem. In this context, we see that
	\begin{align*}
		F_1(t) 
		& = \int_0^{\hat T} |\mathcal S^\beta(t-s)| \cdot |g(s)|ds 
		\le \| g \|_\infty C \int_0^{\hat T} (t - s)^{-\nu-1} d s \\
		& \le  \| g \|_\infty C \hat T (t - \hat T)^{-\nu-1} < \frac \varepsilon 3.
	\end{align*}
	because here $t - s \ge t - \hat T > \widetilde T - \hat T \ge 1$, so that we may use the first of the bounds given in 
	\eqref{eq:t42iib}. In the penultimate step, we have bounded the integral by the product of the length of the integration
	interval and the maximum of the integrand, and in the last step, we have used the fact that $t > \widetilde T$ and the
	definition of $\widetilde T$.
	
	Furthermore,
	\begin{align*}
		F_2(t) 
		& = \int_{\hat T}^{t-1} |\mathcal S^\beta(t-s)| \cdot |g(s)|ds 
		\le \frac{\nu \varepsilon}{3 C} C \int_{\hat T}^{t-1} (t - s)^{-\nu-1} d s \\
		& = \frac{\nu \varepsilon}{3} \frac 1 \nu \left(  1 - (t - \hat T)^{-\nu} \right)
		< \frac \varepsilon 3
	\end{align*}
	because here $s \ge \hat T$, so that $|g(s)| \le \nu \varepsilon / (3 C)$, and $t-s \ge 1$, so we may once again use 
	the first bound of \eqref{eq:t42iib}.
	
	Finally, 
	\begin{align*}
		F_3(t) 
		& = \int_{t-1}^t |\mathcal S^\beta(t-s)| \cdot |g(s)|ds 
		\le \frac{\nu \varepsilon}{3 C} C \int_{t-1}^t (t - s)^{\nu-1} d s 
		= \frac{\nu \varepsilon}{3} \frac 1 \nu 
		= \frac \varepsilon 3
	\end{align*}
	where now $t$ and $s$ are such that we may invoke the second bound of \eqref{eq:t42iib} but, as in the
	previous step, $s \ge \hat T$, so that once again $|g(s)| \le \nu \varepsilon / (3 C)$.
	This completes the proof of part (ii) of the Theorem.

	For the proof of (iii), we note that \eqref{eq:t42iib} is valid in this case too. Moreover, since we are interested in the asymptotic 
	behaviour of $\bar F_g^\beta(t)$ for large $t$, we may assume without loss of generality that $t \ge 2$. Then we write
	\begin{align}\label{35}
		\bar F_g^\beta(t) 
		& = \int_0^{1}|\mathcal S^\beta(t-s)| \cdot |g(s)|ds +\int_1^{t/2} |\mathcal S^\beta(t-s)| \cdot |g(s)| ds \nonumber \\ 
		& \phantom{ = } {} \quad + \int_{t/2}^{t-1} |\mathcal S^\beta(t-s)| \cdot |g(s)|ds 
					+ \int_{t-1}^{t}|\mathcal S^\beta(t-s)| \cdot |g(s)|ds \nonumber \\
		& = F_4(t) + F_5(t) + F_6(t) + F_7(t),
	\end{align}
	and we need to show that $F_j(t) = O(t^{-\mu})$ for $j = 4, 5, 6, 7$.
	
	In this connection, we first note that, by assumption, 
	\begin{equation}
		\label{f2}
		|g(t)| \leq C' t^{-\eta} \quad \forall t \geq 1
	\end{equation}
	with some $C' > 0$, so that the upper branch of \eqref{eq:t42iib} implies
	\begin{align*}
		0 \le 
		F_4(t) 
		& \le C \| g \|_\infty \int_0^1 (t-s)^{-\nu-1} ds 
		= \frac{C  \| g \|_\infty}{\nu} \left( (t-1)^{-\nu} - t^{-\nu} \right) \\
		& <  \frac{C  \| g \|_\infty}{\nu} (t-1)^{-\nu}
		= O(t^{-\nu}) 
		= O(t^{-\mu})
	\end{align*}
	and
	\begin{align*}
		0 \le 
		F_5(t) 
		& \le C C' \int_1^{t/2} (t-s)^{-\nu-1} s^{-\eta} ds 
		\le C C' \int_1^{t/2} (t-s)^{-\nu-1} ds  \\
		& = \frac{C C'}{\nu} \left( (t-1)^{-\nu} - (t/2)^{-\nu} \right) 
		< \frac{C C'}{\nu} (t-1)^{-\nu} 
		= O(t^{-\nu}) 
		= O(t^{-\mu})
	\end{align*}
	as well as
	\begin{align*}
		0 \le 
		F_6(t) 
		& \le C C' \int_{t/2}^{t-1} (t-s)^{-\nu-1} s^{-\eta} ds 
		\le C C' \left( \frac t 2 \right)^{-\eta} \int_{t/2}^{t-1} (t-s)^{-\nu-1} ds  \\
		& = 2^\eta \frac{C C'}{\nu} t^{-\eta} \left( 1 - (t/2)^{-\nu} \right)
		< 2^\eta \frac{C C'}{\nu} t^{-\eta}
		= O(t^{-\eta}) 
		= O(t^{-\mu}).
	\end{align*}
	For the remaining part we need to invoke the second branch of \eqref{eq:t42iib} in combination with \eqref{f2}
	to derive
	\begin{align*}
		0 \le 
		F_7(t) 
		& \le C C' \int_{t-1}^t (t-s)^{\nu-1} s^{-\eta} ds 
		\le C C' (t-1)^{-\eta} \int_{t-1}^t (t-s)^{\nu-1} ds  \\
		& = \frac{C C'}{\nu} (t-1)^{-\eta} 
		= O(t^{-\eta}) 
		= O(t^{-\mu}),
	\end{align*}
	thus completing the proof.
\end{proof}

As an immediate application of Theorem \ref{dl2}(iii), we can conclude that 
\begin{equation}
	\label{eq:rm2a}
	\bar F^\beta_{g_1} (t) = O(t^{-\nu})
	\quad \mbox{ as } t \to \infty
	\qquad
	\mbox{ for } 
	g_1(t) = \min \{ 1, t^{-\nu} \}.
\end{equation}
Moreover, assuming $\nu < 1$ and setting
\[
	g_2(t) = t^{-\nu} - g_1(t) 
	= \begin{cases}
		t^{-\nu} - 1 & \mbox{ for } t \in [0,1], \\
		0                  & \mbox{ for } t > 1,
	\end{cases}
\]
we can obtain (using Lemma \ref{dl4.2} and the classical relation between the 
incomplete Beta function and the hypergeometric Function $_2 F_1$,
cf.\ \cite[eq.\ (6.6.8)]{AS}) the following bounds:
\begin{itemize}
\item If $t \ge 2$ then we have
	\begin{align}
		\nonumber
		\bar F^\beta_{g_2}(t) 
		& = \int_0^1 | \mathcal S^\beta (t-s) | (s^{-\nu}-1) ds 
		\le  C \int_0^1 (t-s)^{-\nu-1} s^{-\nu} ds 
		= C t^{-2\nu} B_{1/t} (1-\nu, -\nu) \\
		& = \frac{C}{1-\nu} t^{-\nu-1} {} {}_2 F_1(1-\nu, 1+\nu; 2-\nu; t^{-1})
		\le C' t^{-\nu-1}
		\label{eq:rm2b}
	\end{align}
	with some $C' > 0$.
\item If $t \in [1,2]$ then 
	\begin{align}
		\nonumber
		\bar F^\beta_{g_2}(t) 
		& = \int_0^1 | \mathcal S^\beta (t-s) | (s^{-\nu}-1) ds 
		\le  C \int_0^1 (t-s)^{\nu-1} s^{-\nu} ds 
		= C B_{1/t} (1-\nu, \nu) \\
		& = \frac{C}{1-\nu} t^{\nu-1} {} {}_2 F_1(1-\nu, 1-\nu; 2-\nu; t^{-1})
		\le C'' t^{-\nu}
		\label{eq:rm2c}
	\end{align}
	with some $C'' > 0$.
\end{itemize}
Since $\bar F^\beta_{g_1}(t) + \bar F^\beta_{g_2}(t) = \bar F^\beta_{g_1 + g_2}(t)$,
we can summarize the observations of eqs.\ \eqref{eq:rm2a}, \eqref{eq:rm2b} and \eqref{eq:rm2c} 
in the following way:
\begin{remark}\label{rm2}
	Assuming that $\nu = \min \{ \alpha_1, \alpha_2\} < 1$,
	there exists a constant $C$ such that, for all $t \ge 1$ and $\beta \in \left \{ \alpha_1, \alpha_2, l(\alpha) \right \}$,
	\begin{align}
		 t^\nu \int_0^t |\mathcal S^\beta(t-s)| \frac{1}{s^\nu}ds  \leq C.
	\end{align}
\end{remark}

\section{Asymptotic behaviour of solutions\newline to non-commensurate fractional planar systems}
\label{sec:mainresults}

In this section we will study the asymptotic behaviour of solutions to fractional-order linear planar systems
and the Mittag-Leffler stability of an equilibrium point to fractional nonlinear planar systems.

\subsection{Asymptotic behaviour of solutions to fractional linear planar systems}

Consider the  non-homogeneous linear two-component incommensurate fractional-order system
\begin {subequations}
\label{eq:system1}
\begin{align}\label{4.1.1}
	 ^C D^\alpha_{0^+} x(t)& = Ax(t) + f(t),\; t>0,\\
	 x(0) & = x^0 \in \mathbb R^2,
\end{align}
\end{subequations}
where $\alpha=(\alpha_1,\alpha_2)\in (0,1]\times (0,1]$ is a multi index, 
$A = \left ( a_{ij} \right ) \in \mathbb R^{2 \times 2}$ is a square real matrix and $f = (f_1, f_2)$ is a continuous vector valued function 
which is exponentially bounded on $[0,\infty)$. % Without loss of generality, we assume that $0 < \alpha_1 < \alpha_2 \leq 1$.   

\begin{theorem}\label{dl3}
	Suppose that all zeros of the characteristic function 
	$Q=s^{\alpha_1+\alpha_2}-a_{11}s^{\alpha_2}-a_{22}s^{\alpha_1}+ \det A$
	lie in the open left-half of the complex plane. Then, the following statements hold. 
	\begin{itemize}
	\item [(i)] If $f$ is bounded, then for any $x^0\in\mathbb R^2$ the solution to \eqref{eq:system1} is also bounded.
	\item [(ii)] If $\lim_{t \to \infty} f(t) = 0$ 
		then the solution to \eqref{eq:system1} tends to $0$ when $t \to \infty$ for any $x^0\in\mathbb R^2$.
	\item [(iii)] If $\| f(t) \| = O(t^{-\eta})$ as $t \to \infty$ with some $\eta> 0$
		then every solution $x$ of \eqref{4.1.1} behaves as $\| x(t) \| = O(t^{-\mu})$ for $t \to \infty$ 
		where $\mu = \min \left\{ \alpha_1, \alpha_2, \eta \right\}.$
	\end{itemize}
\end{theorem}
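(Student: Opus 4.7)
The plan is to exploit the variation of constants formula from Lemma~\ref{BTHS1} in order to reduce the theorem entirely to the estimates already collected in Lemma~\ref{dl4.2} and Theorem~\ref{dl2}. Writing the solution $\varphi(\cdot,x^0) = (\varphi_1,\varphi_2)^{\rm T}$ component-wise, each $\varphi_i(t,x^0)$ decomposes as a linear combination of
\emph{initial-data terms} of the form $\mathcal R^{\lambda}(t)\, x_j^0$ (with $\lambda\in\{0,\alpha_1,\alpha_2\}$) and \emph{forcing terms} of the form $(\mathcal S^{\beta}\ast f_j)(t)$ (with $\beta\in\{\alpha_1,\alpha_2,l(\alpha)\}$). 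The standing assumption that $Q$ has no zeros in the closed right-half plane is exactly what is required in Lemma~\ref{dl4.2} and Theorem~\ref{dl2}, so all the preparatory machinery is available.

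First I would handle the initial-data contribution. Each $\mathcal R^{\lambda}$ is continuous on $[0,\infty)$ (as the inverse Laplace transform of a function that is analytic in a half-plane and decays at infinity), and Lemma~\ref{dl4.2} gives $\mathcal R^{\lambda}(t)=O(t^{-\nu})$ as $t\to\infty$ with $\nu=\min\{\alpha_1,\alpha_2\}$. In particular, $\mathcal R^{\lambda}$ is bounded on $[0,\infty)$ and tends to $0$ at infinity. Hence the initial-data part of $\varphi_i(t,x^0)$ is bounded on $[0,\infty)$, converges to $0$ as $t\to\infty$, and more precisely is $O(t^{-\nu})$ for large $t$, regardless of $x^0\in\mathbb R^2$.

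Next I would treat the forcing-term contribution. Since $f_j$ is continuous, each convolution $\mathcal S^{\beta}\ast f_j$ is of precisely the form considered in Theorem~\ref{dl2}, and the three parts of that theorem can be invoked directly:
\begin{itemize}
\item For (i): if $f$ is bounded, then so is each $f_j$, and Theorem~\ref{dl2}(i) yields boundedness of every convolution; together with the boundedness of the initial-data terms, the solution $\varphi(\cdot,x^0)$ is bounded.
\item For (ii): if $f(t)\to 0$, then $f_j(t)\to 0$ for $j=1,2$, so Theorem~\ref{dl2}(ii) gives $(\mathcal S^{\beta}\ast f_j)(t)\to 0$; combined with $\mathcal R^{\lambda}(t)\to 0$, this forces $\varphi(t,x^0)\to 0$.
\item For (iii): if $\|f(t)\|=O(t^{-\eta})$ with $\eta>0$, then $|f_j(t)|=O(t^{-\eta})$ and Theorem~\ref{dl2}(iii) gives $(\mathcal S^{\beta}\ast f_j)(t)=O(t^{-\mu})$ with $\mu=\min\{\alpha_1,\alpha_2,\eta\}$; since $\mu\le\nu$, the $O(t^{-\nu})$ decay of the initial-data part is absorbed into the $O(t^{-\mu})$ decay of the forcing part, yielding $\|\varphi(t,x^0)\|=O(t^{-\mu})$.
\end{itemize}

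I expect essentially no serious obstacle here, since the heavy lifting (the contour estimates for $\mathcal R^{\lambda}$ and $\mathcal S^{\beta}$, and the convolution analysis) has already been done. The only point requiring mild care is checking the behaviour of $\mathcal R^{\lambda}$ near $t=0$: one must note that $\mathcal R^{\lambda}$ extends continuously to $t=0$ (indeed $\mathcal R^0(0)=1$, and the other $\mathcal R^{\lambda}$ are bounded near $0$ as inverse Laplace transforms of functions with the appropriate decay at $\infty$), so the decay estimate at infinity combined with continuity on $[0,1]$ gives a uniform bound on $[0,\infty)$. Apart from this bookkeeping detail, the proof is a direct assembly of Lemma~\ref{BTHS1}, Lemma~\ref{dl4.2}, and Theorem~\ref{dl2}.
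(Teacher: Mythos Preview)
Your proposal is correct and follows exactly the approach of the paper, which simply states that the proof is a straightforward combination of Lemma~\ref{BTHS1}, Lemma~\ref{dl4.2}, and Theorem~\ref{dl2}. You have merely spelled out the assembly that the paper compresses into a single sentence.
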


\begin{proof} 
	The proof is straightforward by combining Lemma \ref{BTHS1}, Lemma \ref{dl4.2} and Theorem \ref{dl2}. 
\end{proof}

Based on Theorem \ref{dl3} and Lemmas \ref{bd3.2}, \ref{bd3.3}, \ref{bd3.4}, \ref{bd3.5} and \ref{bd3.6}, we obtain the following corollary.

\begin{corollary}
	Let 
	\[
			q_1= \frac{\sin \frac{\alpha_1\pi}{2}}{\sin\frac{(\alpha_1+\alpha_2)\pi}{2}} 
			\quad \mbox{ and } \quad
			q_2= \frac{\sin \frac{\alpha_2\pi}{2}}{\sin\frac{(\alpha_1+\alpha_2)\pi}{2}}.
	\]
	The statements of Theorem \ref{dl3} (i), (ii) and (iii) are true if one of the following conditions is satisfied.
	\begin{itemize}
	\item[(i)] $a_{11},a_{22}\leq 0$ and $\det A>0$.
	\item[(ii)] $a_{11}=0$, $a_{22}>0$, $\det A>0$ and 
		\[ 
			(a_{22}q_1)^{\alpha_1/\alpha_2}a_{22}q_2< \det A.
		\]
	\item[(iii)] $a_{22}=0$, $a_{11}>0$, $\det A>0$ and 
		\[ (a_{11}q_2)^{\alpha_2/\alpha_1}a_{11}q_1< \det A.\]
	\item [(iv)] $a_{11}, a_{22}, \det A > 0$ and one of the following conditions holds:
		\begin{itemize}
		\item [(iv)$_1$]  $a_{11}q_2+a_{22}q_1 > 1$ and 
			$a_{11}q_2\left( (a_{11}+a_{22})q_2  \right )^{\alpha_2/\alpha_1} + a_{22}(a_{11}+a_{22})q_2^2  \leq \det A$;
		\item [(iv)$_2$] $  a_{11}q_2+a_{22}q_1 \leq 1$ and $ a_{11}q_1 + a_{22}q_2 < \det A.$
		\end{itemize} 
	\item[(v)] $a_{11} < 0$, $a_{22}, \det A > 0$  and one of the following conditions holds:
		\begin{itemize}
		\item [(v)$_1$]  $  a_{11}q_2+a_{22}q_1 > 1$ and $\left ( a_{22}q_1 \right )^{\alpha_1/\alpha_2}a_{22}q_2 \leq \det A$;
		\item [(v)$_2$] $  a_{11}q_2+a_{22}q_1 \leq 1$ and $  a_{22}q_2 \leq \det A$.
		\end{itemize} 
	\end{itemize}
\end{corollary}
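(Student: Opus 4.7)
The plan is to read this corollary as a direct combination of Theorem \ref{dl3} with the zero-location lemmas of Section \ref{ss}. Theorem \ref{dl3} already delivers conclusions (i), (ii), (iii) under the single hypothesis that every zero of $Q(s) = s^{\alpha_1+\alpha_2} - a_{11}s^{\alpha_2} - a_{22}s^{\alpha_1} + \det A$ lies in the open left half-plane. Consequently, the only work required is to verify that each of the algebraic conditions (i)--(v) of the corollary is a sufficient hypothesis for this spectral property. To bridge the two sections, I would make the explicit identification $a = a_{11}$, $b = a_{22}$, $c = \det A$, under which the function $Q$ of \eqref{eq:def-Q} coincides with the characteristic function in the corollary, and observe that the quantities $q_1, q_2$ displayed in the corollary are literally the same as those introduced just after \eqref{qh1}.

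With these identifications in place, I would dispatch the five cases one by one by invoking the appropriate lemma. Case (i), with $a_{11}, a_{22} \le 0$ and $\det A > 0$, is precisely the setting of Lemma \ref{bd3.2}. Case (ii) matches Lemma \ref{bd3.3} since $a_{11} = 0$, $a_{22} > 0$, and the inequality $(a_{22} q_1)^{\alpha_1/\alpha_2} a_{22} q_2 < \det A$ is exactly hypothesis \eqref{eq:bd3.3}. Case (iii) is the mirror situation corresponding to Lemma \ref{bd3.4} via \eqref{eq:bd3.4}. Case (iv) with its two sub-cases (iv)$_1$ and (iv)$_2$ matches exactly the two alternatives of Lemma \ref{bd3.5}; and case (v), with $a_{11} < 0$ and $a_{22}, \det A > 0$, is Lemma \ref{bd3.6} split into its two sub-cases. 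In each instance the conclusion of the relevant lemma is that $Q$ has no zero in the closed right half-plane, which is the hypothesis of Theorem \ref{dl3}. Applying that theorem then yields all three statements (i)--(iii) in the conclusion of the corollary simultaneously for every admissible right-hand side $f$ and every initial datum $x^0 \in \mathbb{R}^2$.

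Because essentially all analytical difficulty has been absorbed into Lemmas \ref{bd3.2}--\ref{bd3.6} and into Theorem \ref{dl3}, no genuine obstacle remains. The only matter requiring care is purely notational: one has to verify that the strict versus non-strict inequalities in the corollary agree exactly with those in the lemmas (in particular the strict $<$ in (ii) and (iii), the chain $aq_2 + bq_1 > 1$ versus $\le 1$ in (iv) and (v), and the non-strict $\le$ on the auxiliary inequalities in (iv)$_1$ and (v)$_1$). Once this mechanical matching is confirmed, the proof consists simply of a one-line application of the corresponding lemma in each case, followed by a single invocation of Theorem \ref{dl3} to translate the spectral conclusion into the asserted asymptotic statements.
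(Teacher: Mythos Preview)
Your proposal is correct and matches the paper's approach exactly: the paper does not even write out a proof for this corollary but simply states that it follows ``based on Theorem \ref{dl3} and Lemmas \ref{bd3.2}, \ref{bd3.3}, \ref{bd3.4}, \ref{bd3.5} and \ref{bd3.6}'', which is precisely the identification $a=a_{11}$, $b=a_{22}$, $c=\det A$ and case-by-case matching you describe.
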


\subsection{Mittag-Leffler stability of fractional nonlinear planar systems} \label{SST}

We now look at a different class of systems. Specifically, we now allow the differential equations
to contain nonlinearities, but we do require them to have the structure of an autonomous system,
i.e., we consider a fractional nonlinear planar system of the form
\begin{subequations}
\label{eq:4}
\begin{align}\label{4}
	 ^C D^\alpha_{0^+}x(t)& = Ax(t) +f(x(t)),\quad t > 0,\\
	  x(0)& = x^0 \in \Omega\subset\mathbb R^2,\label{ic4}
\end{align}
\end{subequations}
where $\alpha=(\alpha_1,\alpha_2)\in (0,1]\times (0,1]$ is a multi-index,  
$A = \left ( a_{ij} \right ) \in \mathbb R^{2 \times 2}$ is a square real matrix, 
$\Omega$ is an open subset of $\mathbb R^2$ containing the origin and 
$f:\Omega\rightarrow \mathbb R^2$ is locally Lipschitz continuous at the origin such that
$f(0)=0$ and $\lim_{r\to 0} l_f (r)=0$ with 
\[
	l_f(r) := \sup_{x,y\in B(0,r),\;x\ne y}\frac{\|f(x)-f(y)\|}{\|x-y\|}.
\]

\begin{definition}
	\label{def:mlstable}
	The trivial solution of \eqref{4} is Mittag-Leffler stable if there exist positive
	constants $\gamma,m$ and $\delta$ such that for any initial condition $x^0\in B(0,\delta)$, 
	the solution $\varphi(\cdot, x^0)$ of the initial value problem \eqref{eq:4} exists globally on the interval $[0,\infty)$ and
	\[
		\displaystyle\max\{\sup_{t\in [0,1]}\|\varphi(t, x^0)\|,\sup_{t \ge 1} t^\gamma\|\varphi(t, x^0)\|\}\le m.
	\]
\end{definition}

Our aim is to prove the following theorem.

\begin{theorem} \label{dl4.4} 
	Suppose that all zeros of the characteristic function 
	$Q(s) = s^{\alpha_1+\alpha_2}-a_{11}s^{\alpha_2}-a_{22}s^{\alpha_1}+ \det A$ 
	lie in the open left-half of the complex plane. Then, the trivial solution of differential equation \eqref{4} 
	is Mittag-Leffler stable. More precisely, there exist constants $\delta, \varepsilon > 0$  such that for 
	any $\|x^0\| < \delta$, the unique solution $\varphi(\cdot,x^0)$ of the initial value problem \eqref{eq:4} 
	exists globally on $[0,\infty)$ and $\sup_{t \geq 1} t^\nu \|\varphi(t,x^0)\| \leq \varepsilon$
	with $\nu = \min \{\alpha_1, \alpha_2\}.$
\end{theorem}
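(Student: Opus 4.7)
My plan is to realise the trajectory as a fixed point of the integral operator defined by the variation of constants formula (Lemma \ref{BTHS1}) on an appropriately weighted Banach space that encodes the Mittag-Leffler decay rate $t^{-\nu}$ directly into its norm. First note that, since $Q$ has no zero in the closed right half-plane, in particular $Q(0) = \det A > 0$; together with Lemma \ref{dl4.2} this implies that each $\mathcal R^\lambda$ is continuous on $[0,\infty)$ and $O(t^{-\nu})$ at infinity, while each $\mathcal S^\beta$ is integrable on $(0,\infty)$ and $O(t^{-\nu-1})$ at infinity.

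Introduce the weight $w(t) = \max\{1, t^\nu\}$ and the norm
\[
	\|\xi\|_* := \sup_{t\ge 0} w(t) \|\xi(t)\|
			= \max\Bigl\{ \sup_{t\in[0,1]} \|\xi(t)\|,\; \sup_{t\ge 1} t^\nu \|\xi(t)\|\Bigr\},
\]
so $(X,\|\cdot\|_*)$ with $X = \{\xi\in C([0,\infty);\mathbb R^2) : \|\xi\|_* < \infty\}$ is a Banach space, and any $\xi$ with $\|\xi\|_*\le \varepsilon$ automatically satisfies $\|\xi(t)\| \le \varepsilon \min\{1,t^{-\nu}\}$ for every $t$ and, in particular, $\xi(t)\in B(0,\varepsilon)$. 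On the closed set $X_\varepsilon := \{\xi\in X: \|\xi\|_* \le \varepsilon\}$ define the operator $\mathcal T$ by Lemma \ref{BTHS1} with $f$ replaced by $f(\xi(\cdot))$. Continuity and linearity of the homogeneous part of the formula, combined with Lemma \ref{dl4.2}, give a constant $C_0$ depending only on $A$ and $\alpha$ such that the homogeneous part $\xi^0(t) := (\mathcal R^0 - a_{22}\mathcal R^{\alpha_2}) x_1^0 + \ldots$ (together with its companion component) satisfies $\|\xi^0\|_* \le C_0 \|x^0\|$.

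For the convolution part, the assumption on $l_f$ gives $\|f(\xi(s))\| \le l_f(\varepsilon) \|\xi(s)\| \le l_f(\varepsilon)\,\|\xi\|_* \min\{1,s^{-\nu}\}$ whenever $\xi \in X_\varepsilon$. The key estimate is then that there is a constant $K$, independent of $\xi$, such that
\[
	 \sup_{t\in[0,1]} \int_0^t |\mathcal S^\beta(t-s)| \min\{1,s^{-\nu}\}\, ds
	 + \sup_{t\ge 1} t^\nu \int_0^t |\mathcal S^\beta(t-s)| \min\{1,s^{-\nu}\}\, ds \le K
\]
for every $\beta \in \{\alpha_1,\alpha_2,l(\alpha)\}$; the bound on $[0,1]$ follows from $\int_0^\infty |\mathcal S^\beta| < \infty$ (Lemma \ref{dl4.2}), while for $t\ge 1$ one splits the integral at $s=1$ and uses Remark \ref{rm2} on $[1,t]$ together with the decay $|\mathcal S^\beta(t-s)| \le C(t-s)^{-\nu-1}$ from \eqref{S1} to control $[0,1]$. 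Hence $\|\mathcal T\xi\|_* \le C_0\|x^0\| + K'\, l_f(\varepsilon)\,\varepsilon$ and, by an analogous argument for the difference, $\|\mathcal T\xi_1 - \mathcal T\xi_2\|_* \le K'\, l_f(\varepsilon)\,\|\xi_1-\xi_2\|_*$.

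We then choose $\varepsilon>0$ small enough that $K' l_f(\varepsilon) \le 1/2$ (possible since $l_f(r)\to 0$ as $r\to 0$), and take $\delta = \varepsilon/(2C_0)$. For any $x^0$ with $\|x^0\|<\delta$, $\mathcal T$ maps $X_\varepsilon$ into itself and is a strict contraction, so Banach's theorem produces a unique $\varphi \in X_\varepsilon$. Lemma \ref{BTHS1} guarantees that this fixed point is the unique global solution of \eqref{eq:4}, and $\|\varphi\|_*\le \varepsilon$ is exactly the Mittag-Leffler estimate of Definition \ref{def:mlstable} with $\gamma=\nu$. The main technical obstacle I anticipate is the mixed estimate in the third paragraph: the weight $\min\{1,s^{-\nu}\}$ introduces an integrable singularity at $s=0$ that must be absorbed by the integrable singularity of $\mathcal S^\beta$ near the origin (via \eqref{S2}), while the algebraic decay at infinity must be tracked through Remark \ref{rm2} and combined uniformly for $t$ in the transition region $[1,2]$.
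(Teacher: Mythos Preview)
Your proposal is correct and follows essentially the same route as the paper: the same weighted norm $\|\xi\|_* = \max\{\sup_{[0,1]}\|\xi\|,\,\sup_{t\ge 1}t^\nu\|\xi\|\}$, the same Lyapunov--Perron operator built from Lemma~\ref{BTHS1}, the same use of Lemma~\ref{dl4.2} and Remark~\ref{rm2} for the key convolution estimate, and the same contraction argument with $\varepsilon$ chosen so that $l_f(\varepsilon)$ is small. The only cosmetic difference is that the paper first extends $f$ to a globally Lipschitz $\hat f$ so that Theorems~\ref{DLTTDN}--\ref{DLTBCM} furnish a priori a global, exponentially bounded solution whose integral representation is then shown to lie in the ball; you instead produce the fixed point directly in $X_\varepsilon$ and invoke Lemma~\ref{BTHS1} afterwards. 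Just be aware that Lemma~\ref{BTHS1} as stated runs only in the direction ``solution $\Rightarrow$ integral form'', so you will need one extra line (either the $\hat f$ extension, or a direct check that a fixed point of the integral operator solves the Caputo system) to close the equivalence---this is routine, but should be mentioned.
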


As shown above, we see that Lemmas \ref{bd3.2}, \ref{bd3.3}, \ref{bd3.4}, \ref{bd3.5} and \ref{bd3.6} give sufficient conditions which ensure that the characteristic function $Q$ has no zero in the closed right hand side of the complex plane. Thus, by combining these lemmas and Theorem \ref{dl4.4}, we obtain the result below.

\begin{corollary}
	Let 
	\[
			q_1= \frac{\sin \frac{\alpha_1\pi}{2}}{\sin\frac{(\alpha_1+\alpha_2)\pi}{2}} 
			\quad \mbox{ and } \quad
			q_2= \frac{\sin \frac{\alpha_2\pi}{2}}{\sin\frac{(\alpha_1+\alpha_2)\pi}{2}}.
	\]
	The statement of Theorem \ref{dl4.4} is true if one of the following conditions is satisfied.
	\begin{itemize}
	\item[(i)] $a_{11},a_{22}\leq 0$ and $\det A>0$.
	\item[(ii)] $a_{11}=0$, $a_{22}>0$, $\det A>0$ and 
		\[ 
			(a_{22}q_1)^{\alpha_1/\alpha_2}a_{22}q_2< \det A.
		\]
	\item[(iii)] $a_{22}=0$, $a_{11}>0$, $\det A>0$ and 
		\[ 
			(a_{11}q_2)^{\alpha_2/\alpha_1}a_{11}q_1< \det A.
		\]
	\item [(iv)] $a_{11}, a_{22}, det A > 0$ and one of the following conditions holds:
		\begin{itemize}
		\item [(iv)$_1$]  $a_{11}q_2+a_{22}q_1 > 1$ and 
			$a_{11}q_2\left( (a_{11}+a_{22})q_2  \right )^{\alpha_2/\alpha_1} + a_{22}(a_{11}+a_{22})q_2^2  \leq \det A$;
		\item [(iv)$_2$] $  a_{11}q_2+a_{22}q_1 \leq 1$ and $ a_{11}q_1 + a_{22}q_2 < \det A.$
		\end{itemize} 
	\item[(v)] $a_{11} < 0$, $a_{22}, det A > 0$  and one of the following conditions holds:
		\begin{itemize}
		\item [(v)$_1$]  $  a_{11}q_2+a_{22}q_1 > 1$ and $\left ( a_{22}q_1 \right )^{\alpha_1/\alpha_2}a_{22}q_2 \leq \det A$;
		\item [(v)$_2$] $  a_{11}q_2+a_{22}q_1 \leq 1$ and $  a_{22}q_2 \leq \det A$.
		\end{itemize} 
	\end{itemize}
\end{corollary}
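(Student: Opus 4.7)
The plan is to observe that the Corollary is a direct consequence of Theorem \ref{dl4.4} combined with Lemmas \ref{bd3.2}--\ref{bd3.6}, and that each of the five conditions (i)--(v) is nothing more than the hypothesis of one of those lemmas re-expressed in the notation of the nonlinear planar system \eqref{4}. With the identifications $a = a_{11}$, $b = a_{22}$, and $c = \det A$ in the characteristic function $Q(s) = s^{\alpha_1+\alpha_2} - as^{\alpha_2} - bs^{\alpha_1} + c$ used throughout Section \ref{ss}, the matching is immediate, since the characteristic function appearing in Theorem \ref{dl4.4} is exactly $s^{\alpha_1+\alpha_2} - a_{11}s^{\alpha_2} - a_{22}s^{\alpha_1} + \det A$.

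Concretely, I would treat the cases in order. Condition (i) is the hypothesis of Lemma \ref{bd3.2}. Condition (ii) matches Lemma \ref{bd3.3} (the case $a=0$, $b>0$), since the numerical bound $(a_{22}q_1)^{\alpha_1/\alpha_2}a_{22}q_2 < \det A$ is exactly \eqref{eq:bd3.3} after substitution. Condition (iii) is Lemma \ref{bd3.4} (the case $b=0$, $a>0$), with the bound \eqref{eq:bd3.4} reproduced verbatim. Condition (iv), with its two sub-cases (iv)$_1$ and (iv)$_2$, is exactly Lemma \ref{bd3.5}(i) and (ii). Condition (v), with its sub-cases (v)$_1$ and (v)$_2$, reproduces Lemma \ref{bd3.6}(i) and (ii). In every case, the cited lemma concludes that every zero of $Q$ lies in the open left-half complex plane, which is precisely the standing hypothesis of Theorem \ref{dl4.4}. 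Applying that theorem yields the Mittag-Leffler stability of the trivial solution together with the quantitative decay estimate $\sup_{t\ge 1} t^\nu \|\varphi(t,x^0)\| \le \varepsilon$ for all initial data with $\|x^0\|<\delta$.

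The only point that genuinely requires care is notational rather than mathematical: one must verify that the constants $q_1$ and $q_2$ introduced at the top of the Corollary (with $\sin\frac{(\alpha_1+\alpha_2)\pi}{2}$ in the denominator) coincide with the $q_1,q_2$ employed in Lemmas \ref{bd3.3}--\ref{bd3.6}, and that these are \emph{not} the constants $\rho_1,\rho_2$ of Lemma \ref{bd3.1}, which involve $\sin\frac{(\alpha_2-\alpha_1)\pi}{2}$ instead. Once this identification is made explicit at the start of the proof, no further computation is needed, and the proof reduces to a short case-by-case citation, one line per condition, followed by a single invocation of Theorem \ref{dl4.4} to deliver the stated conclusion.
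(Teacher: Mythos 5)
Your proposal is correct and follows exactly the route the paper takes: the paper likewise deduces the corollary by identifying $a=a_{11}$, $b=a_{22}$, $c=\det A$ in the characteristic function, matching conditions (i)--(v) verbatim to the hypotheses of Lemmas \ref{bd3.2}, \ref{bd3.3}, \ref{bd3.4}, \ref{bd3.5} and \ref{bd3.6}, and then invoking Theorem \ref{dl4.4}. Your added remark distinguishing the $q_1,q_2$ of Section \ref{ss} from the $\rho_1,\rho_2$ of Lemma \ref{bd3.1} is a correct and worthwhile clarification, but it does not change the substance of the argument.
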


\begin{proof}[Proof of Theorem \ref{dl4.4}]
	From the assumption of the theorem that $f$ is locally Lipschitz continuous at the origin, we can find a constant $\varepsilon_0 > 0$ 
	such that the function $f$ is Lipschitz continuous on $B(0,\varepsilon_0)$. Denote by $\hat{f}$ a Lipschitz extension of $f$ 
	to $\mathbb R^2$. This means that $\hat{f}$ is globally Lipschitz continuous and $\hat{f}(x)=f(x)$ on $B(0,\varepsilon_0)$. 
	We now focus on the system
	\begin{subequations}
	\label{eq:5}
	\begin{align}\label{5}
		 ^C D^\alpha_{0^+}x(t) & = Ax(t) +\hat{f}(x(t)), t > 0, \\
		 x(0) &= x^0.\label{ic5}
	\end{align} 
	\end{subequations}
	Then, for any $x^0\in B(0,\varepsilon_0)$, its unique solution 
	$\hat{\varphi}(\cdot,x^0)=(\hat{\varphi}_1(\cdot,x^0),\hat{\varphi}_2(\cdot,x^0))^{\rm T}$ on $[0,\infty)$
	staifies the relationships
	\begin{subequations}
	\label{eq:x1}
	\begin{align}\label{x1}
		\hat{\varphi}_1(\cdot,x^0)
		&=  \left (\mathcal R^0(t)-a_{22}\mathcal R^{\alpha_2}(t)  \right )x_1^0 + a_{12}\mathcal R^{\alpha_1}(t)x^0_2 \nonumber \\
		&\phantom{=} \quad+ \left( ( \mathcal S^{\alpha_1} - a_{22}\mathcal S^{l(\alpha)} )
			\ast \hat{f}_1(\hat{\varphi}(\cdot,x^0))  \right )(t) 
			+ a_{12}\left (\mathcal S^{l(\alpha)} \ast \hat{f}_2 (\hat{\varphi}(\cdot,x^0)) \right )(t), \\
		\label{x2}
		\hat{\varphi}_2(t,x^0) 
		&=  a_{21}\mathcal R^{\alpha_2}(t)x^0_1 + 
			\left (\mathcal R^0(t)-a_{11}\mathcal R^{\alpha_1}(t)  \right )x_2^0 \nonumber \\
		&+a_{21}\left (\mathcal S^{l(\alpha)} \ast \hat{f}_1(\hat{\varphi}(\cdot,x^0))  \right )(t) 
			+ \left (( \mathcal S^{\alpha_2}- a_{11}\mathcal S^{l(\alpha)}) \ast \hat{f}_2(\hat{\varphi}(\cdot,x^0))  \right )(t). 
	\end{align}
	\end{subequations}
	To show the Mittag-Leffler stability of the trivial solution to the original system, 
	we will prove that for any small initial value vector, the unique solution of the system 
	\eqref{eq:5} is contained in the space $C_{\infty}([0,\infty);\mathbb R^2)$ which is equipped the norm  
	\[
		\|\xi\|_w:=\max\{ \sup_{t\in [0,1]}\|\xi(t)\|, \sup_{t\geq 1}t^\nu \|\xi(t)\| \}.
	\]
	It is easy to see that $C_w([0,\infty);\mathbb R^2):=\{\xi\in C_{\infty}([0,\infty);\mathbb R^2):\|\xi\|_w<\infty\}$ 
	is a Banach space with the norm $\|\cdot\|_w$. For $\varepsilon > 0$, let $B_{C_w}(0,\varepsilon) :=
	\{\xi\in C_{\infty}([0,\infty);\mathbb R^2):\|\xi\|_w\leq \varepsilon\}$.
	
	Based on the representation in \eqref{eq:x1}, we establish a Lyapunov--Perron type operator $\mathcal{T}_{x^0}$ 
	on the space $C_w([0,\infty);\mathbb R^2)$ as follows. For any $\xi\in C_w([0,\infty);\mathbb R^2)$, let
	\begin{align*}
		(\mathcal{T}_{x_0}\xi)_1(t)
		&:= \left (\mathcal R^0(t)-a_{22}\mathcal R^{\alpha_2}(t)  \right )x_1^0 + a_{12}\mathcal R^{\alpha_1}(t)x^0_2 \nonumber \\
		& \phantom{= } \quad + \left( ( \mathcal S^{\alpha_1} - a_{22}\mathcal S^{l(\alpha)} )\ast \hat{f}_1(\xi(\cdot))  \right )(t) 
			+ a_{12}\left (\mathcal S^{l(\alpha)}\ast\hat{f}_2 (\xi(\cdot)) \right )(t), \\
		(\mathcal{T}_{x_0}\xi)_2(t)
		&:= a_{21}\mathcal R^{\alpha_2}(t)x^0_1 + \left (\mathcal R^0(t)-a_{11}\mathcal R^{\alpha_1}(t)  \right )x_2^0 \nonumber \\
		&\phantom{= } \quad + a_{21}\left (\mathcal S^{l(\alpha)}\ast\hat{f}_1(\xi(\cdot))  \right )(t) 
			+\left (( \mathcal S^{\alpha_2}- a_{11}\mathcal S^{l(\alpha)})\ast\hat{f}_2(\xi(\cdot))\right )(t). 
	\end{align*}
	On the interval $[0,1]$, we have 
	\begin{align}
		|(\mathcal{T}_{x^0}\xi)_1(t)|
		&\leq (|\mathcal R^0(t)|+|a_{22}| \cdot |\mathcal R^{\alpha_2}(t)| )|x_1^0| 
			+ |a_{12}| \cdot |\mathcal R^{\alpha_1}(t)| \cdot |x^0_2|\nonumber \\
		&\phantom{= } \quad + l_{\hat{f}}(\|\xi\|_\infty)\|\xi\|_w \int_0^t \left(|\mathcal S^{\alpha_1}(s)|
			+ (|a_{22}|+|a_{12}|)|\mathcal S^{l(\alpha)}(s)|\right)ds\notag\\
		&\le \sup_{t\in[0,1]} \left(\left(|\mathcal R^0(t)|+|a_{22}| \cdot |\mathcal R^{\alpha_2}(t)|  \right )|x_1^0| 
			+ |a_{12}| \cdot |\mathcal R^{\alpha_1}(t)| \cdot |x^0_2| \right)\notag\\
		&\phantom{= } \quad + l_{\hat f}(\|\xi\|_\infty)\|\xi\|_w C (1+|a_{22}|+|a_{12}|)\int_0^1 \frac{1}{s^{1-\nu}}ds\notag\\
		&\leq \sup_{t\in[0,1]}\left(\left(|\mathcal R^0(t)|+|a_{22}| \cdot |\mathcal R^{\alpha_2}(t)|  \right )|x_1^0| 
			+ |a_{12}| \cdot |\mathcal R^{\alpha_1}(t)| \cdot |x^0_2|\right)\notag\\
		&\phantom{= } \quad + l_{\hat f}(\|\xi\|_\infty)\|\xi\|_w \frac{C(1+|a_{22}|+|a_{12}|)}{\nu},\label{op_1} \\
		|(\mathcal{T}_{x^0}\xi)_2(t)|
		&\leq |a_{21}| \cdot |\mathcal R^{\alpha_2}(t)| \cdot |x^0_1|
			+(|\mathcal{R}^0(t)|+|a_{11}| \cdot |\mathcal R^{\alpha_1}(t)|) |x_2^0|\nonumber \\
		&\phantom{= } \quad + l_{\hat{f}}(\|\xi\|_\infty)\|\xi\|_w \int_0^t \left(|\mathcal S^{\alpha_2}(s)|
			+ (|a_{12}|+|a_{11}|)|\mathcal S^{l(\alpha)}(s)|\right)ds\notag\\
		&\le\sup_{t\in[0,1]} \left(|a_{21}| \cdot |\mathcal R^{\alpha_2}(t)| \cdot |x^0_1|+(|\mathcal{R}^0(t)|
			+|a_{11}| \cdot |\mathcal R^{\alpha_1}(t)|)|x_2^0| \right)\notag\\
		&\phantom{= } \quad + l_{\hat f}(||\xi||_\infty)||\xi||_wC(1+|a_{21}|+|a_{11}|)\int_0^1 \frac{1}{s^{1-\nu}}ds\notag\\
		&\leq \sup_{t\in[0,1]} \left (|a_{21}| \cdot |\mathcal R^{\alpha_2}(t)| \cdot |x^0_1|
			+(|\mathcal{R}^0(t)|+|a_{11}| \cdot |\mathcal R^{\alpha_1}(t)|)|x_2^0| \right)\notag\\
		&\phantom{= } \quad + l_{\hat f}(\|\xi\|_\infty)\|\xi\|_w\frac{C(1+|a_{21}|+|a_{11}|)}{\nu}.\label{op_2}
	\end{align}
	For $t\in[1,\infty)$, then
	\begin{align}
	\notag 	t^\nu |( \mathcal{T}_{x_0}\xi)_1(t)|&\le \sup_{t\geq 1}\Big(\left(t^\nu|\mathcal R^0(t)|+|a_{22}|t^\nu|\mathcal R^{\alpha_2}(t)|  \right )|x_1^0| + |a_{12}|t^\nu|\mathcal R^{\alpha_1}(t)||x^0_2|\Big)\notag\\
	\notag &\hspace{-1cm}+l_{\hat f}(||\xi||_\infty)t^\nu\int_0^t \left(|\mathcal S^{\alpha_1}(t-s)|+ (|a_{22}|+|a_{12}|)|\mathcal S^{l(\alpha)}(t-s)|\right)s^{-\nu}s^{\nu}|\xi(s)|ds\notag\\
		\notag &\leq \sup_{t\geq 1}\Big(\left(t^\nu|\mathcal R^0(t)|+|a_{22}|t^\nu|\mathcal R^{\alpha_2}(t)|  \right )|x_1^0| + |a_{12}|t^\nu|\mathcal R^{\alpha_1}(t)||x^0_2|\Big)\notag\\
		&\hspace{-2cm} +l_{\hat f}(||\xi||_\infty)\|\xi\|_w \sup_{t\geq 1}t^\nu\int_0^t  \left(|\mathcal S^{\alpha_1}(t-s)|+ (|a_{22}|+|a_{12}|)|\mathcal S^{l(\alpha)}(t-s)|\right)s^{-\nu}ds,\label{op_12}
	\end{align}
	\begin{align}
	\notag 	t^\nu |( \mathcal{T}_{x_0}\xi)_2(t)|&\le \sup_{t\geq 1}\Big(|a_{21}|t^\nu|\mathcal R^{\alpha_2}(t)||x^0_1|+(t^\nu|\mathcal{R}^0(t)|||+|a_{11}|t^\nu|\mathcal R^{\alpha_1}(t)|)|x_2^0|\Big)\notag\\
	\notag &\hspace{-1cm}+l_{\hat f}(||\xi||_\infty)t^\nu\int_0^t \left(|\mathcal S^{\alpha_2}(t-s)|+ (|a_{21}|+|a_{11}|)|\mathcal S^{l(\alpha)}(t-s)|\right)s^{-\nu}s^\nu \|\xi(s)\|ds \notag\\
		\notag &\leq \sup_{t\geq 1}\Big(|a_{21}|t^\nu|\mathcal R^{\alpha_2}(t)||x^0_1|+(t^\nu|\mathcal{R}^0(t)|||+|a_{11}|t^\nu|\mathcal R^{\alpha_1}(t)|)|x_2^0|\Big)\notag\\
		&\hspace{-2cm} +l_{\hat f}(||\xi||_\infty)\|\xi\|_w \sup_{t\geq 1}t^\nu\int_0^t \left(|\mathcal S^{\alpha_2}(t-s)|+ (|a_{21}|+|a_{11}|)|\mathcal S^{l(\alpha)}(t-s)|\right)s^{-\nu} ds.\label{op_22}
	\end{align}
	From \eqref{op_12} and \eqref{op_22}, we obtain the estimates
	\begin{align}
		\label{op_3}	
		\|(\mathcal{T}_{x^0}\xi)_1\|_{w,1}
		&\leq (\|\mathcal{R}^0\|_{w,1}+|a_{22}| \cdot \|\mathcal{R}^{\alpha_2}\|_{w,1}
			+|a_{12}| \cdot \|\mathcal{R}^{\alpha_1}\|_{w,1})\|x^0\|\\
		&\phantom{=} \quad +l_{\hat f}(||\xi||_\infty)\|\xi\|_w \left(\frac{C}{\nu}(1+|a_{12}|+|a_{22}|)+M_{\alpha_1}
			+(|a_{22}|+|a_{12}|)M_{l(\alpha)}\right) 
		\nonumber
	\end{align}
	and
	\begin{align}
		\label{op_4}
		\|(\mathcal{T}_{x^0}\xi)_2\|_{w,1}
		&\leq (\|\mathcal{R}^0\|_{w,1}+|a_{21}| \cdot \|\mathcal{R}^{\alpha_2}\|_{w,1}
			+|a_{11}| \cdot \|\mathcal{R}^{\alpha_1}\|_{w,1})\|x^0\|\\
		&\phantom{=} \quad+l_{\hat f}(||\xi||_\infty)\|\xi\|_w \left(\frac{C}{\nu}(1+|a_{21}|+|a_{11}|)
			+M_{\alpha_2}+(|a_{21}|+|a_{11}|)M_{l(\alpha)}\right)
			\nonumber
	\end{align}
	where $\|\xi\|_{w,1}:=\max\{\sup_{t\in [0,1]}|\xi(t)|,\sup_{t\geq 1}t^\nu |\xi(t)|\}$ 
	for any $\xi\in C_{\infty}([0,\infty);\mathbb R)$, 
	$M_{\beta}=\sup_{t\geq 1} t^\nu \int_0^t |\mathcal S^{\beta}(t-s)| s^{-\nu}ds$,
	for $\beta \in \{ \alpha_1, \alpha_2,  l(\alpha) \}$, and	
	By \eqref{op_3} and \eqref{op_4}, we see that
	\begin{align}
		\notag \|(\mathcal{T}_{x^0}\xi)\|_{w}
		&\leq (2\|\mathcal{R}^0\|_{w,1}+(|a_{11}|+|a_{12}|)\|\mathcal{R}^{\alpha_1}\|_{w,1}
			+(|a_{21}|+|a_{22}| \cdot \|\mathcal{R}^{\alpha_2}\|_{w,1})\|x^0\|\\
		\notag 
		& + l_{\hat f}(||\xi||_\infty)\|\xi\|_w\left(\frac{C}{\nu}\left(2+\sum_{i,j=1}^2|a_{ij}|\right)+M_{\alpha_1}
			+M_{\alpha_2}+\sum_{i,j=1}^2|a_{ij}|M_{l(\alpha)}\right).
	\end{align}
	On the other hand, by virtue of the assumption that $\lim_{r\to 0}l(r)=0$, we can choose $\varepsilon\in (0,\varepsilon_0)$ so that
	\[
		r_0 := \left[\frac{2C}{\nu} + M_{\alpha_1}+M_{\alpha_2}
			+\sum_{i,j=1}^2 |a_{ij}| \left (M_{l(\alpha)}+\frac{C}{\nu}\right )\right]l_{\hat f}(\varepsilon)<1.
	\]
	Take $\delta=\varepsilon(1-r) / \left(2\|\mathcal{R}^0\|_{w,1}+(|a_{11}|+|a_{12}|)\|\mathcal{R}^{\alpha_1}\|_{w,1}
	+(|a_{21}|+|a_{22}|)\|\mathcal{R}^{\alpha_2}\|_{w,1} \right)$, then for any initial condition $x^0\in B(0,\delta)$, we have
	\[
		\|\mathcal{T}_{x^0}\xi\|_w\leq \varepsilon,\;\forall \xi\in B_{C_w}(0,\varepsilon),
	\]
	that is, $\mathcal{T}_{x^0}(B_{C_w}(0,\varepsilon))\subset B_{C_w}(0,\varepsilon)$. 
	Moreover, for every $\xi,\hat{\xi}\in B_{C_w}(0,\varepsilon)$, 
	\begin{align}
		\|\mathcal{T}_{x^0}\xi-\mathcal{T}_{x^0}\hat{\xi}\|_w
		& \leq \left[\frac{2C}{\nu}+M_{\alpha_1}+M_{\alpha_2}
			+ \sum_{i,j=1}^2 |a_{ij}|(M_{l(\alpha)} + \frac{C}{\nu})\right]l_{\hat f}(\varepsilon)\|\xi-\hat{\xi}\|_w\\
		&=r_0 \|\xi-\hat{\xi}\|_w.
	\end{align}
	Thus, the operator $\mathcal{T}_{x^0}$ is contractive on $B_{C_w}(0,\varepsilon)$, and by Banach's fixed point theorem, 
	$\mathcal{T}_{x^0}$ has a unique fixed point $\xi^*$ in this set. Furthermore, this function is the unique solution to the system \eqref{eq:5}
	in $B_{C_w}(0,\varepsilon)$. Notice that if $\xi^*\in B_{C_w}(0,\varepsilon)$ then $f(\xi^*(t))=\hat{f}(\xi^*(t))$ for every $t\in[0,\infty)$,
	and thus $\xi^*$ is also a solution to the system \eqref{eq:4}. This completes the proof.
\end{proof}

\section{Numerical examples}
\label{sec:examples}

To complete this paper, we now give some numerical examples to illustrate the main theoretical results.
In all the examples below, we use the functions $f_1$ and $f_2$ with
\[ 
	f_i(t) = \begin{cases}
			 1& \text{ if } 0 \leq t < 1, \\
			 \frac{1}{t^{2i}}& \text{ if } t \geq 1
		\end{cases} 
		\quad (i = 1, 2).
\]
For all cases, we have calculated numerical solutions to verify the theoretical findings. 
These solutions have been computed with Garrappa's MATLAB implementation of 
the implicit trapezoidal method described in detail in \cite{Garrappa}. This algorithm is known 
to have very favourable stability properties which makes it highly suitable for handling 
equations like ours over large intervals (which is required in this case to demonstrate 
the asymptotic behaviour). The step size has always been chosen as $h = 1/200$.

\begin{example} 
	\label{ex:1}
	Consider the inhomogeneous two-component incommensurate fractional-order linear system
	\begin{align}\label{ex1}
		\begin{cases}
			^C D^{1/3}_{0^+}x_1(t) = 0.25x_2(t) + f_1(t), \\
			^C D^{1/2}_{0^+}x_2(t) = -2x_1(t) + x_2(t)+ f_2(t), 
		\end{cases}
		\quad t>0.
	\end{align}
	In this example, the characteristic function is $Q(s) = s^{5/6} + s^{1/2} + 0.5.$  
	According to Lemma \ref{bd3.3}, all zeros of $Q$ lie in the open left-half of the complex plane. 
	Furthermore, the function $f$ satisfies the assumption stated in Theorem \ref{dl3}. 
	Hence, every solution to \eqref{ex1} tends to the origin as $t\to\infty$ with the rate $O(t^{-1/3})$. 
	This property is illustrated in Figure \ref{fig:1}. The left graph shows that the components $x_1(t)$
	and $x_2(t)$ decay to zero; the right graph visualizes the fact that $t^{1/3} x_j(t)$ tends to
	a nonzero constant for $t \to \infty$ and $j = 1, 2$, thus demonstrating that the decay
	behaviour of $x_j(t)$ is indeed $O(t^{-1/3})$.
\end{example}

\begin{figure}[h]
	\includegraphics[width=0.48\textwidth]{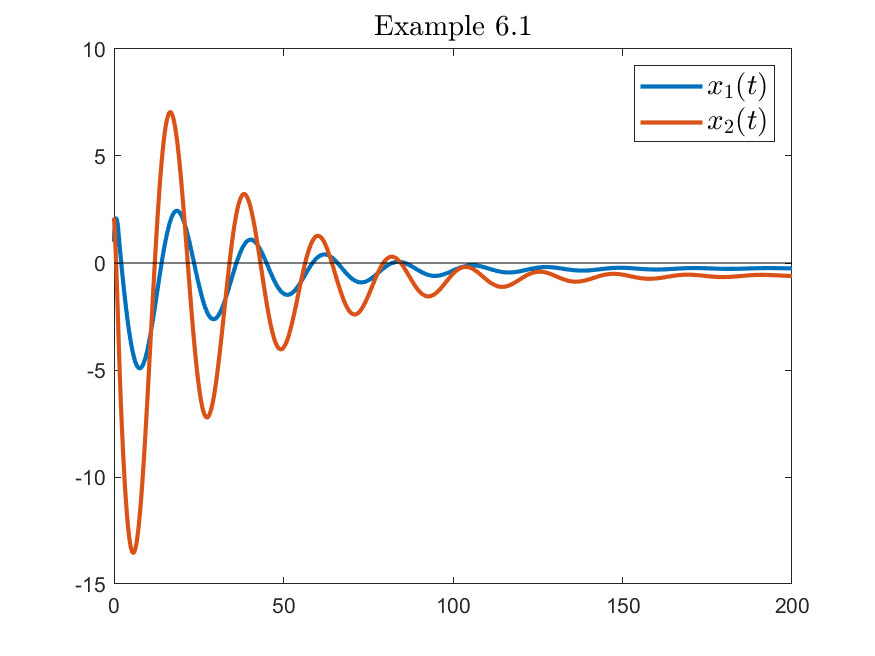}
	\hfill
	\includegraphics[width=0.48\textwidth]{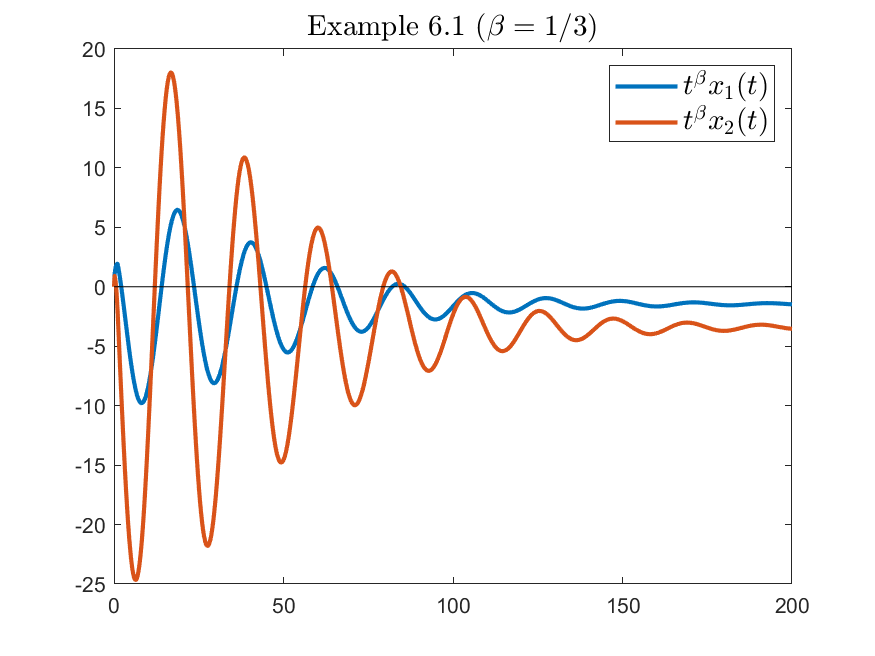}

	\caption{\label{fig:1}Solution to the differential equation \eqref{ex1} from Example \ref{ex:1} 
		with initial conditions $x_1(0) = 1$ and $x_2(0) = 2$.
		The left graph shows the components $x_j(t)$ of the solutions themselves, the right graph 
		shows the functions $t^{1/3} x_j(t)$.}
\end{figure}

\begin{example} 
	\label{ex:2}
	Consider the two-component incommensurate fractional-order nonlinear system
	\begin{align}\label{ex2}
		\begin{cases}
			^C D^{1/3}_{0^+}x_1(t) = 0.25x_2(t) + x_1^2(t)x_2^2(t), \\
			^C D^{1/2}_{0^+}x_2(t) = -2x_1(t) + x_2(t)+ x_1^2(t) + x_2^2(t),
		\end{cases}
		\quad t>0. 
	\end{align}
	It is not difficult to check that all conditions of Lemma \ref{bd3.3} and Theorem \ref{dl4.4} are verified. 
	Thus, the trivial solution to \eqref{ex2} is Mittag-Leffler stable; more precisely, by Theorem \ref{dl4.4},
	we have to expect an $O(t^{-1/3})$ decay behavior for nontrivial solutions with initial values
	sufficiently close to those of the trivial solution.
	
	Defintion \ref{def:mlstable} states that the boundedness of the solutions cannot be expected for all choices
	of the initial value any more (as had been the case in Example \ref{ex:1}) but only for initial values
	sufficiently close to $(0,0)$. Indeed we can see this behaviour in Figure \ref{fig:2} for the
	initial value $(0.1, -0.2)$, whereas Figure \ref{fig:2x} shows that this behaviour is not present
	for initial values farther away from $(0,0)$ such as, e.g., the initial value $(1,-1)$.
	In the latter case, the solutions still seem to be bounded, but the decay behaviour appears to
	be absent. If one moves the initial values even farther away from the equilibrium point,
	then one cannot even expect this boundedness any more.
\end{example}

\begin{figure}[h]
	\includegraphics[width=0.48\textwidth]{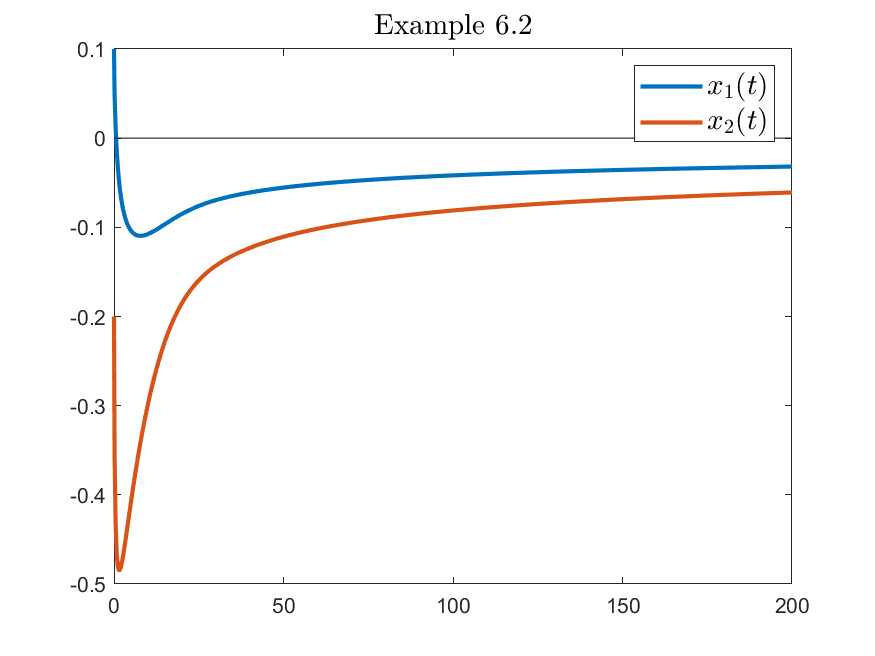}
	\hfill
	\includegraphics[width=0.48\textwidth]{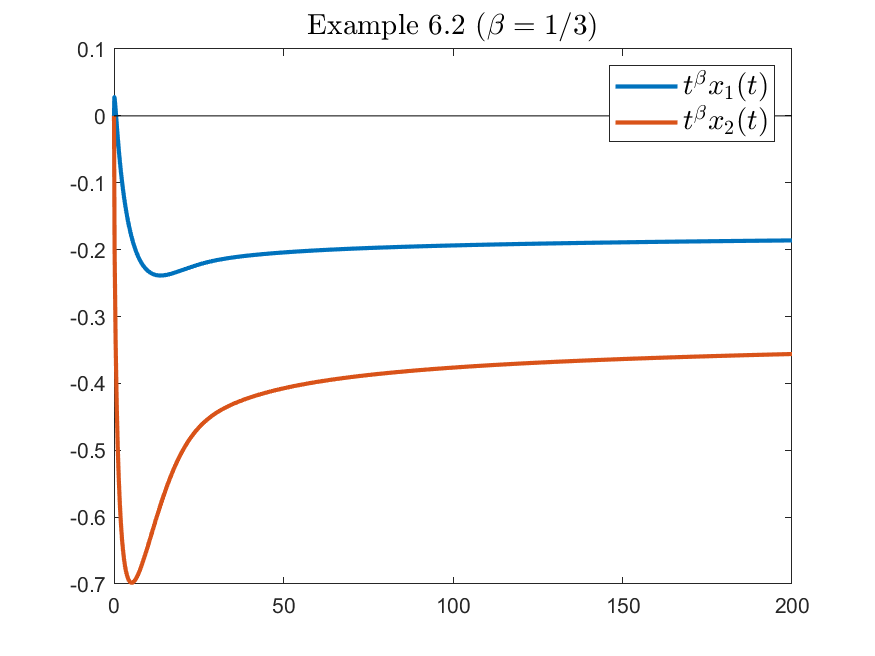}

	\caption{\label{fig:2}Solution to the differential equation \eqref{ex2} from Example \ref{ex:2} 
		with initial conditions $x_1(0) = 0.1$ and $x_2(0) = -0.2$.
		The left graph shows the components $x_j(t)$ of the solutions themselves, the right graph 
		shows the functions $t^{1/3} x_j(t)$.}
\end{figure}

\begin{figure}[h]
	\includegraphics[width=0.48\textwidth]{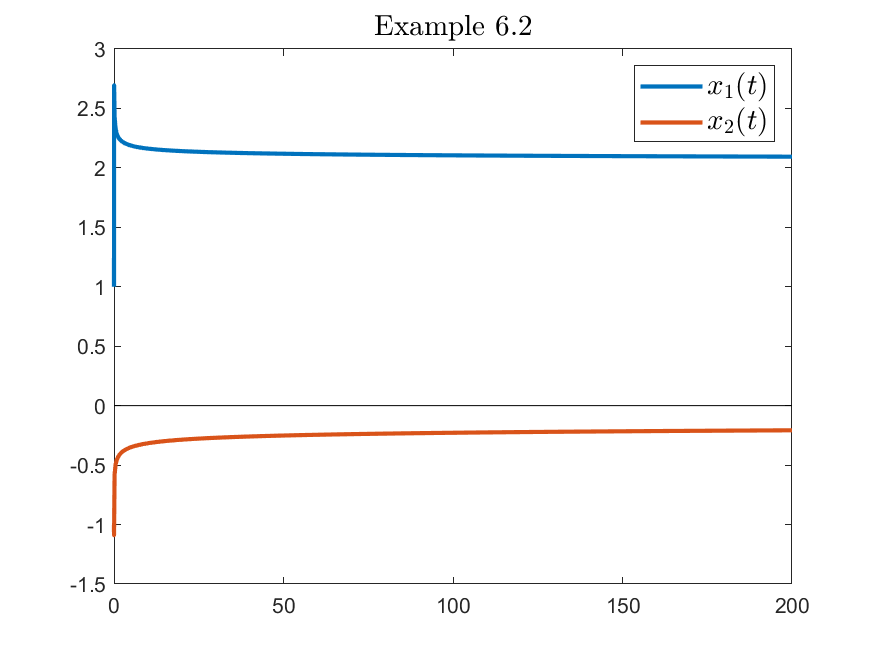}
	\hfill
	\includegraphics[width=0.48\textwidth]{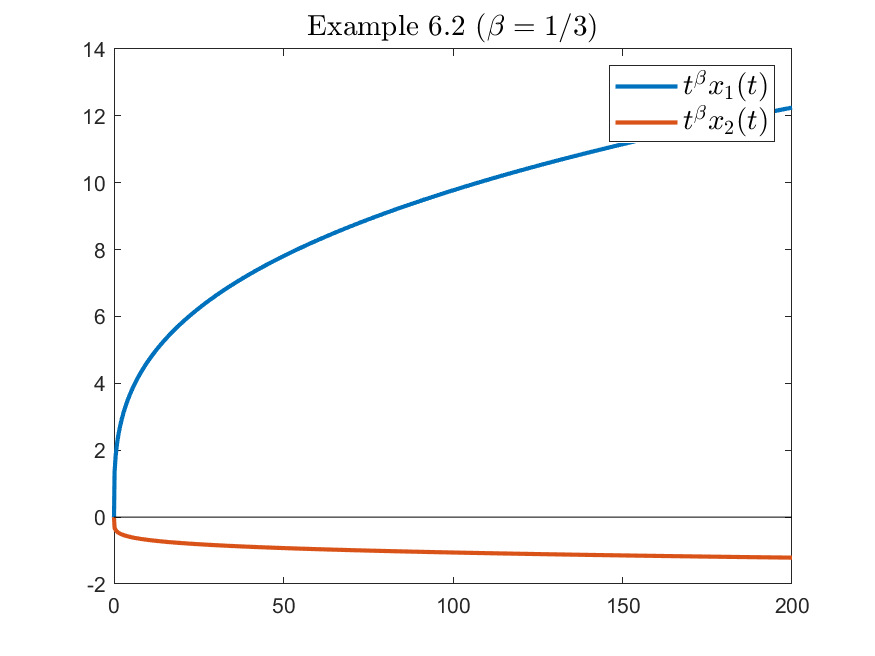}

	\caption{\label{fig:2x}Solution to the differential equation \eqref{ex2} from Example \ref{ex:2} 
		with initial conditions $x_1(0) = 1$ and $x_2(0) = -1$.
		The left graph shows the components $x_j(t)$ of the solutions themselves, the right graph 
		shows the functions $t^{1/3} x_j(t)$.}
\end{figure}

\begin{example} 
	\label{ex:3}
	Consider the fractional linear system
	\begin{align}\label{vd3}
		\begin{cases}
			^C D^{0.6}_{0^+}x_1(t) =x_1(t) + 2x_2(t) + f_1(t), \\
			^C D^{0.8}_{0^+}x_2(t) = -x_1(t) + f_2(t),
		\end{cases}
		\quad t>0. 
	\end{align}
	The characteristic function of the system is $Q(s) = s^{1.4} + s^{0.6} + 2.$  By Lemma \ref{bd3.4}, 
	all zeros of $Q$ lie in the open left-half of the complex plane and the assumptions of Theorem \ref{dl3} 
	are satisfied. Hence, every solution to this system converges to the origin as $t\to\infty$ with an $O(t^{-0.6})$
	convergence rate. 
	As in Example \ref{ex:1}, we can also reproduce this behaviour numerically. The corresponding graphs 
	are plotted in Figure \ref{fig:3}.
\end{example}

\begin{figure}[h]
	\includegraphics[width=0.48\textwidth]{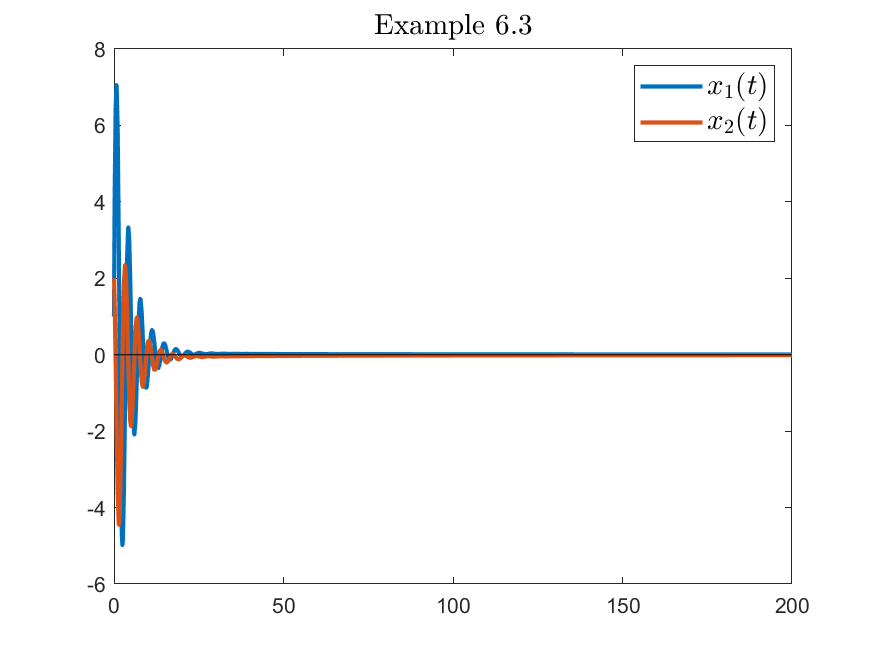}
	\hfill
	\includegraphics[width=0.48\textwidth]{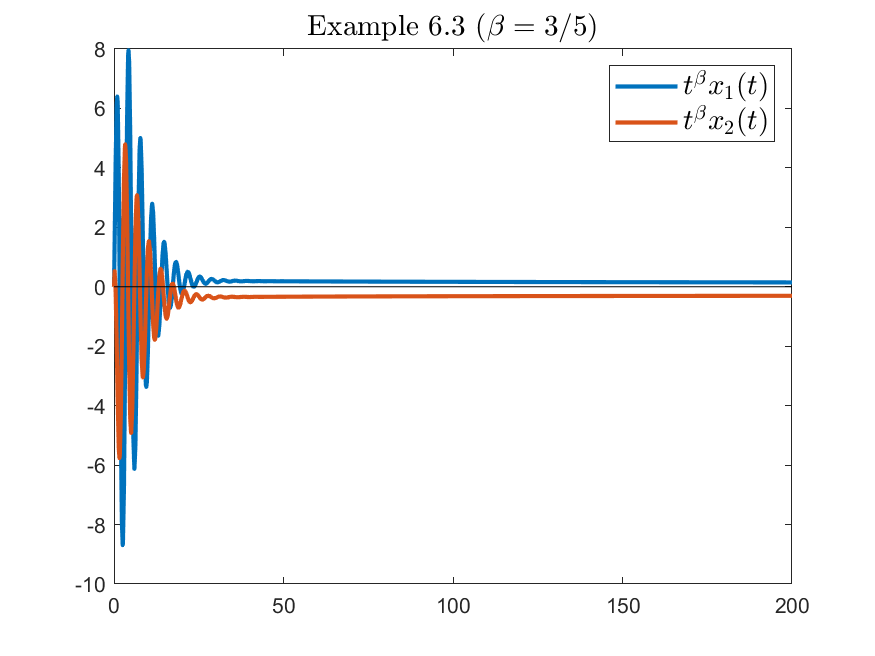}

	\caption{\label{fig:3}Solution to the differential equation \eqref{vd3} from Example \ref{ex:3} 
		with initial conditions $x_1(0) = 1$ and $x_2(0) = 2$.
		The left graph shows the components $x_j(t)$ of the solutions themselves, the right graph 
		shows the functions $t^{0.6} x_j(t)$.}
\end{figure}

\begin{example} 
	\label{ex:4}
	Consider the system
	\begin{align}\label{vd4}
		\begin{cases}
			^C D^{0.6}_{0^+}x_1(t) =x_1(t) + 2x_2(t) + x_1^2(t)x_2^2(t),\\
			^C D^{0.8}_{0^+}x_2(t) = -x_1(t) + x_1^2(t) + x_2^2(t),
		\end{cases}
		\quad t > 0.
	\end{align}
	Based on Lemma \ref{bd3.4} and Theorem \ref{dl4.4}, we see that the trivial solution of \eqref{vd4} is Mittag-Leffler stable. 
	As in Example \ref{ex:2}, this is exhibited---together with the decay behaviour predicted by Theorem \ref{dl4.4}---
	in Figure \ref{fig:4}.
\end{example}

\begin{figure}[h]
	\includegraphics[width=0.48\textwidth]{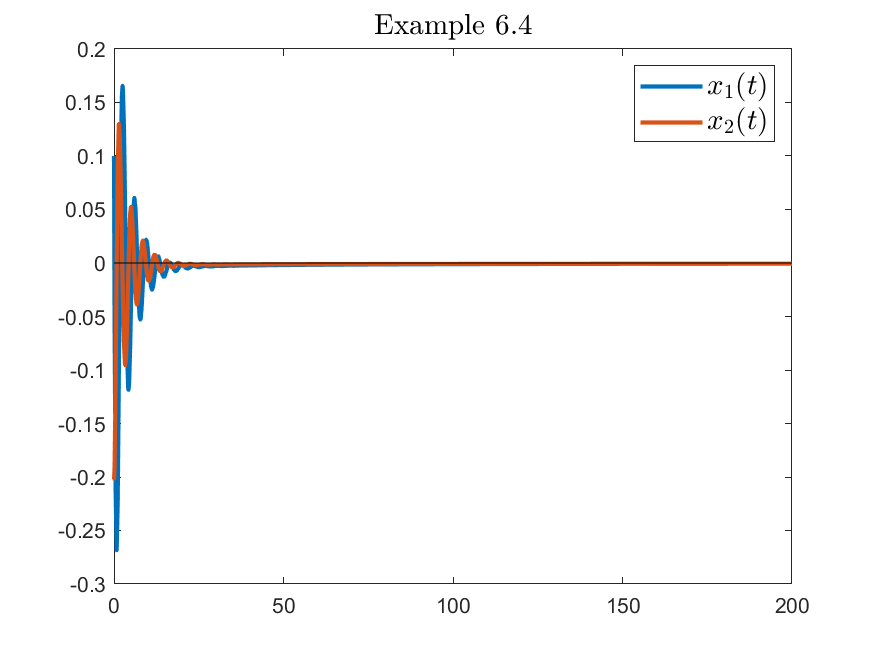}
	\hfill
	\includegraphics[width=0.48\textwidth]{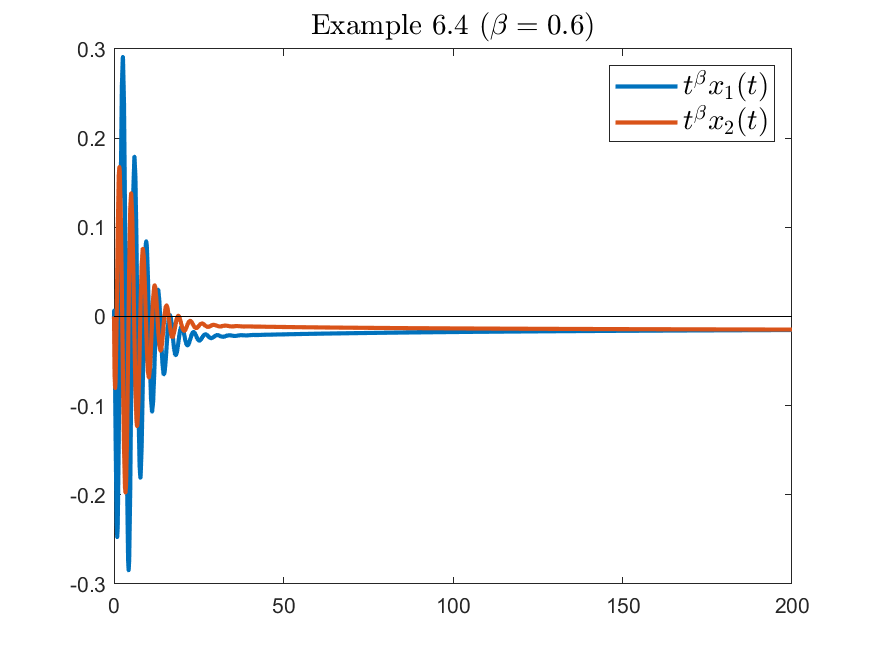}

	\caption{\label{fig:4}Solution to the differential equation \eqref{vd4} from Example \ref{ex:4} 
		with initial conditions $x_1(0) = 0.1$ and $x_2(0) = -0.2$.
		The left graph shows the components $x_j(t)$ of the solutions themselves, the right graph 
		shows the functions $t^{0.6} x_j(t)$.}
\end{figure}

\begin{example}
	\label{ex:5} 
	Consider the inhomogeneous two-component incommensurate fractional-order linear system
	\begin{align}\label{vd5}
		\begin{cases}
			^C D^{0.3}_{0^+}x_1(t) =x_1(t) - x_2(t) + f_1(t) , \\
			^C D^{0.4}_{0^+}x_2(t) = 2x_1(t) +x_2(t)+ f_2(t),
		\end{cases}
		\quad t > 0.
	\end{align}
	The system \eqref{vd5} has the characteristic function $Q(s) = s^{0.7} + s^{0.4}+s^{0.3} + 3.$  
	From Lemma \ref{bd3.5} (i) and Theorem \ref{dl3}, it follows that every solution of this system tends to 
	the origin as $t\to\infty$ as $O(t^{-0.3})$. 
	Once again, our numerical results, shown in Figure \ref{fig:5}, support this statement.
\end{example}

\begin{figure}[h]
	\includegraphics[width=0.48\textwidth]{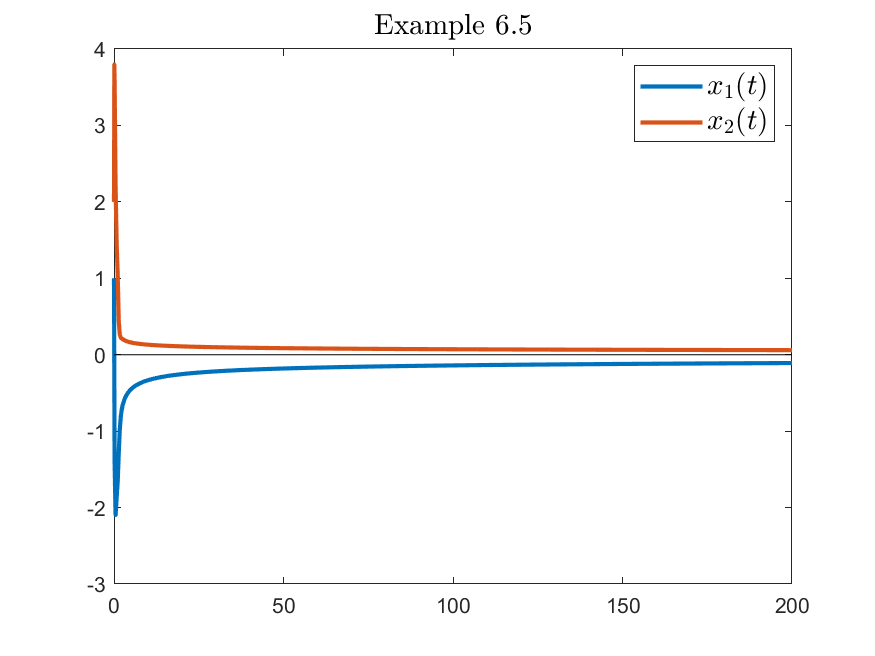}
	\hfill
	\includegraphics[width=0.48\textwidth]{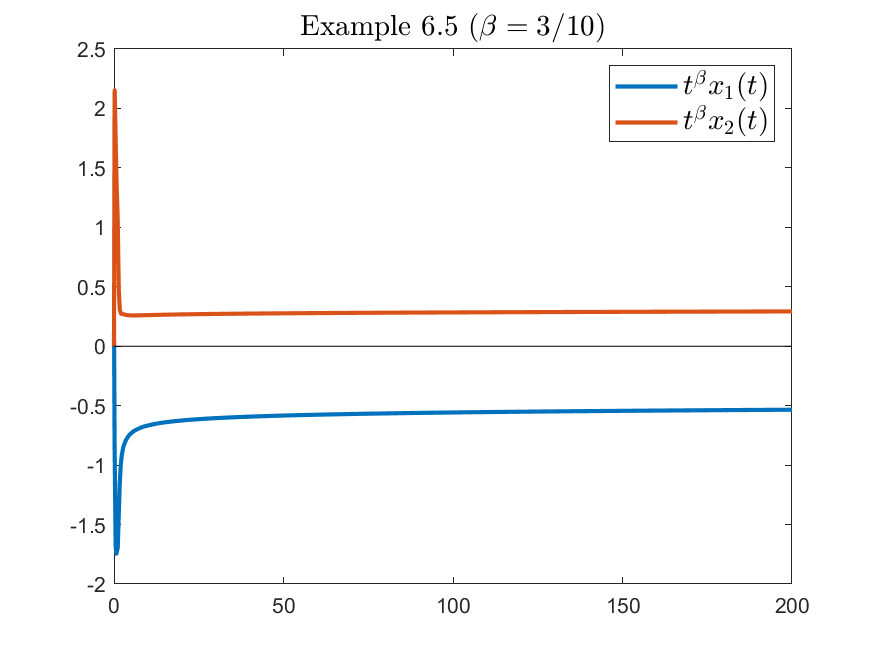}

	\caption{\label{fig:5}Solution to the differential equation \eqref{vd5} from Example \ref{ex:5} 
		with initial conditions $x_1(0) = 1$ and $x_2(0) = 2$.
		The left graph shows the components $x_j(t)$ of the solutions themselves, the right graph 
		shows the functions $t^{0.3} x_j(t)$.}
\end{figure}

\begin{example}
	\label{ex:6} 
	Consider the two-component incommensurate fractional-order nonlinear system
	\begin{align}\label{vd6}
		\begin{cases}
			^C D^{0.3}_{0^+}x_1(t) =0.1x_1(t) - 0.4x_2(t) + x_1^2(t)x_2^2(t),\\
			^C D^{0.4}_{0^+}x_2(t) = 0.7x_1(t) +0.2x_2(t) +  x_1^2(t) + x_2^2(t),
		\end{cases}
		\quad t > 0.
	\end{align}
	Its characteristic function is $Q(s) = s^{0.7} + 0.2s^{0.4}+0.1s^{0.3} + 0.3.$  
	It follows from Lemma \ref{bd3.5}(ii) and Theorem \ref{dl4.4} that the trivial solution is Mittag-Leffler stable. 
	Once again, we can visualize this observarion on the basis of numerical results,
	cf.\ Figure \ref{fig:6}.
\end{example}

\begin{figure}[h]
	\includegraphics[width=0.48\textwidth]{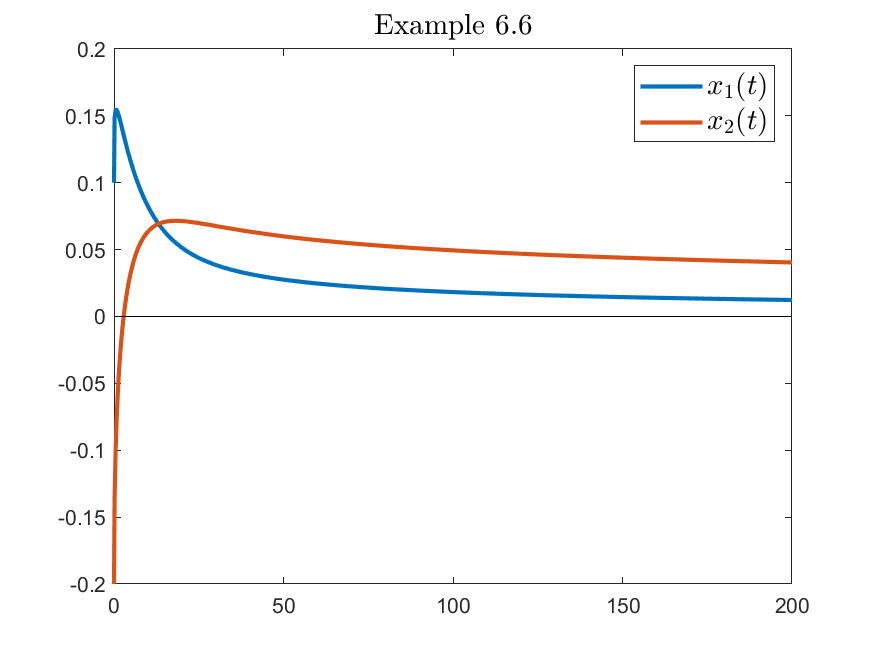}
	\hfill
	\includegraphics[width=0.48\textwidth]{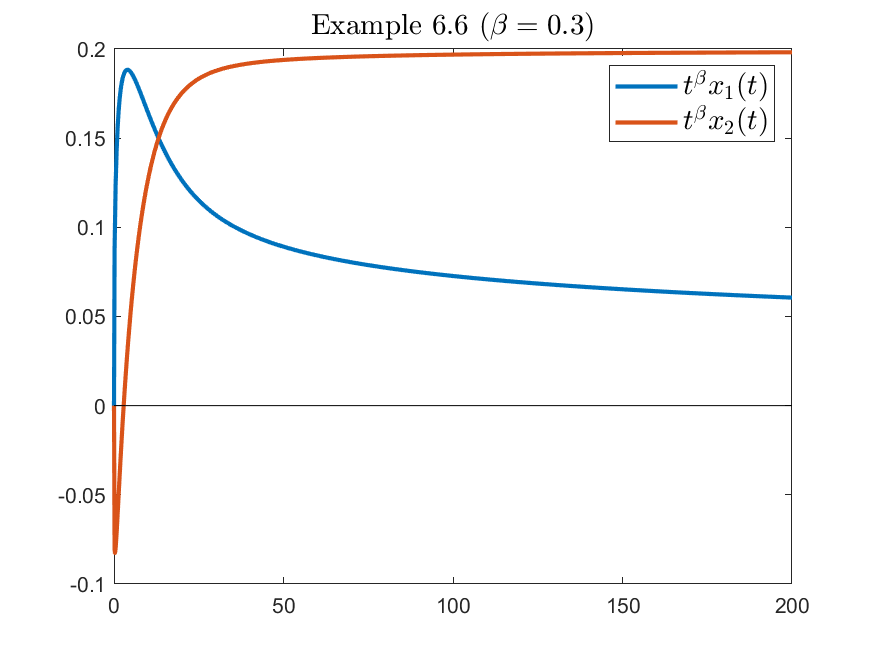}

	\caption{\label{fig:6}Solution to the differential equation \eqref{vd6} from Example \ref{ex:6} 
		with initial conditions $x_1(0) = 0.1$ and $x_2(0) = -0.2$.
		The left graph shows the components $x_j(t)$ of the solutions themselves, the right graph 
		shows the functions $t^{0.3} x_j(t)$.}
\end{figure}

\begin{example}
	\label{ex:7}
	Consider the two-component incommensurate fractional-order linear system
	\begin{align}\label{vd7}
		\begin{cases}
			^C D^{0.4}_{0^+}x_1(t) =-x_1(t) + 2x_2(t) + f_1(t),\\
			^C D^{0.5}_{0^+}x_2(t) = -5x_1(t) +4x_2(t)+ f_2(t), 
		\end{cases}
		\quad t>0.
	\end{align}
	The system \eqref{vd7} has the characteristic function $Q(s) = s^{0.9} + 4s^{0.5}-s^{0.3} + 6.$  
	According to Lemma \ref{bd3.6}(i) and Theorem \ref{dl3}, its solution converges to the origin with a rate $O(t^{-0.4})$. 
	As above, the numerical data shown in Figure \ref{fig:7} confirms this theoretical observation.
\end{example}

\begin{figure}[h]
	\includegraphics[width=0.48\textwidth]{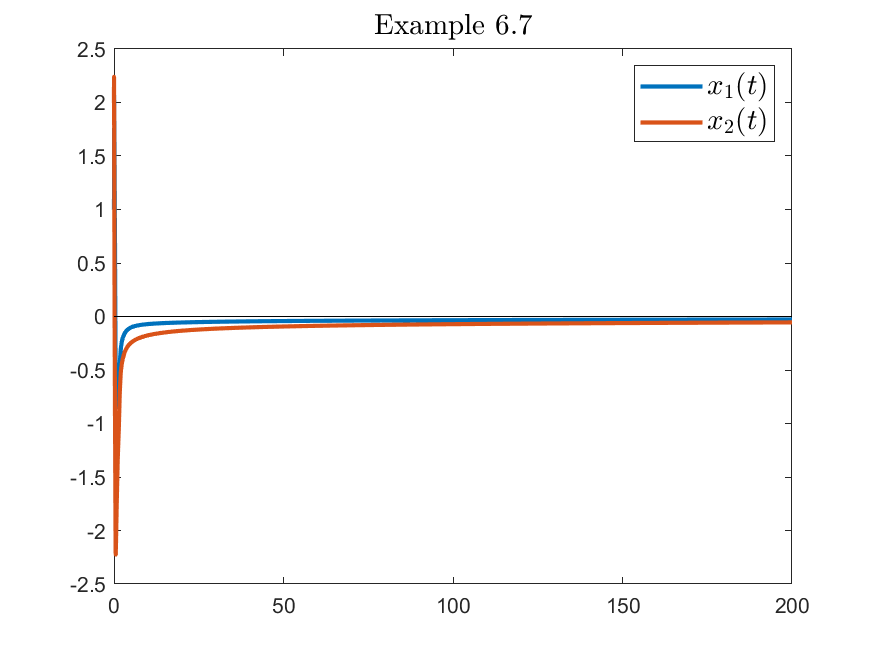}
	\hfill
	\includegraphics[width=0.48\textwidth]{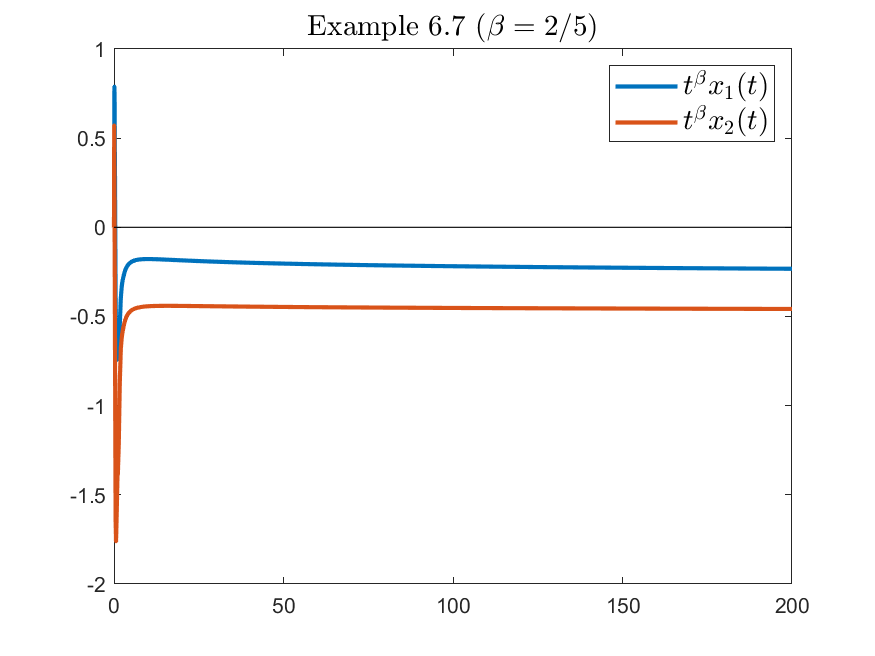}

	\caption{\label{fig:7}Solution to the differential equation \eqref{vd7} from Example \ref{ex:7} 
		with initial conditions $x_1(0) = 1$ and $x_2(0) = 2$.
		The left graph shows the components $x_j(t)$ of the solutions themselves, the right graph 
		shows the functions $t^{0.4} x_j(t)$.}
\end{figure}

\begin{example}
	\label{ex:8} 
	Consider two-component incommensurate fractional-order nonlinear system
	\begin{align}\label{vd8}
		\begin{cases}
			^C D^{0.4}_{0^+}x_1(t) =-x_1(t) - 2x_2(t) + x_1^2(t)x_2^2(t),  \\
			^C D^{0.5}_{0^+}x_2(t) = 2x_1(t) +2x_2(t) + x_1^2(t) + x_2^2(t),
		\end{cases}
		\quad t > 0.
	\end{align}
	Its characteristic function $Q(s) = s^{0.9} + 2s^{0.5}-s^{0.3} + 2$. 
	According to Lemma \ref{bd3.6}(ii) and Theorem \ref{dl4.4}, the trivial solution of \eqref{vd8} is Mittag-Leffler stable as
	illustrated graphically in Figure \ref{fig:8}.
\end{example}

\begin{figure}[h]
	\includegraphics[width=0.48\textwidth]{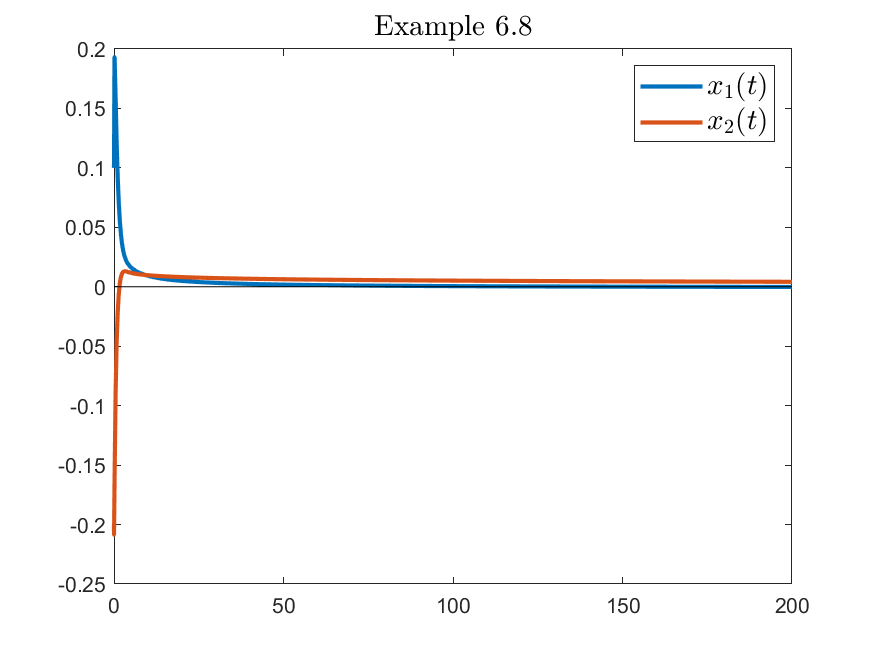}
	\hfill
	\includegraphics[width=0.48\textwidth]{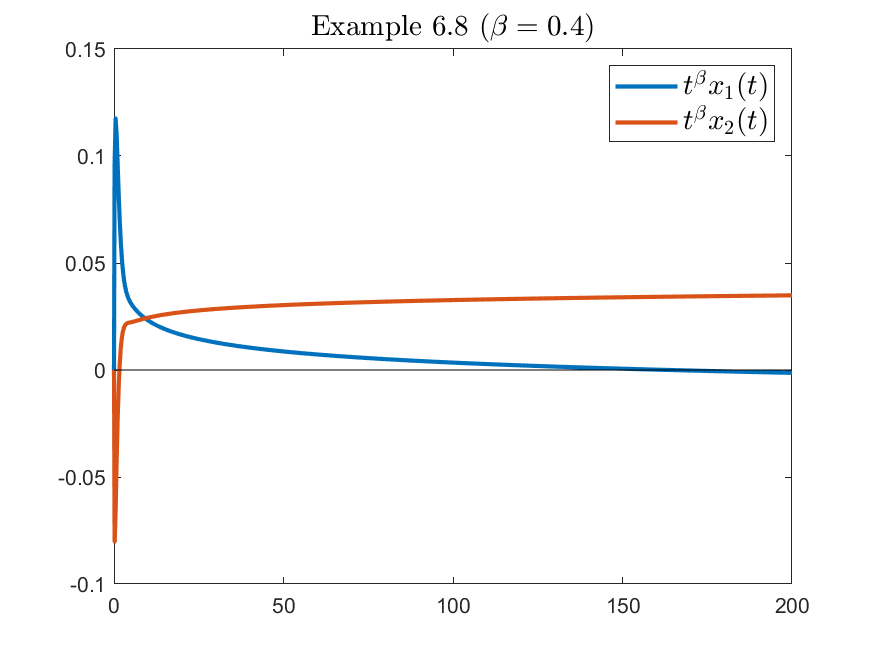}

	\caption{\label{fig:8}Solution to the differential equation \eqref{vd8} from Example \ref{ex:8} 
		with initial conditions $x_1(0) = 0.1$ and $x_2(0) = -0.2$.
		The left graph shows the components $x_j(t)$ of the solutions themselves, the right graph 
		shows the functions $t^{0.4} x_j(t)$.}
\end{figure}

\appendix

\section*{Appendix: Proof of Lemma \protect{\ref{dl4.2}}}

\begin{proof}[Proof of Lemma \ref{dl4.2}] 
	Due to the fact that there are no zeros of the characteristic function 
	$Q$ in the closed right half of the complex plane, from Lemma \ref{bd3.1}(iii), 
	we can find  $\delta > 0$ (which is small enough) such that all zeros of  $Q$ are not in the 
	domain $|\arg (s)| \leq \frac{\pi}{2} + \delta.$ Let $R > 0$ be a large enough constant such that 
	\begin{align}\label{bs2}
		|Q(s)| \geq \frac{1}{2}|s|^{\alpha_1 + \alpha_2} \quad \mbox{ whenever } |s| \geq R. 
	\end{align}
	For $\mu >0 $ and $\theta \in (0,\pi) $, we establish  an oriented contour $\gamma(\mu, \theta)$ formed
	by three segments:
	\begin{itemize}
	\item $\left \{s\in \mathbb C: |s| \geq \mu, \arg s = \theta  \right \}$,
	\item $\left \{s\in \mathbb C: |s| = \mu, |\arg s| \leq \theta  \right \}$,
	\item $\left \{s\in \mathbb C: |s| \geq \mu, \arg s = - \theta  \right \}$.
	\end{itemize}

	\noindent (i)  
	Because all zeros of $Q$ (if they exist) lie on the left of the contour $\gamma(R, \frac{\pi}{2} + \delta),$ 
	using the same argument as in \cite[Lemma 4.1]{Tuan2020FCAA}, we obtain the representation
	\begin{align}\label{7}
		\mathcal R^\lambda(t) = \frac{1}{2\pi i}\int_{\gamma(R,\frac{\pi}{2}+ \delta)} \frac{s^{l(\alpha)- \lambda-1} e^{st}}{Q(s)}ds,\; 
		t > 0,\lambda \in \left \{ 0,\alpha_1, \alpha_2 \right \}.
	\end{align}
	Choose $\varepsilon >0$ such that $Q$ has no zero in the ball $\left \{ s \in \mathbb C: |s| \leq \varepsilon \right \}.$ From \eqref{7}, we have
	\begin{align}
		\mathcal R^\lambda(t) 
		&= \frac{1}{2\pi i}\int_{\Lambda_t'} \frac{s^{l(\alpha)- \lambda-1} e^{st}}{Q(s)}ds + 
		\frac{1}{2\pi i}\int_{\gamma(\frac{\varepsilon}{t},\frac{\pi}{2}+ \delta)} 
					\frac{s^{l(\alpha)- \lambda-1} e^{st}}{Q(s)}ds, t \geq 1 \nonumber \\ 
	 	&= I_1(t)+ I_2(t),
	\end{align}
	where $\Lambda_t'$ is the clockwise oriented contour bounding the domain 
	\[
		\Omega_t :=\left \{ s \in \mathbb C: \frac{\varepsilon}{t} <|s| < R, |\arg s| < \frac{\pi}{2} + \delta \right \},
	\]
	see Figure \ref{fig:proof41a}. 
	Notice that $(s^{l(\alpha)- \lambda-1} e^{st})/Q(s)$ is analytic on 
	$\Omega_t \cup \Lambda_t'$ for all $t \geq 1$. Thus, by applying Cauchy's theorem, we obtain 
	\[ 
		I_1(t) = 0 \mbox{ for all } t \geq 1. 
	\]
	\begin{figure}[h]
		\centering
		\includegraphics[width=0.7\textwidth]{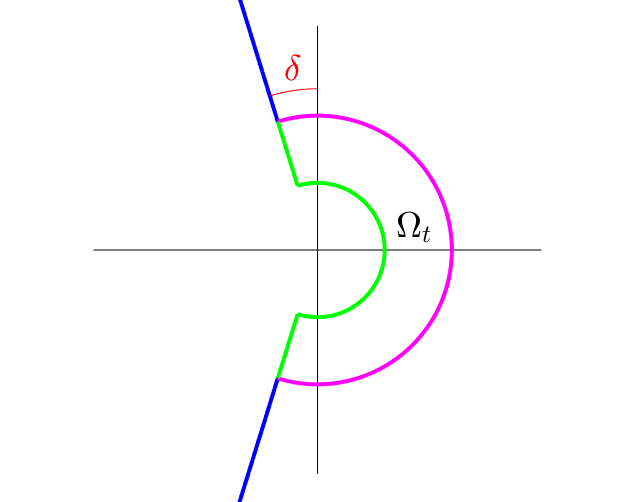}
		\caption{\label{fig:proof41a}The contours and sets used in the proof of Lemma \ref{dl4.2}: 
			The radii of the green and magenta circular arcs are $\varepsilon / t$ and $R$, respectively.
			The contour $\gamma(R, \pi/2 + \delta)$ comprises the upper blue ray, the magenta circular arc, and the
			lower blue ray and is traversed from top to bottom; 
			$\Omega_t$ is the open set bounded by the magenta and green boundary lines 
			(so these magenta and green lines together form the contour $\Lambda_t'$).
			$\Lambda_1$ comprises the upper blue ray and the upper green line; $\Lambda_2$ denotes
			the union of the lower blue ray and the lower green line, and $\Lambda_3$ is the green circular arc.
		}
	\end{figure}
	Therefore, for each $t \geq 1$, we see that 
	\begin{align}\label{10}
		\mathcal R^\lambda(t) 
		&=  \frac{1}{2\pi i}\int_{\gamma(\frac{\varepsilon}{t},\frac{\pi}{2}+ \delta)} 
						\frac{s^{l(\alpha)- \lambda-1} e^{st}}{Q(s)}ds \nonumber \\ 
		 &=  \frac{1}{2\pi i}\int_{\Lambda_1}\frac{s^{l(\alpha)- \lambda-1} e^{st}}{Q(s)}ds
		 	+ \frac{1}{2\pi i}\int_{\Lambda_2}\frac{s^{l(\alpha)- \lambda-1} e^{st}}{Q(s)}ds
		 	+ \frac{1}{2\pi i}\int_{\Lambda_3}\frac{s^{l(\alpha)- \lambda-1} e^{st}}{Q(s)}ds \nonumber  \\
		&= I_3(t)+ I_4(t)+ I_5(t)
	\end{align}
	with  
	\begin{align*}
		\Lambda_1 & := \left \{ s \in \mathbb C : |s| \geq \frac{\varepsilon}{t}, \arg s = \frac{\pi}{2} + \delta\right \}, \\ 
	 	\Lambda_2 & := \left \{ s \in \mathbb C : |s| \geq \frac{\varepsilon}{t}, \arg s = -(\frac{\pi}{2} + \delta)\right \}, \\ 
	 	\Lambda_3 & := \left \{ s \in \mathbb C : |s| = \frac{\varepsilon}{t}, |\arg s| \leq \frac{\pi}{2} + \delta\right \}. 
	\end{align*}
	Put
	\begin{align}\label{11}
		\eta:=\underset{s\in \gamma (\varepsilon , \frac{\pi}{2} + \delta)\cup B(0,\varepsilon)}{\inf} |Q(s)|. 
	\end{align} 
	For $s \in \Lambda_1,$ $s =re^{i(\frac{\pi}{2} + \delta)} = r(- \sin \delta + i\cos \delta)$ with $r \geq  \frac{\varepsilon}{t}$, and therefore
	\begin{align}
		I_3(t) = \frac{1}{2\pi i}\int_{\varepsilon/t}^{\infty}\frac{r^{l(\alpha)- \lambda - 1}
				e^{i(l(\alpha)- \lambda - 1)(\frac{\pi}{2}+\delta)}
				e^{rt (-\sin \delta + i \cos \delta)}(- \sin \delta + i\cos \delta)}{\widetilde{Q}(r)}dr.  
	\end{align}
	Here, $\widetilde{Q}(r) = Q(re^{i(\frac{\pi}{2}+ \delta)})$. 
	From \eqref{11}, we have the estimate $|\widetilde{Q}(r)| \geq \eta$ for all $r \geq \frac{\varepsilon}{t}$. 
	This implies that
	\begin{align}
		|I_3(t)| \leq  \frac{1}{2\pi \eta}\int_{\varepsilon/t}^{\infty}r^{l(\alpha)- \lambda - 1}e^{-rt\sin \delta }dr . 
	\end{align}
	By the change of variable $r = u/(t \sin \delta)$,
	\begin{align}
		\int_{\varepsilon/t}^{\infty}r^{l(\alpha)- \lambda - 1}e^{-rt\sin \delta }dr 
		& \leq \frac{1}{(t \sin\delta)^{l(\alpha)- \lambda }} \int_{0}^{\infty}u^{l(\alpha)- \lambda - 1}e^{-u}du \nonumber \\ 
		& = \frac{1}{(t \sin\delta)^{l(\alpha)- \lambda }}\Gamma(l(\alpha)- \lambda) \nonumber\\
		& \leq \frac{C_{1,1}}{t^{l(\alpha)- \lambda}}.
	\end{align}
	Hence,
	\begin{align} \label{12}
		 |I_3(t)| \leq \frac{C_{1,1}}{t^{l(\alpha)-\lambda}}.
	\end{align}
	Similarly, there is a $C_{1,2}>0$ such that
	\begin{align} \label{13}
		 |I_4(t)| \leq \frac{C_{1,2}}{t^{l(\alpha)-\lambda }}.  
	\end{align} 
	 For $s \in \Lambda_3,$ $s = (\varepsilon / t) e^{i\varphi}$ with $|\varphi| \leq \frac{\pi}{2} + \delta$, and so
	\begin{align}
		I_5(t) 
		& = - \frac{1}{2 \pi i}\int_{-( \frac{\pi}{2} + \delta)}^{ \frac{\pi}{2} + \delta} 
			\frac{\left (\varepsilon / t  \right)^{l(\alpha)-\lambda -1} e^{i\varphi (l(\alpha)-\lambda -1) } 
				e^{\varepsilon (\cos \varphi+ i \sin \varphi)} i (\varepsilon / t) e^{i\varphi}}{\hat Q(\varphi)} d\varphi 
	\end{align}
	where $\hat Q(\varphi) = Q(\frac{\varepsilon}{t}e^{i\varphi})$. From \eqref{11}, we know that
	$|\hat Q(\varphi)| \geq \eta$ for all $\varphi \in [-(\frac{\pi}{2}+ \delta),\frac{\pi}{2}+ \delta]$. 
	Thus
	\begin{align}\label{14}
		|I_5(t)| 
		& \leq \frac{1}{2 \pi \eta} \left(\frac{\varepsilon}{t}  \right)^{l(\alpha)-\lambda } 
				\int_{-( \frac{\pi}{2} + \delta)}^{ \frac{\pi}{2} + \delta}  e^{\varepsilon \cos \varphi} d\varphi  
		 \leq \frac{C_{1,3}}{t^{l(\alpha) - \lambda}}.
	\end{align}
	From \eqref{10}, \eqref{12}, \eqref{13} and \eqref{14}, we obtain 
	\begin{align}
		 |\mathcal R^\lambda (t)|
		 & \leq \frac{C}{t^{l(\alpha)- \lambda}}
		  \leq \frac{C}{t^\nu}
	\end{align}
	for all $t \ge 1$ and all $\lambda \in \left \{ 0,\alpha_1, \alpha_2 \right \}$, with $C:=C_{1,1}+C_{1,2}+C_{1,3}$.

	\noindent (ii) For the proof of the seocnd statement, we first look at the case $\beta \in \left \{ \alpha_1, \alpha_2 \right \}$.
	Here, we apply the arguments as in the proof of the part (i) above to obtain  
	\begin{align*}
		\mathcal S^\beta(t) 
		&= \frac{1}{2\pi i}\int_{\gamma(\frac{\varepsilon}{t},\frac{\pi}{2}+ \delta)} \frac{s^{l(\alpha)- \beta} e^{st}}{Q(s)}ds \nonumber \\
		&=  \frac{1}{2\pi i}\int_{\Lambda_1}\frac{s^{l(\alpha)- \beta} e^{st}}{Q(s)}ds
			+ \frac{1}{2\pi i}\int_{\Lambda_2}\frac{s^{l(\alpha)- \beta} e^{st}}{Q(s)}ds
			+ \frac{1}{2\pi i}\int_{\Lambda_3}\frac{s^{l(\alpha)- \beta} e^{st}}{Q(s)}ds  
	\end{align*} 
	with each $t \geq 1$.
	In the same way as above, we can find a constant $C_{2,1}$ so that the estimate
	\begin{align}
		|\mathcal S^\beta (t)| \leq \frac{C_{2,1}}{t^{l(\alpha)- \beta + 1}}
	\end{align}
	holds for all $t \ge 1$ and all $ \beta \in \left \{ \alpha_1, \alpha_2 \right \}$.
	Clearly, $t^{l(\alpha)- \beta + 1 } \geq t^{\nu + 1}$ for all $t \geq 1$ and
	all $\beta \in \left \{ \alpha_1, \alpha_2  \right \}$. Thus,
	\begin{align}  
		|\mathcal S^\beta (t)| \leq \frac{C_{2,1}}{t^{\nu + 1}},\; \beta \in \left \{ \alpha_1, \alpha_2 \right \} ,\;  t \geq 1.
	\end{align}
	Next, we consider the remaining case $\beta = l(\alpha)$. For $t \geq 1,$ we see
	\begin{align} \label{15}
		\mathcal S^{l(\alpha)}(t) 
		&= \frac{1}{2\pi i}\int_{\gamma(\frac{\varepsilon}{t},\frac{\pi}{2}+ \delta)} \frac{ e^{st}}{Q(s)}ds \nonumber \\
		&= \frac{1}{2\pi i}\int_{\gamma(\frac{\varepsilon}{t},\frac{\pi}{2}+ \delta)} \frac{1}{\det A}e^{st}ds
			 -  \frac{1}{2\pi i}\int_{\gamma(\frac{\varepsilon}{t},\frac{\pi}{2}+ \delta)}
			 	\frac{(s^{\alpha_1 + \alpha_2} -a_{11}s^{\alpha_2}-a_{22}s^{\alpha_1})e^{st}}{(\det A)Q(s)} ds
			 	\nonumber \\
		&= I_6(t) + I_7(t)
	\end{align}
By using the same estimates as in the proof of the part (i) above, there exist constants $C_{2,2}, C_{2,3}$ and $C_{2,4}$ such that
	\begin{align}\label{bs1}
		|I_7(t)| 
		\leq \frac{C_{2,2}}{t^{l(\alpha) + 1}} +  \frac{C_{2,3}}{t^{\alpha_1 + 1}} +  \frac{C_{2,4}}{t^{\alpha_2 + 1}} 
		\leq  \frac{\sum_{2\leq i\leq 4}C_{2,i}}{t^{\nu + 1}},\; \forall t \geq 1.
	\end{align}
	On the other hand, by the change of variable $s = \frac{u^{1/\mu}}{t}$ with some $\mu \in (0,1)$, we find
	\begin{align}
		I_6(t) 
		&= \frac{1}{2\pi\mu i}\frac{1}{t\det A}\int_{\gamma (\varepsilon^\mu, \mu (\frac{\pi}{2}+ \delta))}
			e^{u^{1/\mu}} u^{(1- \mu)/\mu} du
		=\frac{1}{t\det A}\frac{1}{\Gamma(0)}=0,
	\end{align}
	the last equality being deduced from \cite[eq.~(1.52)]{Podlubny}. 
	Combining \eqref{15} and \eqref{bs1}, for each $t \geq 1$, we conclude
	\begin{align} 
		| \mathcal S^{l(\alpha)}(t)| \leq \frac{C}{t^{\nu +1}}.
	\end{align} 

	\noindent(iii) For each $t \in (0,1)$ and $\beta \in \left \{ \alpha_1,\alpha_2,l(\alpha) \right \}$, we have 
	\begin{align}
		\mathcal S^\beta(t) 
		&=  \frac{1}{2\pi i}\int_{\gamma(R,\frac{\pi}{2}+ \delta)} \frac{s^{l(\alpha)- \beta} e^{st}}{Q(s)}ds \nonumber \\ 
		&=\frac{1}{2\pi i}\int_{\Psi_t }  \frac{s^{l(\alpha)- \beta} e^{st}}{Q(s)}ds 
			+ \frac{1}{2\pi i}\int_{\gamma(\frac{R}{t},\frac{\pi}{2}+ \delta)} \frac{s^{l(\alpha)- \beta} e^{st}}{Q(s)}ds \nonumber \\
		&=I_8(t) + I_9(t) , 
	\end{align}
	where $\Psi _t$ is the boundary of the domain
	$U_t :=\left \{ s \in \mathbb C :  R <|s|  < R / t, |\arg s| < \frac{\pi}{2} + \delta \right \}$. 
	Since $s^{l(\alpha)- \beta} e^{st} / Q(s)$ is analytic on $U_t \cup \Psi_t$ for $t \in (0,1)$, by applying Cauchy's theorem, we obtain
	$I_8(t) = 0$ for all $t  \in (0,1)$.
	Thus,
	\begin{align}\label{20}
		\mathcal S^\beta(t) 
		&=  \frac{1}{2\pi i}\int_{\gamma(\frac{R}{t},\frac{\pi}{2}+ \delta)} \frac{s^{l(\alpha)- \beta} e^{st}}{Q(s)}ds \nonumber \\ 
	 	&= \frac{1}{2\pi i}\int_{\Psi_1}\frac{s^{l(\alpha)- \beta} e^{st}}{Q(s)}ds 
	 		+  \frac{1}{2\pi i}\int_{\Psi_2}\frac{s^{l(\alpha)- \beta} e^{st}}{Q(s)}ds 
	 		+  \frac{1}{2\pi i}\int_{\Psi_3}\frac{s^{l(\alpha)- \beta} e^{st}}{Q(s)}ds \nonumber \\
		& = I_{10}(t) +I_{11}(t) +I_{12}(t)
	\end{align}
	where
	\begin{align*}
		\Psi_1 & := \left \{ s \in \mathbb C : |s| \geq \frac{R}{t}, \arg s = \frac{\pi}{2} + \delta\right \}, \\ 
		\Psi_2 & := \left \{ s \in \mathbb C : |s| \geq \frac{R}{t}, \arg s = -(\frac{\pi}{2} + \delta)\right \}, \\ 
		\Psi_3 & := \left \{ s \in \mathbb C : |s| = \frac{R}{t}, |\arg s| \leq \frac{\pi}{2} + \delta\right \}. 
	\end{align*}
	For $s \in \Psi_1,$  $s =re^{i(\frac{\pi}{2} + \delta)} = r(- \sin \delta + i\cos \delta)$ with $r \geq R/t$, and so
	\begin{align}
		 I_{10}(t) 
		 = \frac{1}{2\pi i}\int_{R/t}^{\infty}
		 	\frac{r^{l(\alpha)- \beta}e^{i(l(\alpha)-\beta)(\frac{\pi}{2} + \delta)}e^{rt(-\sin \delta + i\cos \delta)}}{\widetilde Q (r)} 
		 	(-\sin \delta + i\cos \delta) dr
	\end{align}
	where $\widetilde Q (r) = Q(re^{i(\frac{\pi}{2} + \delta)})$. From \eqref{bs2}, we have the estimate 
	\begin{align}
		|\widetilde Q (r)| 
		= |Q(re^{i(\frac{\pi}{2} + \delta)})| \geq \frac{1}{2}|re^{i(\frac{\pi}{2} + \delta)}|^{\alpha_1 + \alpha_2} 
		= \frac{1}{2}r^{\alpha_1 + \alpha_2},\; \forall r \geq \frac{R}{t}. 
	\end{align}
	This implies 
	\begin{align} \label{17}
		|I_{10}(t)| 
		\leq  \frac{1}{\pi }\int_{R/t}^{\infty}\frac{r^{l(\alpha)- \beta}e^{-rt\sin \delta }}{r^{l(\alpha)}} dr 
		 \leq \frac{1}{\pi t \sin \delta}\int_{R/t}^{\infty}\frac{1}{r^{\beta +1}}dr 
		% \leq \frac{1}{\pi t \sin \delta} \frac{- r^{-\beta}}{\beta} \Big|_{r = R/t}^{r = \infty}  
		= \frac{C_{3,1}}{t^{1-\beta}}.
	\end{align}
	The second inequality here is obtained by applying the relation $e^{-x } \leq 1/x$ for $x > 0$.
	Similarly,
	\begin{align} \label{18}
		|I_{11}(t)| 
		&\leq \frac{C_{3,2}}{t^{1-\beta}}. 
	\end{align}
	For $s \in \Psi_3$, $s = (R/t) e^{i\varphi}$ with $|\varphi| \leq \frac{\pi}{2} + \delta$, thus
	\begin{align}
		I_{12}(t) 
		= \frac{1}{2\pi i}\int_{-(\frac{\pi}{2}+ \delta)}^{\frac{\pi}{2}+ \delta} \left( \frac{R}{t} \right)^{l(\alpha) - \beta + 1}
			\frac{e^{i(l(\alpha)- \beta)\varphi}e^{R(\cos \varphi + i \sin \varphi)}}{\hat Q(\varphi)}ie^{i\varphi}d\varphi ,
	\end{align}
	where $\hat Q(\varphi) = Q(\frac{R}{t}e^{i\varphi})$. From \eqref{bs2}, we have 
	\begin{align}
		|\hat Q (\varphi)| 
		= \left| Q \left( \frac{R}{t} e^{i\varphi} \right) \right | 
		\geq \frac{1}{2} \left | \frac{R}{t} e^{i\varphi} \right |^{\alpha_1 + \alpha_2} 
		= \frac{1}{2}\left (\frac{R}{t}  \right )^{\alpha_1 + \alpha_2},\; \forall  \varphi\in [-(\frac{\pi}{2} + \delta),\frac{\pi}{2} + \delta],
	\end{align}
	and thus
	\begin{align}\label{19}
		|I_{12}(t)| 
		&\leq \frac{1}{2\pi} \int_{-(\frac{\pi}{2}+ \delta)}^{\frac{\pi}{2}+ \delta} \left( \frac{R}{t} \right)^{l(\alpha) - \beta + 1}
			\frac{|e^{i(l(\alpha)- \beta)\varphi}| \cdot |e^{R(\cos \varphi + i \sin \varphi)}|}{|\hat Q(\varphi)|}|ie^{i\varphi}|d\varphi 
			\nonumber\\
		&\leq \frac{R^{1-\beta}}{t^{1-\beta}}\frac{1}{\pi}\int_{-(\frac{\pi}{2}+ \delta)}^{\frac{\pi}{2}+ \delta}e^{R\cos \varphi}d\varphi 
		 \leq  \frac{C_{3,3}}{t^{1-\beta}}.
	\end{align}
	From \eqref{20}, \eqref{17}, \eqref{18} and  \eqref{19}, we obtain 
	\begin{align}
		|\mathcal S^{\beta}(t)| \leq \frac{C}{t^{1-\beta}}, \forall t \in (0,1), \beta \in \left \{ \alpha_1,\alpha_2,l(\alpha) \right \}.
	\end{align}

	Finally, \eqref{S3} is an immediate consequence of \eqref{S1} and \eqref{S2}.
\end{proof}

\end{document}